\numberwithin{equation}{section}
\theoremstyle{plain}
\newtheorem{theorem}{Theorem}
\theoremstyle{plain}
\newtheorem{lemma}{Lemma}[section]
\theoremstyle{plain}
\newtheorem*{op1}{Open Problem}
\theoremstyle{remark}
\newtheorem*{remark}{Remark}
\theoremstyle{remark}
\newtheorem*{remarka}{Remark on the value of $\alpha$}
\theoremstyle{remark}
\newtheorem*{remarkb}{Remark on the value of $\beta$}
\theoremstyle{definition}
\newtheorem{example}{Example}[section]
\theoremstyle{definition}
\newtheorem*{part1}{Part 1}
\theoremstyle{definition}
\newtheorem*{part2}{Part 2}
\theoremstyle{plain}
\newtheorem*{pa}{Property A}
\theoremstyle{plain}
\newtheorem*{pb}{Property B}
\def\bfp{\mathbf{p}}
\def\bfs{\mathbf{s}}
\def\bft{\mathbf{t}}
\def\bfv{\mathbf{v}}
\def\bfw{\mathbf{w}}
\def\bfx{\mathbf{x}}
\def\dd{\mathrm{d}}
\def\ee{\mathrm{e}}
\def\ii{\mathrm{i}}
\def\eps{\varepsilon}
\def\Qq{\mathbb{Q}}
\def\Rr{\mathbb{R}}
\def\Zz{\mathbb{Z}}
\def\AAA{\mathcal{A}}
\def\BBB{\mathcal{B}}
\def\CCC{\mathcal{C}}
\def\HHH{\mathcal{H}}
\def\III{\mathcal{I}}
\def\LLL{\mathcal{L}}
\def\MMM{\mathcal{M}}
\def\PPP{\mathcal{P}}
\def\SSS{\mathcal{S}}
\def\WWW{\mathcal{W}}
\def\XXX{\mathcal{X}}
\def\YYY{\mathcal{Y}}
\def\frakd{\mathfrak{d}}
\def\frakI{\mathfrak{I}}
\def\frakJ{\mathfrak{J}}
\def\frakM{\mathfrak{M}}
\def\frakR{\mathfrak{R}}
\def\BBBB{\mathscr{B}}
\def\NNNN{\mathscr{N}}
\DeclareMathOperator{\Circle}{circle}
\DeclareMathOperator{\Disk}{disk}
\DeclareMathOperator{\bad}{bad}
\DeclareMathOperator{\violator}{violator}
\DeclareMathOperator{\cs}{cs}
\renewcommand{\le}{\leqslant}
\renewcommand{\ge}{\geqslant}
\title[Uniformity of geodesic flow]
{Uniformity of geodesic flow\\
in non-integrable 3-manifolds}
\author[Beck]{J. Beck}
\address{Department of Mathematics, Hill Center for the Mathematical Sciences, Rutgers University, Piscataway NJ 08854, USA}
\email{jbeck@math.rutgers.edu}
\author[Chen]{W.W.L. Chen}
\address{School of Mathematical and Physical Sciences, Faculty of Science and Engineering, Macquarie University, Sydney NSW 2109, Australia}
\email{william.chen@mq.edu.au}
\author[Yang]{Y. Yang}
\address{School of Science, Beijing University of Posts and Telecommunications, Beijing 100876, China}
\email{yangyx@bupt.edu.cn}
\begin{document}

\maketitle

\thispagestyle{empty}

%
%

\section{Introduction and the splitting method}\label{sec1}

Very little is known about non-integrable dynamical systems in dimensions greater than~$2$.
A perfect illustration of this sad fact is the following open problem which is the $3$-dimensional non-integrable analogue
of the Kronecker--Weyl equidistribution theorem concerning geodesic flow on the unit torus $[0,1)^3$.

A vector $\bfv=(v_1,v_2,v_3)\in\Rr^3$ is called a \textit{Kronecker direction} in $3$-space if the coordinates $v_1,v_2,v_3$
are linearly independent over~$\Qq$.

\begin{op1}
Suppose that $\MMM$ is a finite polycube translation $3$-manifold.
Is it true that any half-infinite geodesic in $\MMM$ with a Kronecker direction and which does not hit a singular point of $\MMM$ is uniformly distributed?
\end{op1}

The answer to the corresponding $2$-dimensional problem is given by the Gutkin--Veech theorem,
but the method does not seem to work in higher dimensions.
In $3$ dimensions, even the simpler problem of density is wide open.

Here we introduce a new geometric method which enables us to solve the Open Problem
for infinitely many finite polycube translation $3$-manifolds which, in the first instance,
satisfy a very special condition which makes them
\textit{essentially integrable} in one of the $3$ directions.
We then extend our argument in Sections~\ref{sec8}--\ref{sec9}
to include more finite polycube translation $3$-manifolds.

We call our new approach the \textit{splitting method}.
It uses the Birkhoff ergodic theorem, and leads to time-qualitative uniformity.
It is an open problem whether time-quantitative uniformity can be established in these situations.

To illustrate our method and ideas, we consider in the first instance the L-solid translation $3$-manifold,
the simplest non-integrable example in $3$-dimensions.
This $3$-manifold consists of $3$ atomic cubes arranged in the shape of the letter~L, with a \textit{top cube}, a \textit{middle cube}
and a \textit{right cube}, together with face identification given by perpendicular translation.
The picture on the right in Figure~1.1 shows the $Y$-faces which are perpendicular to the $y$-axis.
The two faces $Y_2$ are identified with each other, and the two faces $Y_3$ are identified with each other.

\begin{displaymath}
\begin{array}{c}
\includegraphics[scale=0.8]{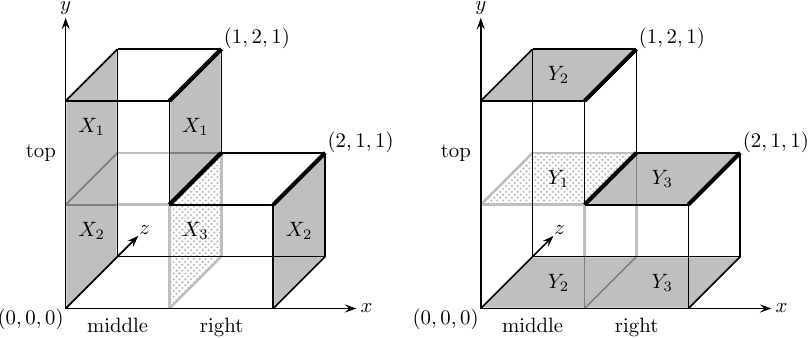}
\vspace{3pt}\\
\mbox{Figure 1.1: the $X$-faces and $Y$-faces of the L-solid translation $3$-manifold}
\end{array}
\end{displaymath}

\begin{theorem}\label{thm1}
Any half-infinite geodesic in the L-solid translation $3$-manifold with a Kronecker direction $\bfv\in\Rr^3$
is uniformly distributed unless it hits a singularity.
\end{theorem}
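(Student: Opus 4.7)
The strategy is to exploit the product structure of the L-solid, apply the $2$-dimensional Gutkin--Veech theorem to the genuinely non-integrable $2$D factor, and use the \emph{splitting method} to combine this with equidistribution in the third direction.

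The first observation is that the face identifications of the L-solid factor through a product decomposition $\MMM = L \times S^1$, where $L$ is the $2$-dimensional L-translation surface (the L-tromino with opposite outer edges identified by perpendicular translation; a genus-$2$ square-tiled surface with a single cone point $*$ of angle $6\pi$), and $S^1 = \Rr/\Zz$ corresponds to the $y$-direction. This reflects the claim that the L-solid is essentially integrable in the $y$-direction. The singular set of $\MMM$ is precisely the circle $\{*\}\times S^1$. Under this decomposition, the geodesic in direction $\bfv = (v_1,v_2,v_3)$ factors into the $2$D geodesic $p(t)$ on $L$ with direction $(v_1,v_3)$ and the linear motion $y(t) = y_0 + v_2 t$ on $S^1$, and it avoids the $3$D singular set exactly when $p(t)$ avoids the cone point $*$.

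Since $\bfv$ is Kronecker, in particular $v_1/v_3 \notin \Qq$, so the Gutkin--Veech theorem applied to the Veech surface $L$ yields that $p(t)$ is uniformly distributed on $L$. It remains to lift this to uniform distribution on the full product $L \times S^1$. By Weyl's criterion together with a Fourier expansion of a test function in the $y$-variable, it suffices to prove that for every nonzero integer $n$ and every continuous $g\colon L\to\Cc$,
\begin{equation*}
\lim_{T\to\infty}\frac{1}{T}\int_0^T g(p(t))\,e^{2\pi\ii n v_2 t}\,\dd t = 0.
\end{equation*}
This is where the \emph{splitting method} enters: partition $[0,T]$ into blocks of length $\Delta = \Delta(T)$ on which the oscillation $e^{2\pi\ii n v_2 t}$ makes many (nearly) complete turns --- contributing cancellation --- while $g(p(t))$ stays close to its spatial mean $\int_L g$ by the Birkhoff/Gutkin--Veech ergodic theorem on $L$. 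Choosing $\Delta(T)\to\infty$ slowly enough that both effects cooperate yields the required vanishing.

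The main obstacle is the quantitative coordination of the two factors. When $v_2$ is irrational the oscillation averages cleanly. However, the Kronecker hypothesis only provides $\Qq$-linear independence of $\{v_1,v_2,v_3\}$ as a whole, so $v_2$ may happen to be rational (e.g.\ $(\sqrt{2},1,\sqrt{3})$); then the oscillation has a discrete sampling structure, and one must use the full joint $\Qq$-linear independence to rule out resonances between the sampled return map on $L$ and the $(v_1,v_3)$-flow. Moreover, the Gutkin--Veech convergence on $L$ is only time-qualitative (no explicit rate), so the block length $\Delta(T)$ must be tuned delicately to avoid either over- or under-shooting. This interplay is the technical heart of the splitting method, and is why the conclusion is only \emph{time-qualitative} uniformity, as already anticipated in the introduction.
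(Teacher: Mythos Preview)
Your proposal has a genuine gap, and it also misreads both the geometry and the paper's method.

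First, a factual slip: the L-solid is integrable in the $z$-direction, not the $y$-direction (see the Remark after Figure~1.2). The L-shape lies in the $xy$-plane, so $\MMM=L_{xy}\times[0,1)_z$; the flow factors as direction $(v_1,v_2)$ on the L-surface and speed $v_3$ on the circle. More importantly, what you call the ``splitting method'' bears no relation to the paper's. The paper's method is purely geometric: it tracks how small rectangles on the $Y$-faces are split along the singular edges of $\MMM$ by the return map $T_{\alpha,\beta}$, and combines a Lebesgue-density-type lemma (Lemma~\ref{lem12}) with the combinatorics of the resulting splitting permutation to force all pieces to share a dominant colour, contradicting the multiplicity constraint~\eqref{eq1.9}. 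The proof of Theorem~\ref{thm1} uses only the elementary ergodicity of the $(\alpha,\beta)$-shift on the $2$-torus; Gutkin--Veech is not invoked.

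Second, and this is the real gap, your block argument does not prove the vanishing of the twisted averages
\[
\frac{1}{T}\int_0^T g(p(t))\,e^{2\pi\ii n v_3 t}\,\dd t.
\]
Your heuristic (``$g(p(t))$ is near its mean on long blocks while the oscillation cancels'') would, if valid, prove this for \emph{every} nonzero frequency, i.e.\ that the $L$-flow is weakly mixing. But it is not: the continuous functions $p\mapsto e^{2\pi\ii(mx(p)+ky(p))}$ (coordinates taken $\bmod\ 1$) are eigenfunctions with eigenvalues $2\pi(mv_1+kv_2)$, inherited from the cover $L\to[0,1)^2$; taking $g=e^{-2\pi\ii x}$ and frequency $v_1$ gives a twisted average equal to $g(p(0))\ne0$. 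So your block argument proves something false. What you would actually need is the spectral statement that these torus eigenvalues are the \emph{only} eigenvalues of the $L$-flow; then the Kronecker relation $nv_3\ne mv_1+kv_2$ excludes resonance. That statement is nontrivial and you do not address it, nor do you explain why absence of resonance upgrades from ergodicity to \emph{unique} ergodicity of the product (the circle factor alone is not uniquely ergodic when $v_3\in\Qq$, a case the Kronecker hypothesis does not exclude).
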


Suppose that $\bfv=(v_1,v_2,v_3)\in\Rr^3$ is a Kronecker direction in $3$-space.
Linear independence guarantees that the coordinates $v_1,v_2,v_3$ are non-zero.
It is convenient to denote the Kronecker direction instead by the parallel vector $\bfv=(\alpha,1,\beta)$,
where $\alpha=v_1/v_2$, $\beta=v_3/v_2$ and $\alpha/\beta$ are all irrational.
For simplicity, we assume that $\alpha>0$ and $\beta>0$.
We can further assume that $\alpha<1$.
If $\alpha>1$, then we simply interchange the roles of $x$ and~$y$.

We consider the \textit{discretization} of the $\bfv$-flow in the L-solid translation $3$-manifold relative to the $Y$-faces,
where the flow hits the faces $Y_1,Y_2,Y_3$.
Let
\begin{equation}\label{eq1.1}
T_{\alpha,\beta}:\YYY=Y_1\cup Y_2\cup Y_3\to\YYY
\end{equation}
denote the relevant discrete transformation defined by consecutive hitting points.

Under our assumptions on the parameters $\alpha$ and~$\beta$, the $\bfv$-flow encounters split singularities in the form
of edges of the L-solid translation $3$-manifold, highlighted in Figure~1.1 in bold.
For instance, consider a geodesic segment in the $\bfv$-direction that hits the common edge of the faces $Y_1$ and~$Y_3$.
If we move this geodesic segment marginally to the left, then it crosses the face~$Y_1$, quickly hits the right face~$X_1$
and then jumps to the left face~$X_1$.
If we move this geodesic segment marginally to the right, then it crosses the face~$X_3$, quickly hits the top face~$Y_3$
and then jumps to the bottom face~$Y_3$.

Consider now the image under $T_{\alpha,\beta}^{-1}$ of the singular edge of the L-solid translation $3$-manifold which is the
common edge of the faces $Y_1$ and ~$Y_3$.
It is a line segment in the $z$-direction on the face~$Y_2$, as shown in Figure~1.2.

\begin{displaymath}
\begin{array}{c}
\includegraphics[scale=0.8]{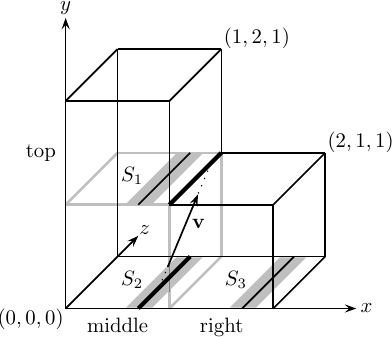}
\vspace{3pt}\\
\mbox{Figure 1.2: a strip round the inverse image of a singular edge}
\end{array}
\end{displaymath}

Consider a narrow strip neighbourhood $S_2$ of this line segment, shown in Figure~1.2 as the gray strip on the face~$Y_2$.
The image of this strip under $T_{\alpha,\beta}$ splits along the singular edge.
Our splitting method concerns making use of such splits.

\begin{remark}
Note that the front $Z$-faces on the plane $z=0$ and the corresponding back $Z$-faces on the plane $z=1$ are pairwise identified,
and none of the edges of these faces is a singular edge, so the L-solid translation $3$-manifold is essentially integrable in the
$z$-direction.
\end{remark}

The discretization $T_{\alpha,\beta}$ on $\YYY$ defined by \eqref{eq1.1} is area preserving.
Our first goal is to prove that it is also \textit{ergodic} on~$\YYY$.
Suppose, on the contrary, that it is not ergodic.
Then there exist two disjoint $T_{\alpha,\beta}$-invariant subsets $\WWW$ (white) and $\SSS$ (silver) of $\YYY$
such that $\YYY=\WWW\cup\SSS$ and
\textcolor{white}{xxxxxxxxxxxxxxxxxxxxxxxxxxxxxx}
\begin{equation}\label{eq1.2}
0<\lambda_2(\WWW)\le\lambda_2(\SSS)<3,
\end{equation}
where $\lambda_2$ denotes the $2$-dimensional Lebesgue measure.
We show that this leads to a contradiction.

Consider the projection of $\YYY$ to the unit torus $[0,1)^2$, given by
\begin{equation}\label{eq1.3}
\YYY\to[0,1)^2:(x,y,z)\mapsto(\{x\},\{z\}),
\end{equation}
where we take the fractional parts of the first and the third coordinates and omit reference to the second coordinate.
Then for every point $P\in[0,1)^2$, there are precisely $3$ distinct points $P_1,P_2,P_3\in\YYY$ which have projection image~$P$.
Let
\begin{equation}\label{eq1.4}
f_\WWW(P)=\vert\{P_1,P_2,P_3\}\cap\WWW\vert
\quad\mbox{and}\quad
f_\SSS(P)=\vert\{P_1,P_2,P_3\}\cap\SSS\vert
\end{equation}
denote the number of these $3$ points that fall into $\WWW$ and $\SSS$ respectively.
We refer to $f_\WWW$ and $f_\SSS$ as the multiplicity functions of the invariant sets $\WWW$ and $\SSS$ respectively,
and they are non-negative integer valued functions defined on the unit torus $[0,1)^2$ that satisfy the condition
\begin{equation}\label{eq1.5}
f_\WWW(P)+f_\SSS(P)=3
\quad
\mbox{for almost all $P\in[0,1)^2$}.
\end{equation}
We also consider the corresponding projection of
\begin{displaymath}
T_{\alpha,\beta}:\YYY\to\YYY
\quad\mbox{to}\quad
T_0:[0,1)^2\to[0,1)^2,
\end{displaymath}
which is simply the $(\alpha,\beta)$-shift on the unit torus $[0,1)^2$.

\begin{remark}
Strictly speaking, $\WWW$ and $\SSS$ are subsets of~$\YYY$.
However, it is convenient to view the pair as a $2$-colouring of the set $\YYY$ with colours $\WWW$ and~$\SSS$,
and not make any distinction between subset and colour.
\end{remark}

Since $\bfv$ is a Kronecker direction, it is not difficult to show that $T_0$ is ergodic.
Indeed, one way is to derive it from the Kronecker--Weyl equidistribution theorem on the unit torus.
Here we give a simpler proof by using the more elementary Kronecker density theorem which is valid in every dimension $d\ge1$.
However, we restrict our discussion to the case $d=2$.

Let $\bfw=(\alpha,\beta)\in\Rr^2$, where $\alpha,\beta$ are linearly independent over~$\Qq$,
and let $T_0$ be the $\bfw$-shift on the unit torus $[0,1)^2$.
If $T_0$ is not ergodic, then there is a $\bfw$-shift-invariant measurable subset $A\subset[0,1)^2$ such that
\begin{equation}\label{eq1.6}
0<\lambda_2(A)<1
\quad\mbox{and}\quad
A+\bfw=A,
\end{equation}
where $A+\bfw$ denotes the image of $A$ under~$T_0$.

\begin{lemma}\label{lem11}
Suppose that the set $A\subset[0,1)^2$ is measurable.
Then
\begin{equation}\label{eq1.7}
\int_{[0,1)^2}\lambda_2(A\cap(A+\bft))\,\dd\bft=(\lambda_2(A))^2.
\end{equation}
\end{lemma}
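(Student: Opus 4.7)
The plan is to reduce the identity to a standard Fubini/translation-invariance computation via indicator functions. Write $\chi_A$ for the characteristic function of $A$, viewed as a function on the torus $\Tt^2=[0,1)^2$. Since translation on the torus is modulo $1$, we have $\chi_{A+\bft}(\bfx)=\chi_A(\bfx-\bft)$ for a.e.\ $\bfx$, so that
\begin{equation*}
\lambda_2(A\cap(A+\bft))=\int_{[0,1)^2}\chi_A(\bfx)\chi_A(\bfx-\bft)\,\dd\bfx.
\end{equation*}

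Next, I would substitute this into the left-hand side of \eqref{eq1.7}. The integrand
\begin{equation*}
(\bfx,\bft)\mapsto\chi_A(\bfx)\chi_A(\bfx-\bft)
\end{equation*}
is a non-negative measurable function on $[0,1)^2\times[0,1)^2$, so Tonelli's theorem permits me to exchange the order of integration:
\begin{equation*}
\int_{[0,1)^2}\lambda_2(A\cap(A+\bft))\,\dd\bft
=\int_{[0,1)^2}\chi_A(\bfx)\left(\int_{[0,1)^2}\chi_A(\bfx-\bft)\,\dd\bft\right)\dd\bfx.
\end{equation*}

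For fixed $\bfx$, the substitution $\bfs=\bfx-\bft$ (mod $1$) is a measure-preserving bijection of the torus, so the inner integral equals $\int_{[0,1)^2}\chi_A(\bfs)\,\dd\bfs=\lambda_2(A)$, independent of $\bfx$. Pulling this constant out yields $\lambda_2(A)\cdot\int_{[0,1)^2}\chi_A(\bfx)\,\dd\bfx=(\lambda_2(A))^2$, which is \eqref{eq1.7}.

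There is really no significant obstacle: the only things to verify are measurability of $(\bfx,\bft)\mapsto\chi_A(\bfx-\bft)$ on the product torus (immediate, since translation on $\Tt^2\times\Tt^2$ is a measurable bijection) and the translation-invariance of $\lambda_2$ on $\Tt^2$, both of which are standard. The argument is the classical ``autocorrelation integrates to mass squared'' identity and makes no use of the hypothesis that $\bfw$ is a Kronecker direction; Lemma~\ref{lem11} is purely measure-theoretic and will subsequently be combined with the invariance hypothesis $A+\bfw=A$ from \eqref{eq1.6} to derive ergodicity of $T_0$.
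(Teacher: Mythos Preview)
Your proof is correct and follows essentially the same approach as the paper: both express $\lambda_2(A\cap(A+\bft))$ via characteristic functions, swap the order of integration (the paper cites Fubini, you cite Tonelli, which is arguably more apt since the integrand is non-negative), and then use translation-invariance of Lebesgue measure on the torus to evaluate the inner integral as $\lambda_2(A)$. The only cosmetic difference is that the paper writes $\chi_{A+\bft}(\bfx)=\chi_{A-\bfx}(-\bft)$ and integrates over $A$ directly, whereas you write $\chi_{A+\bft}(\bfx)=\chi_A(\bfx-\bft)$ and keep the factor $\chi_A(\bfx)$ explicit; these are equivalent rearrangements of the same computation.
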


\begin{proof}
For any set $B\subset[0,1)^2$, let $\chi_B$ denote the characteristic function of~$B$.
Then
\begin{align}
&
\int_{[0,1)^2}\lambda_2(A\cap(A+\bft))\,\dd\bft
=\int_{[0,1)^2}\left(\int_A\chi_{A+\bft}(\bfx)\,\dd\bfx\right)\dd\bft
\nonumber
\\
&\quad
=\int_{[0,1)^2}\left(\int_A\chi_{A-\bfx}(-\bft)\,\dd\bfx\right)\dd\bft
=\int_A\left(\int_{[0,1)^2}\chi_{A-\bfx}(-\bft)\,\dd\bft\right)\dd\bfx
\nonumber
\\
&\quad
=\int_A\lambda_2(A-\bfx)\,\dd\bfx
=\int_A\lambda_2(A)\,\dd\bfx
=(\lambda_2(A))^2.
\nonumber
\end{align}
The change in the order of integration is justified by Fubini's theorem, and this completes the proof.
\end{proof}

The identity \eqref{eq1.7} represents an average.
Let $\eps>0$ be chosen to be sufficiently small.
Then there exists $\bft_0\in[0,1)^2$ such that
\begin{displaymath}
\lambda_2(A\cap(A+\bft_0))<(\lambda_2(A))^2+\eps,
\end{displaymath}
and the Kronecker density theorem implies that there exists a positive integer $n_0$ such that the multiple $n_0\bfw$
gets so close to $\bft_0$ modulo $[0,1)^2$ that
\begin{displaymath}
\lambda_2(A\cap(A+n_0\bfw))<\lambda_2(A\cap(A+\bft_0))+\eps.
\end{displaymath}
Then it follows that
\textcolor{white}{xxxxxxxxxxxxxxxxxxxxxxxxxxxxxx}
\begin{equation}\label{eq1.8}
\lambda_2(A\cap(A+n_0\bfw))<(\lambda_2(A))^2+2\eps.
\end{equation}
On the other hand, the identity on the right in \eqref{eq1.6} clearly gives $A+n_0\bfw=A$, so \eqref{eq1.8} becomes
$\lambda_2(A)<(\lambda_2(A))^2+2\eps$.
However, this inequality cannot possibly hold when $\eps>0$ is sufficiently small, since $0<\lambda_2(A)<1$.
The contradiction establishes the ergodicity of~$T_0$.

The Birkhoff ergodic theorem now guarantees that both $f_\WWW$ and $f_\SSS$ are constant integer valued functions.
It then follows from \eqref{eq1.2} and \eqref{eq1.5} that
\begin{equation}\label{eq1.9}
f_\WWW(P)=1
\quad\mbox{and}\quad
f_\SSS(P)=2
\quad
\mbox{for almost all $P\in[0,1)^2$}.
\end{equation}

Let us return to the narrow strip $S_2$ on the face~$Y_2$, as shown in Figure~1.2.
We denote by $S_1$ and $S_3$ narrow strips on the lower faces $Y_1$ and $Y_3$ respectively, where the images on $[0,1)^2$
of $S_1,S_2,S_3$ under the projection \eqref{eq1.3} coincide.
In other words, the strips $S_1,S_2,S_3$ are in the same relative positions on the faces $Y_1,Y_2,Y_3$ respectively.

Using line segments in the $x$-direction, we can divide the strips $S_1,S_2,S_3$ into small congruent rectangles in a natural way.
We refer to them as \textit{small special rectangles} of the decomposition of the strips $S_1,S_2,S_3$.
Note that we have so far not specified the dimensions of these rectangles.

Let $R_1,R_2,R_3$ denote small special rectangles of the decomposition of the strips $S_1,S_2,S_3$
on the faces $Y_1,Y_2,Y_3$ respectively, with the condition that their images on $[0,1)^2$ under the projection
\eqref{eq1.3} coincide, so that they are in the same relative positions on the faces $Y_1,Y_2,Y_3$ respectively.
It is clear that the image of each of $R_1,R_2,R_3$ under $T_{\alpha,\beta}$ splits along the singular edges $E_1,E_2,E_3$
into congruent halves, as shown in the side views given in Figure~1.3 where the $z$-direction is suppressed.
Here, for each $\sigma=1,2,3$, the rectangle $R_\sigma$ splits into the left half $R_\sigma^-$ and the right half~$R_\sigma^+$,
and it is clear that their images under $T_{\alpha,\beta}$ may lie on distinct $Y$-faces.

\begin{displaymath}
\begin{array}{c}
\includegraphics[scale=0.8]{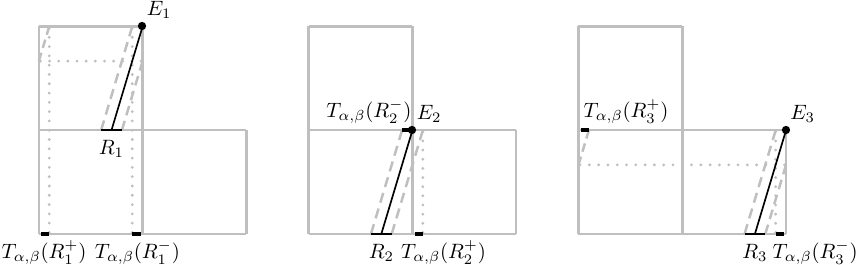}
\vspace{3pt}\\
\mbox{Figure 1.3: side view of L-solid translation $3$-manifold showing}
\\
\mbox{the rectangles $R_1,R_2,R_3$ and their images under $T_{\alpha,\beta}$}
\end{array}
\end{displaymath}

Figure~1.4 is a summary of the pictures in Figure~1.3.
The union of the pairs
\begin{displaymath}
T_{\alpha,\beta}(R_3^-)\cup T_{\alpha,\beta}(R_1^+),
\quad
T_{\alpha,\beta}(R_2^-)\cup T_{\alpha,\beta}(R_3^+),
\quad
T_{\alpha,\beta}(R_1^-)\cup T_{\alpha,\beta}(R_2^+)
\end{displaymath}
are rectangles the same size as the rectangles $R_1,R_2,R_3$, each split down the middle into congruent halves by a line in the $z$-direction.

\begin{displaymath}
\begin{array}{c}
\includegraphics[scale=0.8]{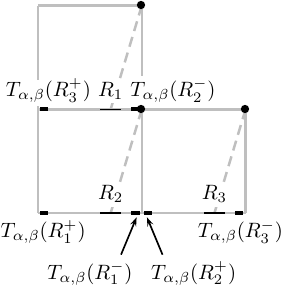}
\vspace{3pt}\\
\mbox{Figure 1.4: side view of L-solid translation $3$-manifold showing}
\\
\mbox{the rectangles $R_1,R_2,R_3$ and their images under $T_{\alpha,\beta}$}
\end{array}
\end{displaymath}

The situation is somewhat clearer if we study the images of $R_1,R_2,R_3$ under $T_{\alpha,\beta}^2$,
where the second application of $T_{\alpha,\beta}$ does not produce any further splitting,
as shown in Figure~1.5.

\begin{displaymath}
\begin{array}{c}
\includegraphics[scale=0.8]{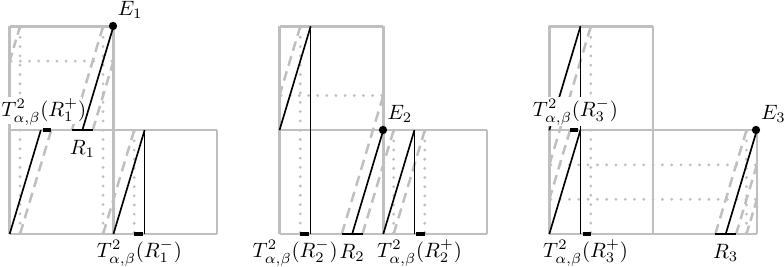}
\vspace{3pt}\\
\mbox{Figure 1.5: side view of L-solid translation $3$-manifold showing}
\\
\mbox{the rectangles $R_1,R_2,R_3$ and their images under $T_{\alpha,\beta}^2$}
\end{array}
\end{displaymath}

Figure~1.6 is a summary of the pictures in Figure~1.5.
The union of the pairs
\begin{equation}\label{eq1.10}
\begin{array}{l}
R^*_1=T_{\alpha,\beta}^2(R_3^-)\cup T_{\alpha,\beta}^2(R_1^+),
\vspace{3pt}\\
R^*_2=T_{\alpha,\beta}^2(R_2^-)\cup T_{\alpha,\beta}^2(R_3^+),
\vspace{3pt}\\
R^*_3=T_{\alpha,\beta}^2(R_1^-)\cup T_{\alpha,\beta}^2(R_2^+),
\end{array}
\end{equation}
are rectangles on the faces $Y_1,Y_2,Y_3$ respectively, the same size as $R_1,R_2,R_3$,
each split into congruent halves by a line in the $z$-direction.
Note that the images on $[0,1)^2$ of $R^*_1,R^*_2,R^*_3$ under the projection \eqref{eq1.3} coincide,
so that $R^*_1,R^*_2,R^*_3$ are in the same relative positions on the faces $Y_1,Y_2,Y_3$ respectively.

\begin{displaymath}
\begin{array}{c}
\includegraphics[scale=0.8]{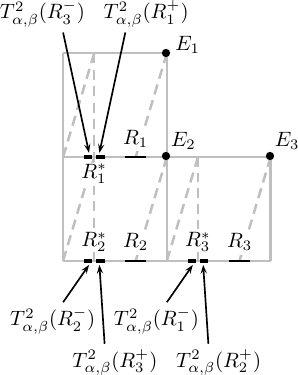}
\vspace{3pt}\\
\mbox{Figure 1.6: side view of L-solid translation $3$-manifold showing}
\\
\mbox{the rectangles $R_1,R_2,R_3$ and their images under $T_{\alpha,\beta}^2$}
\end{array}
\end{displaymath}

We need the following measure theoretic lemma.

\begin{lemma}\label{lem12}
There exists a small special rectangle $R_1$ on the face $Y_1$ of the L-solid translation $3$-manifold such that
the following two conditions are satisfied:

\emph{(i)}
The set $R_1$ satisfies
\begin{equation}\label{eq1.11}
\frac{\lambda_2(R_1\cap\WWW)}{\lambda_2(R_1)}>\frac{99}{100}
\quad\mbox{or}\quad
\frac{\lambda_2(R_1\cap\SSS)}{\lambda_2(R_1)}>\frac{99}{100}.
\end{equation}

\emph{(ii)}
There are analogues of \emph{(i)} for the small special rectangles $R_2$ and $R_3$ on the faces $Y_2$ and $Y_3$
respectively of the L-solid translation $3$-manifold, defined such that the images on $[0,1)^2$ of $R_1,R_2,R_3$
under the projection \eqref{eq1.3} coincide, as well as for the rectangles $R^*_1,R^*_2,R^*_3$ defined by \eqref{eq1.10}.
\end{lemma}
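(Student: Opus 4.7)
My plan is to deduce Lemma~\ref{lem12} from the Lebesgue density theorem applied to the measurable sets $\WWW,\SSS\subseteq\YYY$, combined with a union-bound argument that forces the density property to hold simultaneously at the centres of all six rectangles.

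The first observation is that the six rectangles in the statement live over just two points of $[0,1)^2$. Indeed, since the projection \eqref{eq1.3} conjugates $T_{\alpha,\beta}$ to the $(\alpha,\beta)$-shift $T_0$, and since $\WWW,\SSS$ are $T_{\alpha,\beta}$-invariant, the recombined rectangles of \eqref{eq1.10} sit on $Y_1,Y_2,Y_3$ at the common relative position $P^{*}=P+(2\alpha,2\beta)\bmod 1$, where $P\in[0,1)^2$ is the common relative position of $R_1,R_2,R_3$. The six rectangle centres are therefore the six lifts $(P,j)$ and $(P^{*},j)$, $j=1,2,3$, viewed inside $\YYY$.

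By \eqref{eq1.5}, almost every point of $\YYY$ lies in $\WWW\cup\SSS$, so by the Lebesgue density theorem almost every such point is a density point of whichever colour contains it, the convergence being valid along axis-aligned rectangles of bounded eccentricity shrinking to the point. For $\eta>0$ let $\YYY_\eta\subseteq\YYY$ denote the set of $Q\in\YYY$ such that every axis-aligned rectangle of bounded eccentricity and diameter at most $\eta$ centred at $Q$ has density exceeding $99/100$ in one of the two colours; the sets $\YYY_\eta$ monotonically exhaust a full-measure subset of $\YYY$ as $\eta\to 0^+$. Let $\BBB(\eta)\subseteq[0,1)^2$ denote the set of $P$ for which at least one of the six lifted centres above fails to lie in $\YYY_\eta$. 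Since \eqref{eq1.3} is measure-preserving on each face, a union bound gives $\lambda_2(\BBB(\eta))\le 2(3-\lambda_2(\YYY_\eta))\to 0$ as $\eta\to 0^+$.

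Taking $\eta$ small enough that this bound is strictly less than the area of the projection of $S_2$ to $[0,1)^2$, I can pick a $P$ in that projection but outside $\BBB(\eta)$. Choosing then the dimensions of the small special rectangles so that each of $R_1,R_2,R_3,R^{*}_1,R^{*}_2,R^{*}_3$ has bounded eccentricity and diameter at most $\eta$ forces, by the very definition of $\YYY_\eta$, each of the six rectangles to have density exceeding $99/100$ in one of the colours $\WWW,\SSS$, which yields \eqref{eq1.11} and its analogues. The main technical point to handle carefully is the simultaneity requirement: pointwise Lebesgue density at each individual centre does not by itself supply a common rectangle size that works at all six centres, and it is the monotone exhaustion of $\YYY$ by the sets $\YYY_\eta$ together with the above union bound that resolves this obstacle.
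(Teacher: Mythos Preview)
There is a genuine gap in the argument, and it lies exactly at the point you flag as ``the main technical point''. By the paper's definition, a \emph{small special rectangle} is obtained by slicing the strip $S_\sigma$ with $x$-parallel segments, so every $R_\sigma$ spans the full $x$-width of $S_\sigma$ and is centred, in the $x$-coordinate, precisely at $\{-\alpha\}$ (this is also what makes the image under $T_{\alpha,\beta}$ split into \emph{congruent} halves, as required for \eqref{eq1.10}). Consequently the common projected centre $P$ of $R_1,R_2,R_3$ is forced to lie on the single vertical line $\{(\{-\alpha\},z):z\in[0,1)\}$, a set of $\lambda_2$-measure zero. Your union bound $\lambda_2(\BBB(\eta))\to 0$ therefore gives no control whatsoever over whether any admissible $P$ lies outside $\BBB(\eta)$. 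If instead you try to decouple the strip width $w_0$ from the rectangle scale, you run into a circularity: for the rectangle to have diameter at most $\eta$ you need $w_0\le\eta$, while your comparison ``$\lambda_2(\BBB(\eta))<\mathrm{area}(\mathrm{proj}\,S_2)=w_0$'' then demands $\lambda_2(\BBB(\eta))<\eta$, i.e.\ a \emph{rate} in the Lebesgue density theorem, which is simply unavailable for an arbitrary measurable $\WWW$.

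This is not a cosmetic issue, and it is exactly why the paper's proof in Section~\ref{sec3} does not appeal to Lebesgue density at all. The paper instead exploits the extra structure you have not used: the $T_{\alpha,\beta}$-invariance of $\WWW$ and $\SSS$. Using the continued fraction denominators $q_k(\alpha)$ and $q'_h(\beta)$, the desired rectangle $R_\sigma$ is placed at the head of a long splitting-free $T_{\alpha,\beta}$-power chain of length $q_{k+1}-1$. Because the colour density is constant along such a chain, it suffices to find \emph{one} member of the chain that is $99\%$ monochromatic. The chain sweeps through a positive proportion of $\YYY$, so an averaging argument against an elementary approximation $\WWW_1$ of $\WWW$ (a finite union of rectangles) shows that most chains contain a good member; a final pigeonhole (Lemma~\ref{lem32}) then produces a common $z$-level $j_0$ for which all six chains are simultaneously non-defective. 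In short, the paper trades the unavailable pointwise density on a null set for an ergodic-style averaging along orbits, and that dynamical input is what your argument is missing.
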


These rectangles are illustrated in Figure~1.7.

\begin{displaymath}
\begin{array}{c}
\includegraphics[scale=0.8]{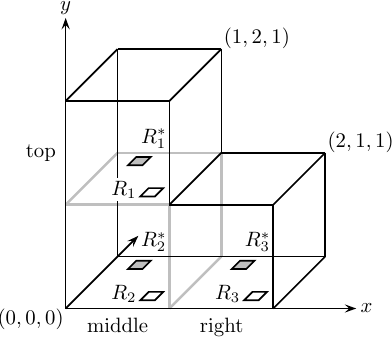}
\vspace{3pt}\\
\mbox{Figure 1.7: the rectangles $R_1,R_2,R_3$ and $R^*_1,R^*_2,R^*_3$}
\end{array}
\end{displaymath}

The choice of $99/100$ in \eqref{eq1.11} is accidental.
Note also that Lemma~\ref{lem12} somewhat resembles the Lebesgue density theorem.
We include the proof in Section~\ref{sec3}.

Recall that we have assumed that $T_{\alpha,\beta}$ is not ergodic, and this implies that there exist disjoint $T_{\alpha,\beta}$-invariant
subsets $\WWW,\SSS\subset\YYY$ such that $\YYY=\WWW\cup\SSS$ and \eqref{eq1.4} and \eqref{eq1.9} are satisfied.
Using Lemma~\ref{lem12}, we are now ready to deduce a contradiction.

Lemma~\ref{lem12} ensures that each rectangle $R_\sigma$, $\sigma=1,2,3$, has a dominant colour~$\CCC_\sigma$,
and each rectangle $R^*_\sigma$, $\sigma=1,2,3$, has a dominant colour~$\CCC^*_\sigma$,
where each $\CCC_\sigma$ or $\CCC^*_\sigma$ is white or silver.
We now use Figures 1.3, 1.5 and~1.6, and also note that since $\WWW$ and $\SSS$ are $T_{\alpha,\beta}$-invariant,
they are also $T_{\alpha,\beta}^2$-invariant.

The image of the rectangle $R_1$ under $T_{\alpha,\beta}$ splits along the edge $E_1$ into congruent halves.
The image of~$R_1^-$, the left half of~$R_1$, under $T_{\alpha,\beta}^2$ forms the left half of~$R^*_3$.
The image of~$R_1^+$, the right half of~$R_1$, under $T_{\alpha,\beta}^2$ forms the right half of~$R^*_1$.
Thus
\begin{equation}\label{eq1.12}
\CCC_1=\CCC^*_1=\CCC^*_3.
\end{equation}
The image of the rectangle $R_2$ under $T_{\alpha,\beta}$ splits along the edge $E_2$ into congruent halves.
The image of~$R_2^-$, the left half of~$R_2$, under $T_{\alpha,\beta}^2$ forms the left half of~$R^*_2$.
The image of~$R_2^+$, the right half of~$R_2$, under $T_{\alpha,\beta}^2$ forms the right half of~$R^*_3$.
Thus
\begin{equation}\label{eq1.13}
\CCC_2=\CCC^*_2=\CCC^*_3.
\end{equation}
The image of the rectangle $R_3$ under $T_{\alpha,\beta}$ splits along the edge $E_3$ into congruent halves.
The image of~$R_3^-$, the left half of~$R_3$, under $T_{\alpha,\beta}^2$ forms the left half of~$R^*_1$.
The image of~$R_3^+$, the right half of~$R_3$, under $T_{\alpha,\beta}^2$ forms the right half of~$R^*_2$.
Thus
\begin{equation}\label{eq1.14}
\CCC_3=\CCC^*_1=\CCC^*_2.
\end{equation}
Combining \eqref{eq1.12}--\eqref{eq1.14}, we conclude $\CCC_1=\CCC_2=\CCC_3$.
This means that the dominant colour in the rectangles $R_1,R_2,R_3$ is the same colour~$\CCC$.

Thus we can find measurable subsets $R_\sigma^{(0)}\subset R_\sigma$, $\sigma=1,2,3$, satisfying
\begin{displaymath}
R_\sigma^{(0)}\cap\CCC=R_\sigma^{(0)}
\quad\mbox{and}\quad
\frac{\lambda_2(R_\sigma^{(0)})}{\lambda_2(R_\sigma)}=\frac{9}{10},
\end{displaymath}
and such that their images on $[0,1)^2$ under the projection \eqref{eq1.3} coincide.
This clearly contradicts \eqref{eq1.9}, and establishes the ergodicity of $T_{\alpha,\beta}$.

The Birkhoff ergodic theorem now implies that for any fixed Kronecker direction $\bfv\in\Rr^3$, a half-infinite geodesic
in the L-solid translation $3$-manifold with almost any starting point is uniformly distributed.
This establishes a weaker form of Theorem~\ref{thm1}.
In Section~\ref{sec4}, we extend ergodicity to unique ergodicity and complete the proof.

Theorem~\ref{thm1} is best possible in the sense that there is no uniform distribution if the direction $\bfv=(v_1,v_2,v_3)\in\Rr^3$
of a half-infinite geodesic is not a Kronecker direction, so that the coordinates $v_1,v_2,v_3$ are linearly dependent over~$\Qq$.
In this case, we do not even have density.
Indeed, the classical Kronecker density theorem implies that even the \textit{modulo one} projection of the geodesic orbit
to the unit torus $[0,1)^3$ is not dense.

%
%

\section{Splitting diagram and splitting permutation}\label{sec2}

The key part in the proof of ergodicity in the case of geodesics in the L-solid translation $3$-manifold
is illustrated by the \textit{splitting diagram} given in Figure~2.1, which is a simpler version of Figure~1.6.

\begin{displaymath}
\begin{array}{c}
\includegraphics[scale=0.8]{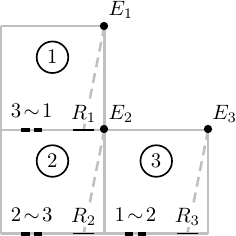}
\vspace{3pt}\\
\mbox{Figure 2.1: splitting diagram of the L-solid translation $3$-manifold}
\end{array}
\end{displaymath}

Here we label the top, middle and right cubes by the numbers $1,2,3$ respectively.
Then for every $\sigma=1,2,3$, $Y_\sigma$ is the bottom $Y$-face of cube~$\sigma$,
$R_\sigma$ is a small special rectangle on~$Y_\sigma$,
and the image of $R_\sigma$ under $T_{\alpha,\beta}$ splits along the edge~$E_\sigma$,
and we then study the images of the congruent halves $R_\sigma^-$ and $R_\sigma^+$ of $R_\sigma$ under $T_{\alpha,\beta}^2$.
Furthermore, for every $\sigma=1,2,3$, the small special rectangle~$R_\sigma$, and so also its images under $T_{\alpha,\beta}^2$,
has predominantly the colour~$\CCC_\sigma$.

The label $3\sim1$ on the face $Y_1$ indicates that $T_{\alpha,\beta}^2(R_3^-)\cup T_{\alpha,\beta}^2(R_1^+)$
is, apart from a set of zero measure, a rectangle on the face~$Y_1$, and $\CCC_3=\CCC_1$.
The label $2\sim3$ on the face $Y_2$ indicates that $T_{\alpha,\beta}^2(R_2^-)\cup T_{\alpha,\beta}^2(R_3^+)$
is, apart from a set of zero measure, a rectangle on the face~$Y_2$, and $\CCC_2=\CCC_3$.
The label $1\sim2$ on the face $Y_3$ indicates that $T_{\alpha,\beta}^2(R_1^-)\cup T_{\alpha,\beta}^2(R_2^+)$
is, apart from a set of zero measure, a rectangle on the face~$Y_3$, and $\CCC_1=\CCC_2$.
Furthermore, we can compress this information into a single cyclic permutation $3\to1\to2\to3$.
We call this the \textit{splitting permutation} of the L-solid translation $3$-manifold.

Our argument in Section~\ref{sec1} can therefore be adapted to study geodesic flow on a class of finite polycube translation
$3$-manifolds where each member is essentially the cartesian product of a finite polysquare translation surface and the
unit torus $[0,1)$, provided that the corresponding splitting permutation is a single cyclic permutation.

We now examine a few more examples.

\begin{example}\label{ex21}
Consider the $5$-cube staircase translation $3$-manifold, with splitting diagram given in Figure~2.2.

\begin{displaymath}
\begin{array}{c}
\includegraphics[scale=0.8]{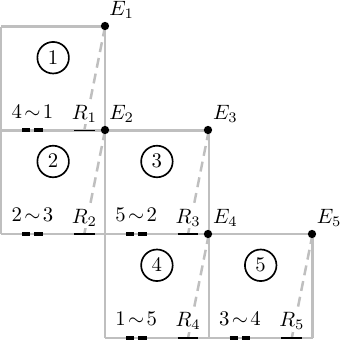}
\vspace{3pt}\\
\mbox{Figure 2.2: splitting diagram of the $5$-cube staircase translation $3$-manifold}
\end{array}
\end{displaymath}

The splitting permutation
\begin{equation}\label{eq2.1}
4\to1\to5\to2\to3\to4
\end{equation}
is given in the form of a single cyclic permutation.

Let us add $2$ cubes to this $3$-manifold and consider a $7$-cube staircase translation $3$-manifold,
with splitting diagram given in Figure~2.3.

\begin{displaymath}
\begin{array}{c}
\includegraphics[scale=0.8]{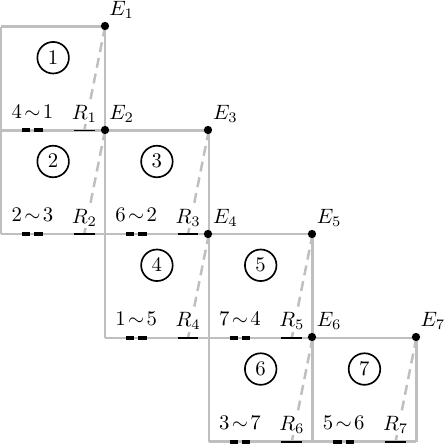}
\vspace{3pt}\\
\mbox{Figure 2.3: splitting diagram of the $7$-cube staircase translation $3$-manifold}
\end{array}
\end{displaymath}

The splitting permutation
\begin{equation}\label{eq2.2}
4\to1\to5\to6\to2\to3\to7\to4
\end{equation}
is also given in the form of a single cyclic permutation.

Note that to go from \eqref{eq2.1} to \eqref{eq2.2}, we insert $7$ after~$3$, and insert $6$ after~$5$.
The insertion corresponds to the following observations after adding cubes $6$ and~$7$:

$\circ$ The image $T_{\alpha,\beta}^2(R_3^-)$ moves from $Y_5$ to~$Y_6$.

$\circ$ The image $T_{\alpha,\beta}^2(R_5^-)$ moves from $Y_3$ to~$Y_7$.

$\circ$ The image $T_{\alpha,\beta}^2(R_\sigma^-)$ is unchanged for $\sigma=1,2$ and $\sigma=4$.

$\circ$ The image $T_{\alpha,\beta}^2(R_\sigma^+)$ is unchanged for $\sigma=1,2,3,4,5$.

$\circ$ The images $T_{\alpha,\beta}^2(R_6^-)$ and $T_{\alpha,\beta}^2(R_7^-)$ are on $Y_3$ and $Y_5$ respectively.

$\circ$ The images $T_{\alpha,\beta}^2(R_6^+)$ and $T_{\alpha,\beta}^2(R_7^+)$ are on $Y_7$ and $Y_6$ respectively.

It can be shown that if we go from the $s$-cube staircase translation $3$-manifold to the $(s+2)$-cube staircase
translation $3$-manifold, where $s$ is odd, then the following assertions are valid:

$\circ$ The image $T_{\alpha,\beta}^2(R_{s-2}^-)$ moves from $Y_s$ to~$Y_{s+1}$.

$\circ$ The image $T_{\alpha,\beta}^2(R_s^-)$ moves from $Y_{s-2}$ to~$Y_{s+2}$.

$\circ$ The image $T_{\alpha,\beta}^2(R_\sigma^-)$ is unchanged for $\sigma=1,\ldots,s-3$ and $\sigma=s-1$.

$\circ$ The image $T_{\alpha,\beta}^2(R_\sigma^+)$ is unchanged for $\sigma=1,\ldots,s$.

$\circ$ The images $T_{\alpha,\beta}^2(R_{s+1}^-)$ and $T_{\alpha,\beta}^2(R_{s+2}^-)$ are on $Y_{s-2}$ and $Y_s$ respectively.

$\circ$ The images $T_{\alpha,\beta}^2(R_{s+1}^+)$ and $T_{\alpha,\beta}^2(R_{s+2}^+)$ are on $Y_{s+2}$ and $Y_{s+1}$ respectively.

This leads to the insertion of $s+2$ after $s-2$ and the insertion of $s+1$ after $s$ in the splitting permutation.
Indeed, this argument repeatedly inductively then shows that for every $s$-cube staircase translation $3$-manifold, where $s$ is odd,
the splitting permutation is a single cyclic permutation.
\end{example}

\begin{example}\label{ex22}
A $(3,2)$-snake translation surface is a finite polysquare translation surface where the square faces form a finite zigzagging path
from left to right, every street has length~$3$, apart from the two streets at the end which have length~$2$.
A $(3,2)$-snake translation $3$-manifold is then the cartesian product of a $(3,2)$-snake translation surface and the unit torus $[0,1)$.
Figure~2.4 is the splitting diagram of such a $3$-manifold with $7$ cubes.

\begin{displaymath}
\begin{array}{c}
\includegraphics[scale=0.8]{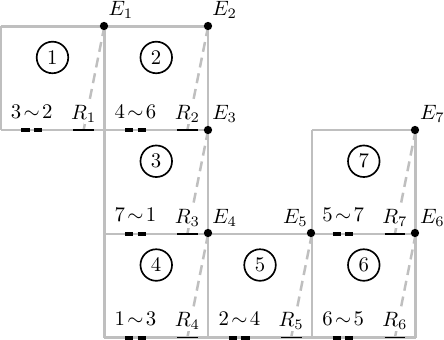}
\vspace{3pt}\\
\mbox{Figure 2.4: splitting diagram of a $7$-cube $(3,2)$-snake translation $3$-manifold}
\end{array}
\end{displaymath}

The splitting permutation
\begin{equation}\label{eq2.3}
3\to2\to4\to6\to5\to7\to1\to3
\end{equation}
is given in the form of a single cyclic permutation.

Let us add $2$ cubes to this $3$-manifold and consider a $9$-cube $(3,2)$-snake translation $3$-manifold,
with splitting diagram given in Figure~2.5.

\begin{displaymath}
\begin{array}{c}
\includegraphics[scale=0.8]{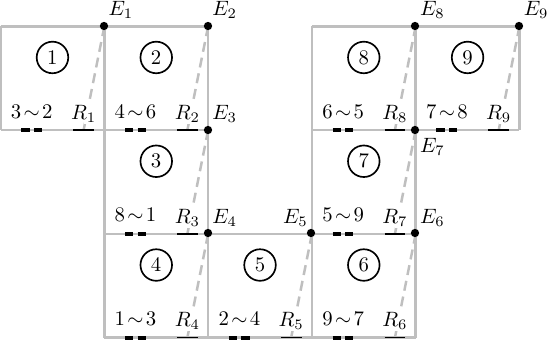}
\vspace{3pt}\\
\mbox{Figure 2.5: splitting diagram of a $9$-cube $(3,2)$-snake translation $3$-manifold}
\end{array}
\end{displaymath}

The splitting permutation
\begin{equation}\label{eq2.4}
3\to2\to4\to6\to5\to9\to7\to8\to1\to3
\end{equation}
is also given in the form of a single cyclic permutation.

Note that to go from \eqref{eq2.3} to \eqref{eq2.4}, we insert $9$ after~$5$, and insert $8$ after~$7$.
Here the general case is more complicated.
As we go from left to right, any vertical street along the way may go up or down, and the different choices lead to somewhat
different splitting diagrams.
\end{example}

\begin{example}\label{ex23}
Figure~2.6 shows the splitting diagram of a polycube translation $3$-manifold which is the cartesian product of a
finite polysquare translation surface with a hole and the unit torus $[0,1)$.
Here the finite polysquare translation surface has edge identification via perpendicular translation, and
has $5$ horizontal and $6$ vertical streets.

\begin{displaymath}
\begin{array}{c}
\includegraphics[scale=0.8]{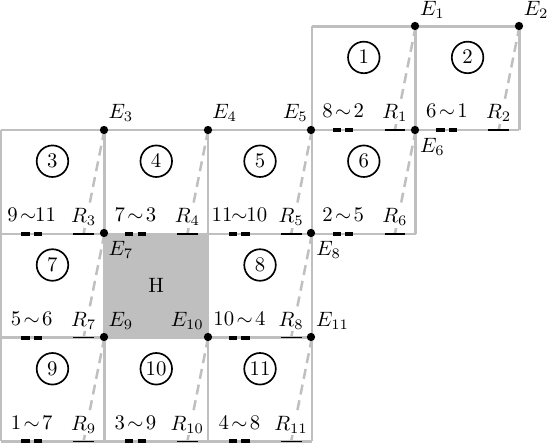}
\vspace{3pt}\\
\mbox{Figure 2.6: splitting diagram of a finite polycube translation $3$-manifold}
\\
\mbox{with a hole}
\end{array}
\end{displaymath}

The splitting permutation
\begin{displaymath}
8\to2\to5\to6\to1\to7\to3\to9\to11\to10\to4\to8
\end{displaymath}
is given in the form of a single cyclic permutation.
\end{example}

The next two examples concern cases where the splitting permutation is not a single cyclic permutation.
This can fail in more than one way.

\begin{example}\label{ex24}
Here we consider a very simple case, where we add an extra cube to the L-solid translation $3$-manifold to form
a $4$-cube L-solid translation $3$-manifold.
Figure~2.7 shows the splitting diagram.

\begin{displaymath}
\begin{array}{c}
\includegraphics[scale=0.8]{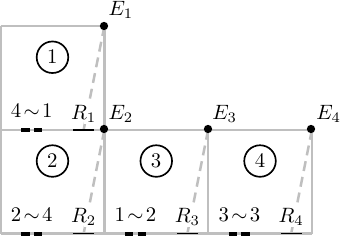}
\vspace{3pt}\\
\mbox{Figure 2.7: splitting diagram of a $4$-cube L-solid translation $3$-manifold}
\end{array}
\end{displaymath}

The splitting permutation is given by $4\to1\to2\to4$ and $3\to3$ which is clearly not a single cyclic permutation.
There is a stationary point~$3$.
\end{example}

\begin{example}\label{ex25}
Consider a polycube translation $3$-manifold which is the cartesian product of a finite polysquare translation surface
with $6$ square faces and the unit torus $[0,1)$.
Figure~2.8 shows the splitting diagram.

\begin{displaymath}
\begin{array}{c}
\includegraphics[scale=0.8]{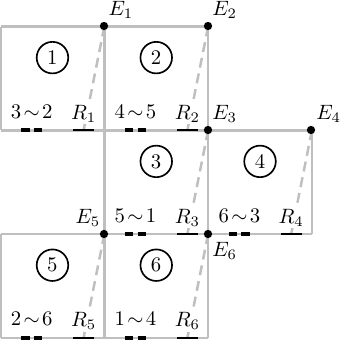}
\vspace{3pt}\\
\mbox{Figure 2.8: splitting diagram of a $6$-cube solid polysquare translation $3$-manifold}
\end{array}
\end{displaymath}

Here the splitting permutation
\begin{displaymath}
3\to2\to6\to3
\quad\mbox{and}\quad
4\to5\to1\to4
\end{displaymath}
is a product of two disjoint cyclic permutations.
\end{example}

For the class of polycube translation $3$-manifolds which are cartesian products
of finite polysquare translation surfaces and the unit torus $[0,1)$,
the question of which of these have splitting permutations which are cyclic has a rather simple answer.
Cyclic splitting permutation means a single cycle in the cycle decomposition, and this happens if and only if
the boundary edge identification of the finite polysquare translation surface reduces the number of vertices to~$1$.
Indeed, this reduction to $1$ vertex is precisely the same process by which the colours in the splitting points
are proved to be equal and we end up with a single colour.
And \textit{vice versa}.

In Section~\ref{sec5}, we prove the following result.

\begin{theorem}\label{thm2}
Suppose that a polycube translation $3$-manifold $\MMM$ is the cartesian product of a polysquare translation surface $\PPP$
with $d$ square faces and the unit torus $[0,1)$.
Suppose further that the splitting permutation contains a cycle of length greater than~$d/2$.
Then any half-infinite geodesic in $\MMM$ with a Kronecker direction $\bfv\in\Rr^3$ is uniformly distributed unless it hits
a singularity.
\end{theorem}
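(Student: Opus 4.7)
The plan is to extend the splitting method of Section~\ref{sec1} from the L-solid manifold to the general setting of Theorem~\ref{thm2}, with the sole new ingredient being the $2$-dimensional Gutkin--Veech theorem applied to the polysquare base $\PPP$. Arguing by contradiction, I would suppose $T_{\alpha,\beta}$ is not ergodic on $\YYY=Y_1\cup\cdots\cup Y_d$, and write $\YYY=\WWW\cup\SSS$ as a disjoint union of invariant measurable sets of positive $2$-dimensional measure. The projection $(x,y,z)\mapsto(\{x\},\{z\})$ intertwines $T_{\alpha,\beta}$ with the $(\alpha,\beta)$-shift $T_0$ on the $2$-torus, which is ergodic by Lemma~\ref{lem11} and the Kronecker density theorem exactly as in Section~\ref{sec1}. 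Birkhoff's theorem then forces the multiplicity functions $f_\WWW$ and $f_\SSS$ to be constant integers $m_\WWW,m_\SSS\ge1$ summing to $d$.

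Next I would establish the $d$-face analogue of Lemma~\ref{lem12}: by the Lebesgue density theorem, applied simultaneously on the $d$ faces together with a union bound, there exist small special rectangles $R_1,\ldots,R_d$ on $Y_1,\ldots,Y_d$ whose images in $[0,1)^2$ coincide and such that each $R_\sigma$ is dominated by a single colour $\CCC_\sigma\in\{\WWW,\SSS\}$ to density exceeding any prescribed threshold strictly less than $1$. The splitting-diagram argument of Section~\ref{sec1} then generalises verbatim: whenever the splitting permutation contains an edge $\sigma\to\tau$, a half of $T_{\alpha,\beta}^2(R_\sigma)$ forms half of a rectangle $R^*$ on $Y_\tau$ occupying the same torus position as $R_\tau$, and constancy of $f_\WWW,f_\SSS$ forces $\CCC_\sigma=\CCC_\tau$. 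Iterating along any cycle $\sigma_1\to\cdots\to\sigma_\ell\to\sigma_1$ of the splitting permutation yields $\CCC_{\sigma_1}=\cdots=\CCC_{\sigma_\ell}$.

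Let $I$ be the distinguished cycle of length $k>d/2$. At every good torus position, $I$ carries a single dominant colour, and this colour is in fact position-independent: if it were $\WWW$ at one good point and $\SSS$ at another, then $m_\WWW\ge k$ and $m_\SSS\ge k$ by Birkhoff, contradicting $m_\WWW+m_\SSS=d<2k$. Call the common colour $\CCC$ and the other $\CCC'$. Passing to the shrinking-rectangle limit and applying Lebesgue density, for every $\sigma\in I$ and almost every $\pi\in[0,1)^2$ the preimage of $\pi$ on $Y_\sigma$ lies in $\CCC$; Fubini then gives $Y_\sigma\subset\CCC$ modulo a null set for each $\sigma\in I$. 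Hence $\CCC'\subset\bigcup_{\sigma\notin I}Y_\sigma$, a set of $2$-dimensional measure $d-k<d/2$.

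To close the argument I would invoke the Gutkin--Veech theorem on $\PPP$. Since $\MMM=\PPP\times[0,1)$, the map $T_{\alpha,\beta}$ is a skew product with fibre rotation by $\beta$ over the $2$-dimensional return map $T_\alpha$ on the union of bottom edges $B_1\cup\cdots\cup B_d$ of $\PPP$; writing $\pi_2:(x,z,\sigma)\mapsto(x,\sigma)$, we have $\pi_2\circ T_{\alpha,\beta}=T_\alpha\circ\pi_2$. The set $\pi_2(\CCC')$ is therefore $T_\alpha$-invariant, so by Gutkin--Veech (which gives unique ergodicity of $T_\alpha$ for the Kronecker slope $\alpha$) we have $\lambda_1(\pi_2(\CCC'))\in\{0,d\}$. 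But $\pi_2(\CCC')\subset\bigcup_{\sigma\notin I}B_\sigma$ has $1$-dimensional measure at most $d-k<d$, so $\pi_2(\CCC')$ is null and Fubini gives $\lambda_2(\CCC')=0$, contradicting $\lambda_2(\CCC')>0$. This establishes ergodicity of $T_{\alpha,\beta}$, after which uniform distribution of every non-singular half-infinite geodesic follows by the same unique-ergodicity bootstrapping as in Section~\ref{sec4}. The principal obstacle I anticipate is the multi-face splitting-diagram combinatorics: one must rigorously verify that for an arbitrary polycube product $\MMM=\PPP\times[0,1)$ the cycles of the splitting permutation correctly control the dominant colours, and that the passage from ``rectangle dominated by colour $\CCC$'' to ``entire face contained in $\CCC$'' survives the shrinking-rectangle limit.
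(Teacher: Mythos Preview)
Your overall strategy is sound and closely parallels the paper's: reduce to ergodicity of $T_{\alpha,\beta}$, use ergodicity of the torus shift $T_0$ to make the multiplicity functions constant, apply the splitting argument along the long cycle to force monochromaticity, and finish with Gutkin--Veech on the polysquare base~$\PPP$. However, the step you yourself flag as the principal obstacle is in fact a genuine gap as written.

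The splitting argument only constrains rectangles $R_\sigma$ whose $T_{\alpha,\beta}$-image actually meets a singular edge; that is, rectangles lying in the narrow strip $S_\sigma\subset Y_\sigma$ around the $T_{\alpha,\beta}^{-1}$-preimage of~$E_\sigma$. For torus positions $\pi$ outside this strip no splitting occurs, so the colour equalities $\CCC_{\sigma_1}=\cdots=\CCC_{\sigma_k}$ along the cycle are simply unavailable there. Consequently your ``shrinking-rectangle limit'' can at best yield that the \emph{strip} $S_\sigma$ (for $\sigma$ in the long cycle~$I$) lies in $\CCC$ modulo a null set, not the entire face~$Y_\sigma$. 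The inclusion $\CCC'\subset\bigcup_{\sigma\notin I}Y_\sigma$ therefore does not follow, and the projection argument with $\pi_2(\CCC')$ breaks down. (There is also a minor issue: the projection of a measurable set need not be measurable, and modifying $\CCC'$ on a null set can change $\pi_2(\CCC')$ drastically; one should work instead with the $T_\alpha$-invariant fibre-integral $g(x,\sigma)=\lambda_1\{z:(x,y_\sigma,z)\in\CCC'\}$.)

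The paper circumvents this by never claiming the whole face is monochromatic. It first strengthens the density lemma (Lemma~5.1): not merely one $\eta$-nice collection exists, but the nice collections fill a $(1-\eps)$-proportion of the strip~$S_\sigma$. Combined with your position-independence argument (two nice collections of opposite colours on the cycle would force $m_\WWW,m_\SSS>d/2$), this gives that a $(1-\eps)(1-\eta)$-proportion of the strip $S_1$ lies in a single colour, say~$\WWW$. The contradiction then comes directly from Gutkin--Veech applied to $\psi(x,y)=\int_0^1\chi_{\WWW_3}(x,y,z)\,\dd z$ on~$\PPP$ (Lemma~5.2): this flow-invariant function is constant a.e.\ equal to $f_\WWW/d\le(d-1)/d$, yet on the strip it is forced arbitrarily close to~$1$. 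Equivalently, in your language, the function $g$ above is $T_\alpha$-invariant, hence constant; but it vanishes on the positive-measure set (strip $x$-range)$\times I$, so it vanishes identically, giving $\lambda_2(\CCC')=0$. Either way the whole-face claim is unnecessary: the strip alone, together with the constancy forced by Gutkin--Veech, already yields the contradiction.
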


It is clear that the restriction that we have imposed on the polycube translation $3$-manifolds is a very severe one.
It is an interesting open problem to study the corresponding problem for general finite polycube translation $3$-manifolds.

%
%

\section{Proof of Lemma~\ref{lem12}}\label{sec3}

Recall that we denote the Kronecker direction by $\bfv=(\alpha,1,\beta)$, where $\alpha$, $\beta$ and $\alpha/\beta$ are all irrational,
and also assume that $0<\alpha<1$ and $\beta>0$.

The irrational number $\alpha$ has infinite continued fraction
\begin{equation}\label{eq3.1}
\alpha=[0;a_1,a_2,a_3,\ldots]=\frac{1}{a_1+\frac{1}{a_2+\frac{1}{a_3+\ldots}}},
\end{equation}
where $a_1,a_2,a_3,\ldots$ are positive integers.
For any $k=1,2,3,\ldots,$ the $k$-th convergent
\begin{displaymath}
\frac{p_k}{q_k}=\frac{p_k(\alpha)}{q_k(\alpha)}=[0;a_1,a_2,a_3,\ldots,a_k],
\end{displaymath}
where the integers $p_k$ and $q_k$ are coprime, is defined in terms of a finite initial segment of the continued fraction \eqref{eq3.1},
with denominator $q_k=q_k(\alpha)$.
It is well known that
\begin{displaymath}
\Vert q\alpha\Vert\ge\Vert q_k\alpha\Vert=\vert q_k\alpha-p_k\vert,
\quad
q=1,\ldots,q_{k+1}-1,
\end{displaymath}
\begin{equation}\label{eq3.2}
\Vert q_{k+1}\alpha\Vert<\Vert q_k\alpha\Vert,
\end{equation}
and
\textcolor{white}{xxxxxxxxxxxxxxxxxxxxxxxxxxxxxx}
\begin{displaymath}
\frac{1}{q_{k+1}+q_k}\le\Vert q_k\alpha\Vert\le\frac{1}{q_{k+1}}.
\end{displaymath}
These show that the convergents give the best rational approximations to~$\alpha$.

For any integer $k\ge1$, let $\AAA_k(\alpha)$ be the partition of $[0,1)$ with $q_{k+1}$ division points
\begin{equation}\label{eq3.3}
\{\ell\alpha\},
\quad
\ell=0,1,2,\ldots,q_{k+1}-1.
\end{equation}
We need information on the gaps between neighboring partition points, and these can be described in terms of continued fractions.
We also use the famous $3$-distance theorem \cite{halton65,slater67,sos57,sos58,suranyi58,swierczkowski59} in its strong form.

\begin{lemma}[$2$-distance theorem]\label{lem31}
The distance between any pair of neighboring partition points \eqref{eq3.3} of $\AAA_k(\alpha)$ can only take the values
\begin{displaymath}
\Vert q_k\alpha\Vert
\quad\mbox{and}\quad
\Vert q_k\alpha\Vert+\Vert q_{k+1}\alpha\Vert,
\end{displaymath}
precisely $q_{k+1}-q_k$ and $q_k$ times respectively.
\end{lemma}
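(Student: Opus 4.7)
The plan is to derive Lemma~\ref{lem31} by specializing the classical three-distance theorem (cited in the paper) at the critical value $N=q_{k+1}$, and then identifying the two surviving gap lengths and their multiplicities via the best-approximation property of convergents together with the Bezout-type identity $p_{k+1}q_k-p_kq_{k+1}=(-1)^k$. Throughout, I would abbreviate $\eta=\Vert q_k\alpha\Vert$ and $\eta'=\Vert q_{k+1}\alpha\Vert$, and use the sign convention $q_k\alpha-p_k=(-1)^k\eta$ and $q_{k+1}\alpha-p_{k+1}=(-1)^{k+1}\eta'$, which forces the two best approximations to lie on opposite sides of their respective integers.

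First I would locate the two orbit points that neighbour~$0$ in the partition $\AAA_k(\alpha)$. By the best-approximation property \eqref{eq3.2} together with $\Vert q\alpha\Vert\ge\Vert q_k\alpha\Vert$ for $1\le q<q_{k+1}$, the partition point closest to $0$ on the side indicated by $(-1)^k$ is $\{q_k\alpha\}$, at distance~$\eta$. For the neighbour on the opposite side, the Bezout-type identity and the sign convention combine to show that
\begin{displaymath}
(q_{k+1}-q_k)\alpha\equiv (-1)^{k+1}(\eta+\eta')\pmod 1,
\end{displaymath}
and since $q_{k+1}-q_k<q_{k+1}$, the point $\{(q_{k+1}-q_k)\alpha\}$ does lie in the partition and sits at distance $\eta+\eta'$ on the opposite side of $0$ from $\{q_k\alpha\}$. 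Best-approximation again rules out any closer partition point on that side. So the two arcs of $\AAA_k(\alpha)$ incident to $0$ have lengths $\eta$ and $\eta+\eta'$.

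Next I would rule out a third gap length. By the three-distance theorem the three admissible lengths are $L_1$, $L_2$, $L_1+L_2$; since $\eta$ and $\eta+\eta'$ both appear around $0$, any third length would be forced to equal $\eta'=(\eta+\eta')-\eta$. But a gap of length $\eta'$ would require indices $\ell_1<\ell_2<q_{k+1}$ with $\Vert(\ell_2-\ell_1)\alpha\Vert=\eta'<\eta$, contradicting the best-approximation property, since $\ell_2-\ell_1<q_{k+1}$. Hence only the two lengths $\eta$ and $\eta+\eta'$ occur in $\AAA_k(\alpha)$.

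Finally I would recover the multiplicities by a two-by-two linear system. Let $m,m'$ denote the numbers of gaps of length $\eta$ and $\eta+\eta'$ respectively; then $m+m'=q_{k+1}$ and $m\eta+m'(\eta+\eta')=1$. The sign-tracked identity $p_{k+1}q_k-p_kq_{k+1}=(-1)^k$ yields the normalization $q_{k+1}\eta+q_k\eta'=1$, and substituting gives $m=q_{k+1}-q_k$ and $m'=q_k$, as claimed. The main obstacle in this plan is Step~2 above, namely identifying the left neighbour of~$0$ via the asymmetric index $q_{k+1}-q_k$ and carrying out the parity bookkeeping so that the two neighbours are correctly placed on opposite sides of~$0$ with the asserted distances; the remaining steps are then essentially forced.
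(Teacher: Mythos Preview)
The paper does not give its own proof of Lemma~\ref{lem31}; it is stated as the ``strong form'' of the three-distance theorem and attributed to the references \cite{halton65,slater67,sos57,sos58,suranyi58,swierczkowski59}. So your proposal is not competing with an argument in the paper but rather supplying one where the paper simply cites the literature.

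Your overall strategy is sound and the endpoints are correct: the identification of the two neighbours of~$0$ as $\{q_k\alpha\}$ and $\{(q_{k+1}-q_k)\alpha\}$ is right (for the second one your phrase ``best-approximation again'' really means: if some $\{\ell\alpha\}$ with $\ell<q_{k+1}$ lay strictly between $0$ and $\{(q_{k+1}-q_k)\alpha\}$, then $\Vert(\ell+q_k)\alpha\Vert<\eta'$ with $0<\ell+q_k<q_{k+2}$, contradicting best approximation at level~$k+1$ --- it would help to say this explicitly). The multiplicity computation via $q_{k+1}\eta+q_k\eta'=1$ is clean and correct.

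There is, however, a genuine gap in your Step~3. From the bare three-distance theorem you only know that if a third length occurs then the largest of the three equals the sum of the other two. Given that $\eta$ and $\eta+\eta'$ both occur, a hypothetical third length is \emph{either} $\eta'$ \emph{or} $2\eta+\eta'$, and you rule out only the first. The second possibility must also be excluded. One quick fix: with multiplicities $m_1,m_2,m_3$ for $\eta,\eta+\eta',2\eta+\eta'$, the constraints $m_1+m_2+m_3=q_{k+1}$ and $m_1\eta+m_2(\eta+\eta')+m_3(2\eta+\eta')=1=q_{k+1}\eta+q_k\eta'$ together with the $\Qq$-linear independence of $\eta,\eta'$ force $m_3=0$. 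Alternatively, invoke the sharper form of the three-gap theorem which says that only two lengths occur precisely when the indices of the two neighbours of~$0$ sum to~$N$; here $q_k+(q_{k+1}-q_k)=q_{k+1}=N$, so the two-length case is immediate. Either patch completes your argument.
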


For every $i=1,\ldots,q_{k+1}-1$, consider the short special interval
\begin{equation}\label{eq3.4}
J_k(i)=J_k(\alpha;i)=\left(\{i\alpha\}-\frac{\Vert q_k\alpha\Vert}{2},\{i\alpha\}+\frac{\Vert q_k\alpha\Vert}{2}\right),
\end{equation}
with length
\textcolor{white}{xxxxxxxxxxxxxxxxxxxxxxxxxxxxxx}
\begin{displaymath}
\vert J_k(\alpha;i)\vert=\Vert q_k\alpha\Vert.
\end{displaymath}
In view of $\Vert q_k\alpha\Vert+\Vert q_{k+1}\alpha\Vert<2\Vert q_k\alpha\Vert$, a simple consequence of \eqref{eq3.2},
it is clear that these intervals (i) are pairwise disjoint, (ii) are contained in the open interval $(0,1)$,
and (iii) have total length greater than~$1/3$.

The irrational number $\beta$ has infinite continued fraction
\begin{equation}\label{eq3.5}
\beta=[b_0;b_1,b_2,b_3,\ldots]=b_0+\frac{1}{b_1+\frac{1}{b_2+\frac{1}{b_3+\ldots}}},
\end{equation}
where $b_0$ is a non-negative integer and $b_1,b_2,b_3,\ldots$ are positive integers.
For any $h=1,2,3,\ldots,$ the $h$-th convergent
\begin{displaymath}
\frac{p'_h}{q'_h}=\frac{p'_h(\beta)}{q'_h(\beta)}=[b_0;b_1,b_2,b_3,\ldots,b_h],
\end{displaymath}
where the integers $p'_h$ and $q'_h$ are coprime, is defined in terms of a finite initial segment of the continued fraction \eqref{eq3.5},
with denominator $q'_h=q'_h(\beta)$.

For any integer $h\ge1$, let $\BBB_h(\beta)$ be the partition of $[0,1)$ with $q'_{h+1}$ division points
\begin{equation}\label{eq3.6}
\{\ell\beta\},
\quad
\ell=0,1,2,\ldots,q'_{h+1}-1.
\end{equation}
In view of Lemma~\ref{lem31}, the distance between any pair of neighboring partition points \eqref{eq3.6} of $\BBB_h(\beta)$
can only take the values
\begin{equation}\label{eq3.7}
\Vert q'_h\beta\Vert
\quad\mbox{and}\quad
\Vert q'_h\beta\Vert+\Vert q'_{h+1}\beta\Vert.
\end{equation}

For every integer $j=1,\ldots,q'_{h+1}-1$, consider the short special interval
\begin{equation}\label{eq3.8}
J'_h(j)=J'_h(\beta;j)=\left(\{j\beta\}-\frac{\Vert q'_h\beta\Vert}{2},\{j\beta\}+\frac{\Vert q'_h\beta\Vert}{2}\right),
\end{equation}
with length
\textcolor{white}{xxxxxxxxxxxxxxxxxxxxxxxxxxxxxx}
\begin{displaymath}
\vert J'_h(\beta;j)\vert=\Vert q'_h\beta\Vert.
\end{displaymath}
It is clear that these intervals (i) are pairwise disjoint, (ii) are contained in the open interval $(0,1)$,
and (iii) have total length greater than~$1/3$.

Using the short special intervals above, we construct some small special rectangles.

Let the denominator $q_k=q_k(\alpha)$ of the $k$-th convergent of $\alpha$ be \textit{large}, and let the denominator
$q'_h=q'_h(\beta)$ of the $h$-th convergent of $\beta$ be \textit{substantially larger}, both to be made precise later.

The faces $Y_1,Y_2,Y_3$ of the L-solid translation $3$-manifold have bottom left vertex
\begin{displaymath}
\bfp_1=(0,1,0),
\quad
\bfp_2=(0,0,0),
\quad
\bfp_3=(1,0,0)
\end{displaymath}
respectively.
For every $\sigma=1,2,3$ and every $j=1,\ldots,q'_{h+1}-q_{k+1}$, we consider the small special rectangle
\begin{displaymath}
R_{\sigma,k,h}(\alpha;1;\beta;j)=\{\bfp_\sigma+(x,0,z)\in Y_\sigma:(x,z)\in J_k(\alpha;1)\times J'_h(\beta;j)\}
\end{displaymath}
on the face~$Y_\sigma$.
For every such $\sigma$ and~$j$, and for every $i=0,1,\ldots,q_{k+1}-2$, we also consider the image of this small special rectangle
under $T_{\alpha,\beta}^i$, given by
\begin{equation}\label{eq3.9}
R^\dagger_{\sigma,k,h}(\alpha;1+i;\beta;j+i)=T_{\alpha,\beta}^i(R_{\sigma,k,h}(\alpha;1;\beta;j))
\end{equation}
which is not necessarily in~$Y_\sigma$ but can be anywhere in~$\YYY$, and is \textit{splitting free}, in view of the properties
(i)--(iii) of the short special intervals.
In particular, the lack of splitting implies that they are congruent rectangles, with the same area
\begin{equation}\label{eq3.10}
\lambda_2(R^\dagger_{\sigma,k,h}(\alpha;1+i;\beta;j+i))=\lambda_1(J_k(\alpha;1))\lambda_1(J'_h(\beta;1)),
\end{equation}
where $\lambda_1$ and $\lambda_2$ denote respectively $1$- and $2$-dimensional Lebesgue measures.
Thus they are also small special rectangles.

Next, recall from Section~\ref{sec1} and \eqref{eq1.9} that the underlying set $\YYY$ has a non-trivial decomposition
into a disjoint union $\YYY=\WWW\cup\SSS$ of $T_{\alpha,\beta}$-invariant measurable subsets $\WWW$ and~$\SSS$,
with $\lambda_2(\WWW)=1$ and $\lambda_2(\SSS)=2$.
Let $\eps>0$ be arbitrarily small but fixed.
Then there exists a subset $\WWW_1=\WWW_1(\eps;\WWW)\subset\YYY$ which is a finite union of disjoint rectangles
and such that the symmetric difference $\WWW\bigtriangleup\WWW_1=(\WWW\setminus\WWW_1)\cup(\WWW_1\setminus\WWW)$
has measure
\textcolor{white}{xxxxxxxxxxxxxxxxxxxxxxxxxxxxxx}
\begin{equation}\label{eq3.11}
\lambda_2(\WWW\bigtriangleup\WWW_1)<\eps.
\end{equation}
Note also that the $\YYY$-complement $\WWW_1^{\,c}=\YYY\setminus\WWW_1$ of $\WWW_1$ satisfies
\begin{equation}\label{eq3.12}
\WWW\bigtriangleup\WWW_1=\SSS\bigtriangleup\WWW_1^{\,c}.
\end{equation}

For any pair $(\sigma,j)$ with $\sigma=1,2,3$ and $j=1,\ldots,q'_{h+1}-q_{k+1}$, we call the collection
\begin{equation}\label{eq3.13}
R^\dagger_{\sigma,k,h}(\alpha;1+i;\beta;j+i),
\quad
i=0,1,\ldots,q_{k+1}-2,
\end{equation}
of small special rectangles the $T_{\alpha,\beta}$-power chain of the pair $(\sigma,j)$.
Let $\delta=\delta(\eps)>0$ be fixed.
We say that this $T_{\alpha,\beta}$-power chain \eqref{eq3.13} is \textit{defective} if every member of the chain is \textit{bad}
in the sense that at least one of the following two properties holds:

(1)
The member intersects the boundary of a rectangle of the set~$\WWW_1$.

(2)
The inequality
\begin{equation}\label{eq3.14}
\frac{\lambda_2(R^\dagger_{\sigma,k,h}(\alpha;1+i;\beta;j+i)\cap(\WWW\bigtriangleup\WWW_1))}
{\lambda_2(R^\dagger_{\sigma,k,h}(\alpha;1+i;\beta;j+i))}
\ge\delta
\end{equation}
holds.

To show that defective $T_{\alpha,\beta}$-power chains form a small minority, 
let $\Delta$ denote the total number of pairs $(\sigma,j)$ satisfying $\sigma=1,2,3$ and $j=1,\ldots,q'_{h+1}-q_{k+1}$
such that the $T_{\alpha,\beta}$-power chain \eqref{eq3.13} is defective.
Then the total number $\Delta(q_{k+1}-1)$ of small special rectangles \eqref{eq3.9} with
\begin{equation}\label{eq3.15}
\sigma=1,2,3,
\quad
i=0,1,\ldots,q_{k+1}-2,
\quad
j=1,\ldots,q'_{h+1}-q_{k+1}
\end{equation}
in defective $T_{\alpha,\beta}$-power chains satisfies
\begin{equation}\label{eq3.16}
\Delta(q_{k+1}-1)\le\Lambda+\Omega,
\end{equation}
where $\Lambda$ denotes the total number of small special rectangles \eqref{eq3.9} with \eqref{eq3.15}
that intersect the boundary of a rectangle in the set~$\WWW_1$,
and $\Omega$ denotes the total number of small special rectangles \eqref{eq3.9} with \eqref{eq3.15}
such that the inequality \eqref{eq3.14} holds.

Consider a face $Y_\sigma$, where a typical point is of the form $\bfp_\sigma+(x,0,z)$.
Note that the intervals $J_k(\alpha;1+i)$, $i=0,1,\ldots,q_{k+1}-2$, are disjoint, and for each fixed~$i$,
the intervals $J'_h(\beta;j+i)$, $j=1,\ldots,q'_{h+1}-q_{k+1}$, are disjoint.
It follows that on~$Y_\sigma$, any line segment in the $x$-direction intersects less than $q_{k+1}$ of the small special rectangles,
while any line segment in the $z$-direction intersects less than $q'_{h+1}$ of the small special rectangles.
This means that the boundary of any rectangle in the set $\WWW_1$ intersects less than $2(q_{k+1}+q'_{h+1})$ small special rectangles.
Since $\WWW_1$ contains only finitely many rectangles, it follows that there exists a constant $c_1=c_1(\eps;\WWW)$
such that
\textcolor{white}{xxxxxxxxxxxxxxxxxxxxxxxxxxxxxx}
\begin{equation}\label{eq3.17}
\Lambda\le c_1(q_{k+1}+q'_{h+1}).
\end{equation}

Next, note that properties (i)--(iii) imply the trivial bounds
\begin{displaymath}
\lambda_1(J_k(\alpha;1))\ge\frac{1}{3(q_{k+1}-1)}
\quad\mbox{and}\quad
\lambda_1(J'_k(\beta;1))\ge\frac{1}{3(q'_{h+1}-1)},
\end{displaymath}
so that
\textcolor{white}{xxxxxxxxxxxxxxxxxxxxxxxxxxxxxx}
\begin{equation}\label{eq3.18}
\lambda_2(R^\dagger_{\sigma,k,h}(\alpha;1+i;\beta;j+i))
\ge\frac{1}{9(q_{k+1}-1)(q'_{h+1}-1)}.
\end{equation}
Combining \eqref{eq3.11}, \eqref{eq3.14} and \eqref{eq3.18}, we deduce that
\begin{displaymath}
\eps>\lambda_2(\WWW\bigtriangleup\WWW_1)\ge\frac{\Omega\delta}{9(q_{k+1}-1)(q'_{h+1}-1)}.
\end{displaymath}
This inequality can be simplified to
\begin{equation}\label{eq3.19}
\Omega<\eps^{1/2}(q_{k+1}-1)(q'_{h+1}-1)
\end{equation}
if we choose the parameter $\delta$ to satisfy
\begin{equation}\label{eq3.20}
\delta=\delta(\eps)=9\eps^{1/2}.
\end{equation}

Combining \eqref{eq3.16}, \eqref{eq3.17} and \eqref{eq3.19}, we deduce that
\begin{displaymath}
\Delta<\frac{c_1(q_{k+1}+q'_{h+1})}{q_{k+1}-1}+\eps^{1/2}(q'_{h+1}-1).
\end{displaymath}
Since there are precisely $3(q'_{h+1}-q_{k+1})$ $T_{\alpha,\beta}$-power chains, it then follows that
as long as \eqref{eq3.20} holds, the proportion of defective $T_{\alpha,\beta}$-power chains is at most
\begin{equation}\label{eq3.21}
\frac{c_1(q_{k+1}+q'_{h+1})}{3(q'_{h+1}-q_{k+1})(q_{k+1}-1)}+\frac{\eps^{1/2}(q'_{h+1}-1)}{3(q'_{h+1}-q_{k+1})},
\end{equation}

We now repeat the argument for the reverse flow.

For any integer $k\ge1$, we consider the partition $\AAA_k(-\alpha)$ of $[0,1)$ with $q_{k+1}$ division points
\textcolor{white}{xxxxxxxxxxxxxxxxxxxxxxxxxxxxxx}
\begin{displaymath}
\{-\ell\alpha\},
\quad
\ell=0,1,2,\ldots,q_{k+1}-1.
\end{displaymath}
For every $i=1,\ldots,q_{k+1}-1$, we consider the special short interval $J_k(\alpha;-i)$, defined by replacing $i$ by $-i$
in \eqref{eq3.4}.

For every $\sigma=1,2,3$ and every $j=q_{k+1}+1,\ldots,q'_{h+1}$, we consider the small special rectangle
\begin{displaymath}
R_{\sigma,k,h}(\alpha;-1;\beta;j-2)
=\{\bfp_\sigma+(x,0,z)\in Y_\sigma:(x,z)\in J_k(\alpha;-1)\times J'_h(\beta;j-2)\}
\end{displaymath}
on the face~$Y_\sigma$.
For every $\sigma$ and~$j$, and for every $i=0,1,\ldots,q_{k+1}-2$, we also consider the image of this small special rectangle
under $T_{\alpha,\beta}^{-i}$, given by
\begin{displaymath}
R^\dagger_{\sigma,k,h}(\alpha;-1-i;\beta;j-2-i)=T_{\alpha,\beta}^{-i}(R_{\sigma,k,h}(\alpha;-1;\beta;j-2))
\end{displaymath}
which is not necessarily in~$Y_\sigma$ but can be anywhere in~$\YYY$, and is splitting free.
Thus these are also small special rectangles, with the same area given by \eqref{eq3.10}.

As before, for any pair $(\sigma,j)$ with $\sigma=1,2,3$ and $j=q_{k+1}+1,\ldots,q'_{h+1}$, we say that
the $T_{\alpha,\beta}^{-1}$-power chain of the pair $(\sigma,j)$, given by
\begin{equation}\label{eq3.22}
R^\dagger_{\sigma,k,h}(\alpha;-1-i;\beta;j-2-i),
\quad
i=0,1,\ldots,q_{k+1}-2,
\end{equation}
is defective if every member of the chain is bad in the sense that at least one of the following two properties holds:

(1)
The member intersects the boundary of a rectangle of the set~$\WWW_1$.

(2)
The inequality
\begin{displaymath}
\frac{\lambda_2(R^\dagger_{\sigma,k,h}(\alpha;-1-i;\beta;j-2-i)\cap(\WWW\bigtriangleup\WWW_1))}
{\lambda_2(R^\dagger_{\sigma,k,h}(\alpha;-1-i;\beta;j-2-i))}
\ge\delta
\end{displaymath}
holds, where $\delta$ is defined as before.

An analogous argument then shows that the proportion of defective $T_{\alpha,\beta}^{-1}$-power chains is at most \eqref{eq3.21}.

\begin{lemma}\label{lem32}
Suppose that the proportion \eqref{eq3.21} both of defective $T_{\alpha,\beta}$-power chains \eqref{eq3.13}
and of defective $T_{\alpha,\beta}^{-1}$-power chains \eqref{eq3.22} is sufficiently small.
Then there exists an integer $j_0$ satisfying $q_{k+1}+1\le j_0\le q'_{h+1}-q_{k+1}$
such that for every $\sigma=1,2,3$,

\emph{(i)}
the $T_{\alpha,\beta}$-power chain \eqref{eq3.13} with $j=j_0$ is not defective; and

\emph{(ii)}
the $T_{\alpha,\beta}^{-1}$-power chain \eqref{eq3.22} with $j=j_0$ is not defective.
\end{lemma}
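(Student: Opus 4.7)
The plan is to prove Lemma~\ref{lem32} by a direct union bound (pigeonhole) argument over the six families of power chains in play, namely the three forward families indexed by $\sigma \in \{1,2,3\}$ together with the three backward families.

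First I would isolate the ``bad'' $j$-indices. Let $B^+ \subset \{1,\ldots,q'_{h+1}-q_{k+1}\}$ consist of those $j$ for which the forward power chain \eqref{eq3.13} is defective for at least one $\sigma \in \{1,2,3\}$. Since the count $\Delta$ that leads to \eqref{eq3.21} already sums over the three values of $\sigma$, we clearly have $|B^+| \le \Delta_+$, where $\Delta_+$ satisfies the bound
\begin{equation*}
\Delta_+ < \frac{c_1(q_{k+1}+q'_{h+1})}{q_{k+1}-1}+\eps^{1/2}(q'_{h+1}-1)
\end{equation*}
derived just before \eqref{eq3.21}. Analogously, let $B^- \subset \{q_{k+1}+1,\ldots,q'_{h+1}\}$ be the corresponding bad set for the backward chains \eqref{eq3.22}; the symmetric analysis for the reverse flow, already noted in the text, yields $|B^-| \le \Delta_-$ with identical right-hand side.

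Next I would compare the set of ``doubly bad'' indices in the admissible range $J_0=\{q_{k+1}+1,\ldots,q'_{h+1}-q_{k+1}\}$, whose cardinality is $q'_{h+1}-2q_{k+1}$, with $J_0$ itself. Trivially,
\begin{equation*}
|(B^+ \cup B^-) \cap J_0| \le |B^+| + |B^-| \le \Delta_+ + \Delta_-,
\end{equation*}
and the hypothesis that the proportion \eqref{eq3.21} is sufficiently small is precisely the quantitative statement $\Delta_+ + \Delta_- < q'_{h+1} - 2q_{k+1}$. Hence there exists $j_0 \in J_0$ lying outside $B^+ \cup B^-$. For this $j_0$, and for every $\sigma \in \{1,2,3\}$, neither the forward chain \eqref{eq3.13} nor the backward chain \eqref{eq3.22} with $j=j_0$ is defective, which yields both assertions (i) and (ii) of the lemma simultaneously.

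The only thing to verify is that ``proportion \eqref{eq3.21} sufficiently small'' does indeed translate into $\Delta_+ + \Delta_- < q'_{h+1} - 2q_{k+1}$. Since the ratio $(q'_{h+1}-2q_{k+1})/(q'_{h+1}-q_{k+1})$ tends to $1$ as $q'_{h+1}/q_{k+1} \to \infty$ --- a regime freely available, since $q'_{h+1}$ is chosen substantially larger than $q_{k+1}$ --- it suffices that \eqref{eq3.21} be bounded by, say, $1/12$. This is arranged in the standard order: first choose $\eps$ small to control the $\eps^{1/2}$ term, then $q_{k+1}$ large to control the first term once $c_1=c_1(\eps;\WWW)$ is fixed, and finally $q'_{h+1}\gg q_{k+1}$. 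No substantive obstacle arises beyond this bookkeeping; the proof is purely a counting argument riding on the quantitative estimates already established.
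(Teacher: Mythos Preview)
Your argument is correct and is exactly the counting/pigeonhole step the paper leaves implicit: Lemma~\ref{lem32} is stated without proof in the text, and your union bound over the six families of chains, followed by the observation that the admissible range $J_0$ has cardinality $q'_{h+1}-2q_{k+1}$ while the total number of defective pairs is $\Delta_++\Delta_-$, is the intended reasoning. The parameter discussion at the end (choose $\eps$, then $q_{k+1}$, then $q'_{h+1}$) also matches what the paper does in the paragraph following Lemma~\ref{lem32}.
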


It follows from (i) that for every $\sigma=1,2,3$, there exists an integer $i_0$ satisfying $0\le i_0\le q_{k+1}-2$
such that the small special rectangle
\begin{displaymath}
R^\dagger_{\sigma,k,h}(\alpha;1+i_0;\beta;j_0+i_0)
\end{displaymath}
does not intersect the boundary of a rectangle of the set $\WWW_1$ and also satisfies the inequality
\textcolor{white}{xxxxxxxxxxxxxxxxxxxxxxxxxxxxxx}
\begin{equation}\label{eq3.23}
\frac{\lambda_2(R^\dagger_{\sigma,k,h}(\alpha;1+i_0;\beta;j_0+i_0)\cap(\WWW\bigtriangleup\WWW_1))}
{\lambda_2(R^\dagger_{\sigma,k,h}(\alpha;1+i_0;\beta;j_0+i_0))}
<\delta.
\end{equation}
The former implies that
\begin{equation}\label{eq3.24}
R^\dagger_{\sigma,k,h}(\alpha;1+i_0;\beta;j_0+i_0)\subset\WWW_1
\quad\mbox{or}\quad
R^\dagger_{\sigma,k,h}(\alpha;1+i_0;\beta;j_0+i_0)\subset\WWW_1^c.
\end{equation}
If the first condition in \eqref{eq3.24} holds, then it follows from \eqref{eq3.23} that
\begin{equation}\label{eq3.25}
\frac{\lambda_2(R^\dagger_{\sigma,k,h}(\alpha;1+i_0;\beta;j_0+i_0)\cap\WWW)}
{\lambda_2(R^\dagger_{\sigma,k,h}(\alpha;1+i_0;\beta;j_0+i_0))}
\ge1-\delta.
\end{equation}
If the second condition in \eqref{eq3.24} holds, then it follows from \eqref{eq3.12} and \eqref{eq3.23} that
\begin{equation}\label{eq3.26}
\frac{\lambda_2(R^\dagger_{\sigma,k,h}(\alpha;1+i_0;\beta;j_0+i_0)\cap\SSS)}
{\lambda_2(R^\dagger_{\sigma,k,h}(\alpha;1+i_0;\beta;j_0+i_0))}
\ge1-\delta.
\end{equation}
Since $T_{\alpha,\beta}$ is measure-preserving and the subsets $\WWW$ and $\SSS$ are $T_{\alpha,\beta}$-invariant,
it follows that each condition \eqref{eq3.25} or \eqref{eq3.26}, if true, extends to the whole $T_{\alpha,\beta}$-power chain.
Thus in particular, one of the inequalities
\begin{displaymath}
\frac{\lambda_2(R^\dagger_{\sigma,k,h}(\alpha;1;\beta;j_0)\cap\WWW)}
{\lambda_2(R^\dagger_{\sigma,k,h}(\alpha;1;\beta;j_0))}
\ge1-\delta
\quad\mbox{or}\quad
\frac{\lambda_2(R^\dagger_{\sigma,k,h}(\alpha;1;\beta;j_0)\cap\SSS)}
{\lambda_2(R^\dagger_{\sigma,k,h}(\alpha;1;\beta;j_0))}
\ge1-\delta
\end{displaymath}
holds.
Similarly, it follows from (ii) that one of the inequalities
\begin{displaymath}
\frac{\lambda_2(R^\dagger_{\sigma,k,h}(\alpha;-1;\beta;j_0-2)\cap\WWW)}
{\lambda_2(R^\dagger_{\sigma,k,h}(\alpha;-1;\beta;j_0-2))}
\ge1-\delta
\quad\!\mbox{or}\quad\!
\frac{\lambda_2(R^\dagger_{\sigma,k,h}(\alpha;-1;\beta;j_0-2)\cap\SSS)}
{\lambda_2(R^\dagger_{\sigma,k,h}(\alpha;-1;\beta;j_0-2))}
\ge1-\delta
\end{displaymath}
holds.
For $\sigma=1,2,3$, we now take
\begin{displaymath}
R_\sigma
=R^\dagger_{\sigma,k,h}(\alpha;-1;\beta;j_0-2)
=R_{\sigma,k,h}(\alpha;-1;\beta;j_0-2)
\end{displaymath}
on the face~$Y_\sigma$, and take
\begin{displaymath}
R^*_\sigma
=R^\dagger_{\sigma,k,h}(\alpha;1;\beta;j_0)
=R_{\sigma,k,h}(\alpha;1;\beta;j_0)
\end{displaymath}
on the face~$Y_\sigma$.

To complete the proof of Lemma~\ref{lem12}, it remains to specify the parameters.

To satisfy the condition \eqref{eq1.11} and its analogues, we need $1-\delta>99/100$, so that $\delta<1/100$.
In view of \eqref{eq3.20}, we need the parameter $\eps>0$ to satisfy
\begin{displaymath}
\eps<\frac{1}{810000}.
\end{displaymath}
The parameter $\eps>0$ also needs to be sufficiently small to ensure that the proportion \eqref{eq3.21}
of defective power chains is small.
Note that that we can guarantee that
\begin{displaymath}
\frac{q_{k+1}+q'_{h+1}}{3(q'_{h+1}-q_{k+1})}\le1
\quad\mbox{and}\quad
\frac{q'_{h+1}-1}{3(q'_{h+1}-q_{k+1})}\le1
\end{displaymath}
by ensuring that the two continued fraction denominators $q_{k+1}$ and $q'_{h+1}$ satisfy the condition
$q'_{h+1}\ge c_2q_{k+1}$ for some suitably large absolute constant~$c_2$.
Recall next that the constant $c_1$ depends only on $\eps$ and~$\WWW$.
It follows that we can choose the continued fraction denominator $q_{k+1}$ to be sufficiently large
to ensure that the factor
\begin{displaymath}
\frac{c_1}{q_{k+1}-1}
\end{displaymath}
is as small as we please.
This completes the proof of Lemma~\ref{lem12}.

%
%

\section{Extending ergodicity to unique ergodicity}\label{sec4}

The term \textit{unique ergodicity} refers to the extension from uniformity for half-infinite geodesics with almost every starting point
to uniformity for half-infinite geodesics with every starting point and that do not hit any singularity and become undefined.
The basic ideas and pioneering results are due to Bogoliuboff and Krylov~\cite{BK37} in 1937;
see also Furstenberg \cite[Sections 3.2--3.3]{furstenberg81}.

The proof of Theorem~\ref{thm1} is by contradiction and consists of two parts.
Part~1 is a general argument based on Krylov--Bogoliuboff theory, and the key idea here is to reformulate the problem
in terms of invariant Borel measures and to use functional analysis.
Part~2 is an \textit{ad hoc} argument based on the special property that the $\bfv$-flow in the L-solid translation
$3$-manifold, when reduced \textit{modulo one} to geodesic flow in the unit torus $[0,1)^3$, exhibits uniformity.

For convenience, we denote by $\MMM$ the L-solid translation $3$-manifold.

\begin{part1}
Suppose, on the contrary, that there exist a half-infinite geodesic $\LLL(\bfv;\bfp_0;t)$, $t\ge0$, with arc-length parametization,
where $\bfv\in\Rr^3$ is a Kronecker direction and $\bfp\in\MMM$ is the starting point, and a non-zero continuous function $f_0$
on $\MMM$ for which uniformity fails.
Then the infinite sequence of integrals
\begin{displaymath}
\frac{1}{n}\int_0^nf_0(\LLL(\bfv;\bfp_0;t))\,\dd t,
\quad
n=1,2,3,\ldots,
\end{displaymath}
does not converge to
\textcolor{white}{xxxxxxxxxxxxxxxxxxxxxxxxxxxxxx}
\begin{equation}\label{eq4.1}
\frac{1}{3}\int_\MMM f_0\,\dd\lambda_3,
\end{equation}
where $\frac{1}{3}\lambda_3$ is a probability measure on~$\MMM$,
so that there exists an infinite strictly increasing sequence $h_0<h_1<h_2<h_3<\ldots$ of positive integers such that the limit
\begin{displaymath}
\lim_{n\to\infty}\frac{1}{h_n}\int_0^{h_n}f_0(\LLL(\bfv;\bfp_0;t))\,\dd t
\end{displaymath}
exists but is not equal to \eqref{eq4.1}.

\begin{lemma}\label{lem41}
Under these assumptions, there exists an ergodic measure-preserving system $(\MMM,\BBB,\nu,\mbox{$\bfv$-flow})$,
where $\BBB$ denotes the Borel $\sigma$-algebra in~$\MMM$, and $\nu$ denotes a new $\bfv$-flow-invariant Borel probability measure,
such that
\begin{equation}\label{eq4.2}
\lim_{n\to\infty}\frac{1}{h_n}\int_0^{h_n}f_0(\LLL(\bfv;\bfp_0;t))\,\dd t
=\int_\MMM f_0\,\dd\nu\ne\frac{1}{3}\int_\MMM f_0\,\dd\lambda_3.
\end{equation}
\end{lemma}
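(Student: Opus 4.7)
The plan is to realize $\nu$ as an ergodic component of a weak-$*$ accumulation point of the empirical time averages along the orbit, in the classical manner of Krylov and Bogoliuboff.

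For each $n$, let $\mu_n$ denote the Borel probability measure on the compact metric space $\MMM$ defined, via the Riesz representation theorem, by
\begin{displaymath}
\int_\MMM f\,\dd\mu_n=\frac{1}{h_n}\int_0^{h_n}f(\LLL(\bfv;\bfp_0;t))\,\dd t
\quad\mbox{for every $f\in C(\MMM)$.}
\end{displaymath}
Since $C(\MMM)$ is separable, the unit ball of its dual is weak-$*$ sequentially compact by Banach--Alaoglu, so after passing to a subsequence (which I continue to call $\{h_n\}$) there exists a Borel probability measure $\mu$ on $\MMM$ such that $\mu_n\to\mu$ weak-$*$. Because the limit along $\{h_n\}$ already exists for the specific function $f_0$ by hypothesis, testing $\mu$ against $f_0$ gives $\int_\MMM f_0\,\dd\mu=\lim_{n\to\infty}(1/h_n)\int_0^{h_n}f_0(\LLL(\bfv;\bfp_0;t))\,\dd t$, which differs from $\frac{1}{3}\int_\MMM f_0\,\dd\lambda_3$.

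Next, I verify that $\mu$ is invariant under the $\bfv$-flow $\phi_s$. For each $f\in C(\MMM)$ and $s\in\Rr$, the difference $\int_\MMM(f\circ\phi_s)\,\dd\mu_n-\int_\MMM f\,\dd\mu_n$ telescopes to $(1/h_n)$ times the integral of $f\circ\LLL(\bfv;\bfp_0;\cdot)$ over two intervals of length $|s|$, and is therefore bounded in absolute value by $2|s|\|f\|_\infty/h_n$, which tends to $0$ as $n\to\infty$. Passing to the weak-$*$ limit yields $\phi_s$-invariance of $\mu$ for every $s\in\Rr$, so $(\MMM,\BBB,\mu,\mbox{$\bfv$-flow})$ is a measure-preserving system.

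Finally, I pass from $\mu$ to an ergodic invariant measure. I apply the ergodic decomposition theorem, standard for a continuous flow on a compact metric space: there is a probability space $(\Omega,P)$ and a measurable family $\{\nu_\omega:\omega\in\Omega\}$ of ergodic $\bfv$-flow-invariant Borel probability measures on $\MMM$ with $\mu=\int_\Omega\nu_\omega\,\dd P(\omega)$, and consequently
\begin{displaymath}
\int_\MMM f_0\,\dd\mu=\int_\Omega\int_\MMM f_0\,\dd\nu_\omega\,\dd P(\omega).
\end{displaymath}
Since the left-hand side differs from $\frac{1}{3}\int_\MMM f_0\,\dd\lambda_3$, at least one component $\nu=\nu_\omega$ satisfies $\int_\MMM f_0\,\dd\nu\ne\frac{1}{3}\int_\MMM f_0\,\dd\lambda_3$. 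To match the precise equation in \eqref{eq4.2}, I then apply Birkhoff's ergodic theorem to this $\nu$ to pick a $\nu$-generic point $\bfp^\ast\in\MMM$ along whose $\bfv$-orbit the time averages of $f_0$ converge to $\int_\MMM f_0\,\dd\nu$, and extract a corresponding subsequence of times realizing this convergence. The main delicacy is exactly this last step, namely reconciling a concrete orbital time average with an integral against the abstract ergodic measure; this is the heart of the Krylov--Bogoliuboff method and is the bridge that makes the contradiction in Part~2 possible.
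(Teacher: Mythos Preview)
Your first two steps --- extracting a weak-$*$ limit $\mu$ of the empirical measures along a subsequence of $\{h_n\}$, and verifying $\bfv$-flow-invariance of $\mu$ via the telescoping estimate $|\int f\circ\phi_s\,\dd\mu_n-\int f\,\dd\mu_n|\le 2|s|\Vert f\Vert_\infty/h_n$ --- match the paper exactly.

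The divergence is in how ergodicity is extracted. You invoke the ergodic decomposition $\mu=\int\nu_\omega\,\dd P(\omega)$ and select a component $\nu=\nu_\omega$ with $\int f_0\,\dd\nu\ne\frac{1}{3}\int f_0\,\dd\lambda_3$. The paper instead restricts to the compact convex set $\frakM_1$ of $\bfv$-flow-invariant probability measures $\mu'$ satisfying the \emph{affine} constraint $\int f_0\,\dd\mu'=\int f_0\,\dd\mu$, and applies Krein--Millman to choose an extremal point $\nu^*\in\frakM_1$. The point of the paper's manoeuvre is that $\nu^*\in\frakM_1$ forces the \emph{equality} $\int f_0\,\dd\nu^*=\lim_n h_n^{-1}\int_0^{h_n}f_0(\LLL(\bfv;\bfp_0;t))\,\dd t$ in \eqref{eq4.2} to persist automatically. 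Your ergodic-decomposition route only delivers the inequality $\int f_0\,\dd\nu\ne\frac{1}{3}\int f_0\,\dd\lambda_3$, since an individual ergodic component need not have the same $f_0$-integral as $\mu$.

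Your final paragraph, which tries to restore the equality by passing to a $\nu$-generic point $\bfp^*$ and a fresh subsequence of times, does not achieve what you want: equation \eqref{eq4.2} is a statement about the \emph{fixed} starting point $\bfp_0$ and the \emph{fixed} sequence $\{h_n\}$, and replacing either changes the assertion. So as a proof of the lemma \emph{as stated} there is a genuine gap. That said, if you read ahead to Part~2 you will see that only the inequality $\int f_0\,\dd\nu\ne\frac{1}{3}\int f_0\,\dd\lambda_3$ is ever invoked (cf.\ \eqref{eq4.6} with $\nu^*$ in place of $\nu_\infty$), so your weaker conclusion already suffices for Theorem~\ref{thm1}; the equality in \eqref{eq4.2} is not actually used downstream.
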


\begin{proof}
We introduce, for every integer $n=1,2,3,\ldots,$ the particular normalized
\textit{length-counting measure} $\nu_n$ on Borel sets $B$ in $\MMM$ defined by
\begin{displaymath}
\nu_n(B)=\frac{1}{h_n}\int_0^{h_n}\chi_B(\LLL(\bfv;\bfp_0;t))\,\dd t,
\end{displaymath}
where the characteristic function $\chi_B$ satisfies
\begin{displaymath}
\chi_B(\bfp)=\left\{\begin{array}{ll}
1,&\mbox{if $\bfp\in B$},\\
0,&\mbox{if $\bfp\not\in B$}.
\end{array}\right.
\end{displaymath}
We then apply a general theorem in functional analysis which says that the space of Borel probability measures
on any compact set is compact in the weak-star topology, which means that for every continuous function $f$
on the compact space, as $n\to\infty$,
\begin{equation}\label{eq4.3}
\mu_n\to\mu
\quad\mbox{if and only if}\quad
\int f\,\dd\mu_n\to\int f\,\dd\mu.
\end{equation}

\begin{remark}
This compactness theorem is a non-trivial result.
The standard proof is based on the Riesz representation theorem
and can be found in most textbooks on functional analysis.
\end{remark}

Thus the space $\frakM=\frakM(\MMM)$ of Borel probability measures on $\MMM$ is compact,
so there exists a subsequence $\nu_{n_m}$ of the sequence $\nu_n$ and a Borel probability measure $\nu_\infty$
such that $\nu_{n_m}\to\nu_\infty$ as $m\to\infty$, and it follows from \eqref{eq4.3} that
\begin{equation}\label{eq4.4}
\lim_{m\to\infty}\int_\MMM f\,\dd\nu_{n_m}=\int_\MMM f\,\dd\nu_\infty
\quad\mbox{for every continuous function $f$ on $\MMM$}.
\end{equation}

Let $t_1>0$ be arbitrary but fixed.
For any continuous function $f$ defined on~$\MMM$, let $f_{t_1}$ denote the function obtained from $f$ writing $f(t+t_1)=f_{t_1}(t)$.
Then
\begin{align}
\int_\MMM f_{t_1}\,\dd\nu_{n_m}
&
=\frac{1}{h_{n_m}}\int_0^{h_{n_m}}f_{t_1}(\LLL(\bfv;\bfp_0;t))\,\dd t
=\frac{1}{h_{n_m}}\int_0^{h_{n_m}}f(\LLL(\bfv;\bfp_0;t+t_1))\,\dd t
\nonumber
\\
&
=\frac{1}{h_{n_m}}\int_0^{h_{n_m}}f(\LLL(\bfv;\bfp_0;t))\,\dd t
\nonumber
\\
&\qquad
+\frac{1}{h_{n_m}}\int_0^{t_1}(f(\LLL(\bfv;\bfp_0;t+h_{n_m}))-f(\LLL(\bfv;\bfp_0;t)))\,\dd t
\nonumber
\\
&
=\int_\MMM f\,\dd\nu_{n_m}
+E(n_m),
\nonumber
\end{align}
where
\textcolor{white}{xxxxxxxxxxxxxxxxxxxxxxxxxxxxxx}
\begin{displaymath}
\vert E(n_m)\vert\le\frac{2t_1\sup\vert f\vert}{h_{n_m}}.
\end{displaymath}
Since $f$ is continuous on the compact set~$\MMM$, the factor $\sup\vert f\vert$ exists and is finite.
Thus $E(n_m)\to0$ as $m\to\infty$, so that
\begin{equation}\label{eq4.5}
\lim_{m\to\infty}\int_\MMM f_{t_1}\,\dd\nu_{n_m}=\lim_{m\to\infty}\int_\MMM f\,\dd\nu_{n_m}.
\end{equation}
Combining \eqref{eq4.4} and \eqref{eq4.5}, we conclude that
\begin{displaymath}
\int_\MMM f_{t_1}\,\dd\nu_\infty=\int_\MMM f\,\dd\nu_\infty,
\end{displaymath}
and this establishes the $\bfv$-flow-invariance of the limit measure $\nu_\infty$.
Furthermore, combining \eqref{eq4.2} and \eqref{eq4.4} with $f=f_0$, we conclude that
\begin{equation}\label{eq4.6}
\int_\MMM f_0\,\dd\nu_\infty\ne\frac{1}{3}\int_\MMM f_0\,\dd\lambda_3.
\end{equation}
This completes the proof.
\end{proof}

Let $\frakM_1\subset\frakM$ denote the set of Borel probability measures $\mu$ on $\MMM$ that are $\bfv$-flow-invariant
and satisfy the requirement
\begin{displaymath}
\int_\MMM f_0\,\dd\mu=\int_\MMM f_0\,\dd\nu_\infty.
\end{displaymath}
Then $\nu_\infty\in\frakM_1$, so that $\frakM_1$ is a non-empty closed subset of~$\frakM$, and so $\frakM_1$ is compact.

With some appropriate $\nu^*\in\frakM_1$, we can extend the above to a measure-preserving system
$(\MMM,\BBB,\nu^*,\mbox{$\bfv$-flow})$ which is ergodic.
To prove this, we use the almost trivial fact that $\frakM_1$ is convex.
The general Krein--Millman theorem in functional analysis implies that the non-empty compact and convex set $\frakM_1$
is spanned by its extremal points.
It is a well known result in ergodic theory that the extremal points in the space of measures are the ergodic measures;
see Furstenberg \cite[Proposition~3.4]{furstenberg81}.
Thus we can choose the measure $\nu^*\in\frakM_1$ above to be such an extremal point.
\end{part1}

\begin{part2}
Consider the measure-preserving system $(\MMM,\BBB,\nu^*,\mbox{$\bfv$-flow})$ which is ergodic.
We now apply the flow version of the Birkhoff ergodic theorem.
For a continuous function $f$ on $\MMM$ and $\nu^*$-almost every starting point $\bfp\in\MMM$, we have
\begin{equation}\label{eq4.7}
\lim_{n\to\infty}\frac{1}{n}\int_0^nf(\LLL(\bfv;\bfp;t))\,\dd t=\int_\MMM f\,\dd\nu^*.
\end{equation}

We return to the continuous function $f_0$ at the beginning of Part 1 for which uniformity fails, and consider the Borel set
\begin{equation}\label{eq4.8}
Y=\left\{\bfp\in\MMM:\lim_{n\to\infty}\frac{1}{n}\int_0^nf_0(\LLL(\bfv;\bfp;t))\,\dd t=\int_\MMM f_0\,\dd\nu^*\right\}.
\end{equation}
Then it follows from \eqref{eq4.7} and \eqref{eq4.8} that
\begin{equation}\label{eq4.9}
\nu^*(Y)=1.
\end{equation}
Meanwhile, the weaker form of Theorem~\ref{thm1} gives uniformity for $\lambda_3$-almost every
starting point $\bfp\in\MMM$, and so
\begin{equation}\label{eq4.10}
\lim_{n\to\infty}\frac{1}{n}\int_0^nf_0(\LLL(\bfv;\bfp;t))\,\dd t=\frac{1}{3}\int_\MMM f_0\,\dd\lambda_3.
\end{equation}
Combining \eqref{eq4.6} with $\nu^*$ replacing $\nu_\infty$, \eqref{eq4.7} in the special case $f=f_0$ and \eqref{eq4.10},
we conclude that
\textcolor{white}{xxxxxxxxxxxxxxxxxxxxxxxxxxxxxx}
\begin{displaymath}
\lambda_3(Y)=0.
\end{displaymath}
This implies that for every $\delta>0$, there exists an infinite collection of disjoint axis-parallel rectangular boxes
$\frakR_i$, $i=1,2,3,\ldots,$ such that
\begin{equation}\label{eq4.11}
\sum_{i=1}^\infty\lambda_3(\frakR_i)<\delta
\quad\mbox{and}\quad
Y\subset\bigcup_{i=1}^\infty\frakR_i,
\end{equation}
and it follows from \eqref{eq4.9} that
\textcolor{white}{xxxxxxxxxxxxxxxxxxxxxxxxxxxxxx}
\begin{equation}\label{eq4.12}
\nu^*\left(\bigcup_{i=1}^\infty\frakR_i\right)=1.
\end{equation}
Comparing \eqref{eq4.11} and \eqref{eq4.12}, we see that there exists a positive integer $i_0$ such that
\begin{equation}\label{eq4.13}
\frac{\lambda_3(\frakR_{i_0})}{\nu^*(\frakR_{i_0})}<\delta.
\end{equation}
Clearly, for $\nu^*$-almost every starting point $\bfp\in\MMM$, we have
\begin{equation}\label{eq4.14}
\lim_{n\to\infty}\frac{1}{n}\int_0^n\chi_{\frakR_{i_0}}(\LLL(\bfv;\bfp;t))\,\dd t=\nu^*(\frakR_{i_0}),
\end{equation}
where $\chi_{\frakR_{i_0}}$ is the characteristic function of~$\frakR_{i_0}$.

We next make use of the crucial fact that the $\bfv$-flow in $\MMM$ reduces modulo one to $\bfv$-flow in the unit torus $[0,1)^3$.
Since $\MMM$ consists of $3$ cubes, every point in the unit torus $[0,1)^3$ has $3$ pre-images in~$\MMM$.
We can clearly assume that $\frakR_{i_0}$ is contained in one of these $3$ cubes.
Write $\frakR_{i_0}=\frakR(1)$, and then let $\frakR(2)$ and $\frakR(3)$ denote axis-parallel rectangular boxes
in the other $2$ cubes with the property that $\frakR(1),\frakR(2),\frakR(3)$ reduce modulo one to the same
axis-parallel rectangular box in the unit torus $[0,1)^3$.

Since $\bfv$ is a Kronecker direction, the continuous form of the Kronecker--Weyl equidistribution theorem applies for
geodesic flow in direction $\bfv$ in the unit torus $[0,1)^3$.
It follows that
\begin{equation}\label{eq4.15}
\lim_{n\to\infty}\frac{1}{n}\sum_{i=1}^3\int_0^n\chi_{\frakR(i)}(\LLL(\bfv;\bfp;t))\,\dd t
=\lambda_3(\frakR(1)\cup\frakR(2)\cup\frakR(3))
=3\lambda_3(\frakR_{i_0})
\end{equation}
holds for every starting point $\bfp\in\MMM$ where the half-infinite geodesic does not hit a singularity.

Recall that \eqref{eq4.14} holds for $\nu^*$-almost every starting point $\bfp\in\MMM$.
Let $\bfp^*$ be such a starting point.
Combining \eqref{eq4.13} and \eqref{eq4.14} gives
\begin{equation}\label{eq4.16}
\frac{\lambda_3(\frakR_{i_0})}{\delta}
<\lim_{n\to\infty}\frac{1}{n}\int_0^n\chi_{\frakR_{i_0}}(\LLL(\bfv;\bfp^*;t))\,\dd t.
\end{equation}
On the other hand, the right hand side of \eqref{eq4.16} can be bounded from above by using \eqref{eq4.15}
with $\bfp=\bfp^*$, and we have
\begin{displaymath}
\lim_{n\to\infty}\frac{1}{n}\int_0^n\chi_{\frakR_{i_0}}(\LLL(\bfv;\bfp^*;t))\,\dd t\le3\lambda_3(\frakR_{i_0}).
\end{displaymath}
Choosing $\delta=1/3$ clearly leads to a contradiction, and this completes the proof of Theorem~\ref{thm1}.
\end{part2}

%
%

\section{Proof of Theorem~\ref{thm2}}\label{sec5}

The proof of Theorem~\ref{thm2} is an adaptation of the proof of Theorem~\ref{thm1}, where the only extra idea is one more
application of ergodicity.

The geodesic flow in $\PPP\times[0,1)$ under consideration is $\bfv$-flow, with $\bfv=(\alpha,1,\beta)$, where
$\alpha$, $\beta$ and $\alpha/\beta$ are all irrational.
For simplicity, we assume that $\alpha>0$ and $\beta>0$, and like before, we further assume that $\alpha<1$.

For the $\sigma$-th atomic cube of $\PPP\times[0,1)$, where $\sigma=1,\ldots,d$, we denote by $X_\sigma$ and $Y_\sigma$
the left $X$-face and the bottom $Y$-face respectively,
and denote by $E_\sigma$ the common edge of the right $X$-face and the top $Y$-face, as shown in Figure~5.1.

\begin{displaymath}
\begin{array}{c}
\includegraphics[scale=0.8]{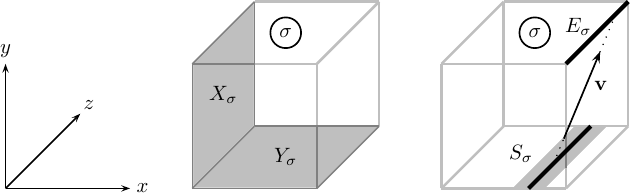}
\vspace{3pt}\\
\mbox{Figure 5.1: the faces $X_\sigma,Y_\sigma$ and the edge $E_\sigma$ of the $\sigma$-th atomic cube,}
\\
\mbox{and the narrow strip $S_\sigma$ on $Y_\sigma$}
\end{array}
\end{displaymath}

We consider the discretization of the $\bfv$-flow in polycube translation $3$-manifold $\PPP\times[0,1)$ relative to the
$Y$-faces, where the flow hits the faces $Y_1,\ldots,Y_d$.
Let
\begin{equation}\label{eq5.1}
T_{\alpha,\beta}:\YYY=Y_1\cup\ldots\cup Y_d\to\YYY
\end{equation}
denote the relevant discrete transformation defined by consecutive hitting points.
Note that the projection of the $\bfv$-flow to the $xy$-plane gives rise to a flow on the polysquare translation surface $\PPP$
with slope~$\alpha^{-1}$.

We need to prove that $T_{\alpha,\beta}$ is ergodic.
As before, we proceed by contradiction.
Assume, on the contrary, that $T_{\alpha,\beta}$ is not ergodic.
Then there exist two $T_{\alpha,\beta}$-invariant subsets $\WWW$ (white) and $\SSS$ (silver)
such that $\YYY=\WWW\cup\SSS$ and
\begin{equation}\label{eq5.2}
1\le\lambda_2(\WWW)\le\lambda_2(\SSS)\le d-1,
\end{equation}
where $\lambda_2$ denotes the $2$-dimensional Lebesgue measure.
We show that this leads to a contradiction.

Again, we consider the multiplicity functions $f_\WWW$ and $f_\SSS$ of the invariant sets $\WWW$ and $\SSS$ respectively.
More precisely, consider the projection of $\YYY$ to the unit torus $[0,1)^2$, given by \eqref{eq1.3}.
Then for every point $P\in[0,1)^2$, there are precisely $d$ points $P_1,\ldots,P_d\in\YYY$ which have projection image~$P$.
Let
\begin{equation}\label{eq5.3}
f_\WWW(P)=\vert\{P_1,\ldots,P_d\}\cap\WWW\vert
\quad\mbox{and}\quad
f_\SSS(P)=\vert\{P_1,\ldots,P_d\}\cap\SSS\vert
\end{equation}
denote the number of these $d$ points that fall into $\WWW$ and $\SSS$ respectively, so that
\begin{equation}\label{eq5.4}
f_\WWW(P)+f_\SSS(P)=d
\quad
\mbox{for almost all $P\in[0,1)^2$}.
\end{equation}
We also consider the corresponding projection of
\begin{displaymath}
T_{\alpha,\beta}:\YYY\to\YYY
\quad\mbox{to}\quad
T_0:[0,1)^2\to[0,1)^2,
\end{displaymath}
which is simply the $(\alpha,\beta)$-shift on the unit torus $[0,1)^2$.

For each $\sigma=1,\ldots,d$, there is a line segment on the face $Y_\sigma$ which is the image under $T_{\alpha,\beta}^{-1}$
of the edge $E_\sigma$ of $\PPP\times[0,1)$ of the $\sigma$-th atomic cube.
It is possible that this edge $E_\sigma$ is a singular edge of $\PPP\times[0,1)$.
Consider a narrow strip neighbourhood $S_\sigma$ of this line segment, as illustrated in light gray in the picture on the right in Figure~5.1.
As in the proof of Lemma~\ref{lem12}, we can think of the $x$-coordinate of every point in $S_\sigma$
to fall modulo one into an interval of the type
\begin{displaymath}
J_k(\alpha;-1)=\left(\{-\alpha\}-\frac{\Vert q_k\alpha\Vert}{2},\{-\alpha\}+\frac{\Vert q_k\alpha\Vert}{2}\right).
\end{displaymath}
In other words, if $\bfp_\sigma$ represents the bottom left vertex of the face $Y_\sigma$ of the translation $3$-manifold~$\MMM$, then
\begin{displaymath}
S_\sigma=\{\bfp_\sigma+(x,0,z)\in Y_\sigma:(x,z)\in J_k(\alpha;-1)\times[0,1)\}.
\end{displaymath}

It is straightforward to extend Lemma~\ref{lem12} to the case of a polycube translation $3$-manifold
which is the cartesian product of a polysquare translation surface with $d$ faces with the unit torus.
However, to establish Theorem~\ref{thm2}, we need something stronger than this extension,
and this is facilitated by making a small change to the definition of the small special rectangles.

For every integer $j=1,\ldots,q'_{h+1}-1$, we replace the short special interval \eqref{eq3.8} by the possibly longer interval
\begin{equation}\label{eq5.5}
J'_h(j)=J'_h(\beta;j)
=\left(\{j\beta\}-\frac{\frakd_h(\beta;j;-)}{2},\{j\beta\}+\frac{\frakd_h(\beta;j;+)}{2}\right),
\end{equation}
where $\frakd(\beta;j;-)$ and $\frakd(\beta;j;+)$ denote the distances of $\{j\beta\}$ to its immediate left neighbour
and to its immediate right neighbour respectively in $\BBB_h(\beta)$.
Clearly each of $\frakd(\beta;j;-)$ and $\frakd(\beta;j;+)$ has value equal to one of the values in \eqref{eq3.7}.

The small special rectangles $R_{\sigma,k,h}$ and $R^\dagger_{\sigma,k,h}$ as well as the $T_{\alpha,\beta}$-power chains
and the $T^{-1}_{\alpha,\beta}$-power chains are then defined with the modification \eqref{eq5.5} in place.
Note in particular that this modification is in the $z$-direction which is \textit{integrable},
so all the small special rectangles and their images remain splitting free as before.

Let $I\subset[0,1)$ be an interval of the form \eqref{eq5.5}, and let
\begin{equation}\label{eq5.6}
R_\sigma=\{(x,y,z)\in S_\sigma:z\in I\},
\quad
\sigma=1,\ldots,d,
\end{equation}
denote small special rectangles of the decomposition of the strips $S_1,\ldots,S_d$
on the faces $Y_1,\ldots,Y_d$ respectively with the condition that their images on $[0,1)^2$ under the projection
\eqref{eq1.3} coincide, so that they are in the same relative positions on the faces $Y_1,\ldots,Y_d$ respectively.
It is clear that the image of each of $R_1,\ldots,R_d$ under $T_{\alpha,\beta}$ splits along the edges $E_1,\ldots,E_d$
into congruent halves.
For each $\sigma=1,\ldots,d$, the rectangle $R_\sigma$ splits into the left half $R_\sigma^-$ and the right half~$R_\sigma^+$,
and it is clear that their images under $T_{\alpha,\beta}$ may lie on distinct $Y$-faces.
On the other hand, for every $\sigma=1,\ldots,d$, there is a small special rectangle in $Y_\sigma$ of the form
\begin{displaymath}
R^*_\sigma=T_{\alpha,\beta}^2(R_{\rho'}^-)\cup T_{\alpha,\beta}^2(R_{\rho''}^+),
\end{displaymath}
which is the union of the image under $T_{\alpha,\beta}^2$ of the left half $R_{\rho'}^-$ of some small special rectangle $R_{\rho'}$
and the image under $T_{\alpha,\beta}^2$ of the right half $R_{\rho''}^+$ of some small special rectangle~$R_{\rho''}$.
Here the two small special rectangles $R_{\rho'}$ and $R_{\rho''}$ may be identical or different.
However, in view of the restriction imposed on the small special rectangles $R_1,\ldots,R_d$ that their images under
the projection \eqref{eq1.3} coincide, we deduce that the images of the small special rectangles $R^*_1,\ldots,R^*_d$
under the projection \eqref{eq1.3} also coincide, apart possibly from sets of zero measure caused by
the singular edges of $\PPP\times[0,1)$.

\begin{displaymath}
\begin{array}{c}
\includegraphics[scale=0.8]{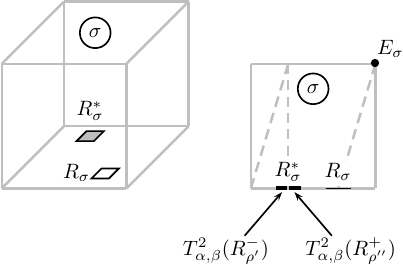}
\vspace{3pt}\\
\mbox{Figure 5.2: the small special rectangles $R_\sigma$ and $R^*_\sigma$ on the face $Y_\sigma$,}
\\
\mbox{and the projectied view with the $z$-coordinate removed}
\end{array}
\end{displaymath}

It is clear that the rectangles $R_1,\ldots,R_d$ and $R^*_1,\ldots,R^*_d$ have the same area.

Given a positive number $\eta>0$, we say that a collection $(R_1,\ldots,R_d)$ of the form \eqref{eq5.6} is $\eta$-nice if
for each $\sigma=1,\ldots,d$, the set $R_\sigma$ satisfies
\begin{displaymath}
\frac{\lambda_2(R_\sigma\cap\WWW)}{\lambda_2(R_\sigma)}>1-\eta
\quad\mbox{or}\quad
\frac{\lambda_2(R_\sigma\cap\SSS)}{\lambda_2(R_\sigma)}>1-\eta,
\end{displaymath}
and the set $R^*_\sigma$ satisfies
\begin{displaymath}
\frac{\lambda_2(R^*_\sigma\cap\WWW)}{\lambda_2(R^*_\sigma)}>1-\eta
\quad\mbox{or}\quad
\frac{\lambda_2(R^*_\sigma\cap\SSS)}{\lambda_2(R^*_\sigma)}>1-\eta.
\end{displaymath}

We have the following stronger form of Lemma~\ref{lem12}.

\begin{lemma}\label{lem51}
Let $\eps>0$ and $\eta>0$ be arbitrarily small and fixed.
Then there exist sufficiently large integers $k$ and $h$ such that for every $\sigma=1,\ldots,d$,
the union of all rectangles $R_{\sigma,k,h}(\alpha;1;\beta;j)$ in the strip $S_\sigma$ arising from $\eta$-nice collections
$(R_1,\ldots,R_d)$ has area at least $(1-\eps)\lambda_2(S_\sigma)$.
\end{lemma}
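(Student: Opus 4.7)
The strategy is to reinforce the argument of Section~\ref{sec3}: instead of merely extracting a single $\eta$-nice collection, we quantify the counting step and show that all but an $\eps$-fraction of the small special rectangles tiling $S_\sigma$ give rise to $\eta$-nice collections. The key new ingredient is the modification \eqref{eq5.5}: because the intervals $J'_h(\beta;j)$ now tile $[0,1)$ up to a null set, the small special rectangles indexed by $j$ genuinely tile the strip $S_\sigma$ up to a null set, so that an area statement rather than a bare existence statement can be derived.

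First we fix an auxiliary parameter $\eps_0=\eps_0(\eps,\eta,d)>0$ to be chosen small at the end, approximate $\WWW$ by a finite disjoint union of rectangles $\WWW_1$ with $\lambda_2(\WWW\bigtriangleup\WWW_1)<\eps_0$, and set the defectiveness threshold $\delta:=\eta$. The counting argument \eqref{eq3.16}--\eqref{eq3.21} of Section~\ref{sec3} passes through verbatim with $\eps$ replaced by $\eps_0$, yielding the same bound on the proportion of defective $T_{\alpha,\beta}$-power chains, and the analogous bound for the reverse flow. By first requiring $\eps_0$ small enough and then choosing $q'_{h+1}\ge c_2q_{k+1}$ with $q_{k+1}$ sufficiently large relative to the constant $c_1=c_1(\eps_0,\WWW)$, this proportion can be forced below any prescribed $\eta_0>0$. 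The crucial propagation property, already exploited in Section~\ref{sec3}, is that a small special rectangle $R$ belonging to a non-defective power chain inherits the dominance $\lambda_2(R\cap\CCC)/\lambda_2(R)>1-\eta$ from a good member of that chain for some $\CCC\in\{\WWW,\SSS\}$, thanks to the $T_{\alpha,\beta}$-invariance of $\WWW$ and $\SSS$ together with the area-preserving property of $T_{\alpha,\beta}$.

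For each fixed index $j$, the collection $(R_1,\ldots,R_d)$ fails to be $\eta$-nice only if at least one of the $2d$ small special rectangles $R_\sigma$ and $R^*_\sigma$, $\sigma=1,\ldots,d$, fails the dominance condition. Each of these $2d$ rectangles lies in some (forward or backward) $T_{\alpha,\beta}$-power chain, and failure of dominance forces the containing chain to be defective. A union bound then shows that the proportion of indices $j$ yielding a non-$\eta$-nice collection is at most $2d\eta_0$; choosing the parameters so that $2d\eta_0<\eps$, we obtain the desired conclusion, since the tiling of $S_\sigma$ by the rectangles together with the fact that a proportion at least $1-\eps$ of the tiles are $\eta$-nice gives a combined area of at least $(1-\eps)\lambda_2(S_\sigma)$.

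The hard part will be the clean identification of each $R^*_\sigma$ as a single small special rectangle sitting inside a definite power chain. In the Section~\ref{sec1} formalism, $R^*_\sigma$ is assembled from two halves of images under $T^2_{\alpha,\beta}$ of different source rectangles, but the splitting-free property of the continued-fraction-based rectangles from Section~\ref{sec3} is exactly what guarantees that this assembled object is itself an intact small special rectangle, so that the power-chain analysis applies to it and the $2d$-fold union bound is legitimate. Once this identification is secured, the parameter choices follow the same recipe as at the end of Section~\ref{sec3}, with the role previously played by the fixed constant $1/100$ now played by the variable thresholds $\eta$ and $\eps$.
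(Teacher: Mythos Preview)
Your proposal is correct and follows essentially the same route as the paper's own sketch: both observe that the modification \eqref{eq5.5} makes the $z$-intervals tile $[0,1)$, so that the small special rectangles cover nearly all of $S_\sigma$, and both upgrade Lemma~\ref{lem32} from mere existence of a single good index $j_0$ to the statement that a proportion arbitrarily close to $1$ of indices $j$ give non-defective forward and backward power chains simultaneously for all $\sigma$.

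Two minor corrections. First, the rectangles do not tile $S_\sigma$ up to a \emph{null} set: the admissible range $1\le j\le q'_{h+1}-q_{k+1}$ omits roughly $q_{k+1}$ indices, so the uncovered portion has measure at most $q_{k+1}(\Vert q'_h\beta\Vert+\Vert q'_{h+1}\beta\Vert)\lambda_2(S_\sigma)$, which is positive but tends to $0$ under the parameter choice $q'_{h+1}\gg q_{k+1}$. This is exactly the first of the two $\eps$-contributions the paper isolates. Second, your ``hard part'' is not actually hard: by the construction at the end of Section~\ref{sec3}, $R^*_\sigma$ is \emph{defined} as the first member $R_{\sigma,k,h}(\alpha;1;\beta;j_0)$ of the forward chain, and the identity $R^*_\sigma=T_{\alpha,\beta}^2(R_{\rho'}^-)\cup T_{\alpha,\beta}^2(R_{\rho''}^+)$ is a geometric consequence already verified in Sections~\ref{sec1} and~\ref{sec5}, not a separate obligation.
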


\begin{proof}[Idea of proof]
There are two critical changes to the proof of Lemma~\ref{lem12}, and each contributes to part of the value of~$\eps$.

First of all, by modifying the small rectangles in the $z$-direction,
we ensure that for each $\sigma=1,\ldots,d$, the inequality
\begin{displaymath}
\lambda_2\left(\bigcup_{j=1}^{q'_{h+1}-q_{k+1}}R_{\sigma,k,h}(\alpha;1;\beta;j)\right)
\ge\left(1-q_{k+1}(\Vert q'_h\beta\Vert+\Vert q'_{h+1}\beta\Vert)\right)\lambda_2(S_\sigma)
\end{displaymath}
holds.
With sufficiently large values of $h$ and~$k$, the factor $q_{k+1}(\Vert q'_h\beta\Vert+\Vert q'_{h+1}\beta\Vert)$
can be made arbitrarily close to~$0$.
This means that the small special rectangles in the strip $S_\sigma$ make up nearly all of~$S_\sigma$.

Next, the conclusion of Lemma~\ref{lem32} can be significantly extended.
It can be shown that by choosing $h$ and $k$ suitably large, we can guarantee a proportion arbitrarily close to $1$ of indices $j_0$
satisfying $q_{k+1}+1\le j_0\le q'_{h+1}-q_{k+1}$ such that for every $\sigma=1,\ldots,d$,
the $T_{\alpha,\beta}$-power chain \eqref{eq3.13} with $j=j_0$ is not defective, and
the $T_{\alpha,\beta}^{-1}$-power chain \eqref{eq3.22} with $j=j_0$ is not defective.
\end{proof}

We may assume, without loss of generality, that $\sigma=1$ is part of the cycle of length $d_0>d/2$ of the
splitting permutation.

Suppose that there exist two different small special rectangles $R'_1$ and $R''_1$ in the strip $S_1$ that arise from
$\eta$-nice collections $(R'_1,\ldots,R'_d)$ and $(R''_1,\ldots,R''_d)$ respectively such that
\textcolor{white}{xxxxxxxxxxxxxxxxxxxxxxxxxxxxxx}
\begin{displaymath}
\frac{\lambda_2(R'_1\cap\WWW)}{\lambda_2(R'_1)}>1-\eta
\quad\mbox{and}\quad
\frac{\lambda_2(R''_1\cap\SSS)}{\lambda_2(R''_1)}>1-\eta.
\end{displaymath}
Applying the splitting method argument at the end of Section~\ref{sec1} to~$R'_1$,
we conclude that there are at least $d_0$ sets among $R'_1,\ldots,R'_d$ which are overwhelmingly in~$\WWW$,
so that $f_\WWW\ge d_0>d/2$.
Meanwhile, applying the same argument to~$R''_1$,
we conclude that there are at least $d_0$ sets among $R''_1,\ldots,R''_d$ which are overwhelmingly in~$\SSS$,
so that $f_\SSS\ge d_0>d/2$.
This is clearly a contradiction, since $f_\WWW+f_\SSS=d$.

Hence the strip $S_1$ is overwhelmingly monochromatic.
We may assume, without loss of generality, that the strip $S_1$ is overwhelmingly in~$\WWW$.
In fact, a proportion exceedingly $(1-\eps)(1-\eta)$ of $S_1$ is in~$\WWW$.
We shall show that this eventually leads to a contradiction.

We return to the translation $3$-manifold $\MMM=\PPP\times[0,1)$, with $\lambda_3$-preserving flow in the direction
$\bfv=(\alpha,1,\beta)$.
We have assumed that this flow is not ergodic, so that there exists a decomposition $\MMM=\WWW_3\cup\SSS_3$
into two $\bfv$-flow-invariant and disjoint subsets $\WWW_3$ and $\SSS_3$ such that
\begin{displaymath}
1\le\lambda_3(\WWW_3)=f_\WWW\le d-1,
\quad
1\le\lambda_3(\SSS_3)=f_\SSS\le d-1,
\quad
f_\WWW+f_\SSS=d.
\end{displaymath}

\begin{remark}
In view of the relationship between the discrete $T_{\alpha,\beta}$-invariance and the continuous $\bfv$-flow-invariance,
one can visualize a close relationship between the pair $(\WWW,\SSS)$ and the pair $(\WWW_3,\SSS_3)$, in that
\begin{displaymath}
f_\WWW(P)=\lambda_3(\WWW_3)
\quad\mbox{and}\quad
f_\SSS(P)=\lambda_3(\SSS_3)
\quad\mbox{for almost all $P\in[0,1)^2$}.
\end{displaymath}
For convenience, we may write $f_\WWW=\lambda_3(\WWW_3)$ and $f_\SSS=\lambda_3(\SSS_3)$.
\end{remark}

Let $\chi=\chi_{\WWW_3}$ denote the characteristic function of the invariant subset $\WWW_3\subset\MMM$,
so that
\textcolor{white}{xxxxxxxxxxxxxxxxxxxxxxxxxxxxxx}
\begin{displaymath}
\chi(\bfs)=\left\{\begin{array}{ll}
1,&\mbox{if $\bfs\in\WWW_3$},\\
0,&\mbox{if $\bfs\not\in\WWW_3$}.
\end{array}\right.
\end{displaymath}
Writing $\bfs=(x,y,z)$ and using Fubini's theorem, we have
\begin{displaymath}
\int_\MMM\chi(\bfs)\,\dd\bfs
=\int_\PPP\left(\int_{[0,1)}\chi(x,y,z)\,\dd z\right)\dd\lambda_2(x,y),
\end{displaymath}
where the inner integral
\textcolor{white}{xxxxxxxxxxxxxxxxxxxxxxxxxxxxxx}
\begin{equation}\label{eq5.7}
\psi(x,y)=\int_{[0,1)}\chi(x,y,z)\,\dd z
\end{equation}
is well defined in the Lebesgue sense for almost every $(x,y)\in\PPP$.

\begin{lemma}\label{lem52}
Suppose that the $\bfv$-flow in $\MMM$ is not ergodic.
Then for almost every $(x,y)\in\PPP$, we have
\textcolor{white}{xxxxxxxxxxxxxxxxxxxxxxxxxxxxxx}
\begin{equation}\label{eq5.8}
\frac{1}{d}\le\psi(x,y)=\frac{f_\WWW}{d}\le\frac{d-1}{d}.
\end{equation}
\end{lemma}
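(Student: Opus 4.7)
The plan is to exploit the direct-product structure of the $\bfv$-flow on $\MMM=\PPP\times[0,1)$: the $(x,y)$-components evolve under the $(\alpha,1)$-directional flow on the polysquare surface $\PPP$, while the $z$-component independently rotates at speed $\beta$ on $[0,1)$. With this in hand, I will first show that $\psi$ is invariant under the projected flow on $\PPP$, then apply ergodicity of that flow to conclude $\psi$ is essentially constant, and finally identify the constant by a Fubini computation.

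For the invariance step, I fix $t>0$ and a generic $(x,y)\in\PPP$. The image of the vertical fiber $\{(x,y,z):z\in[0,1)\}$ under the time-$t$ map of the $\bfv$-flow is precisely the vertical fiber over the time-$t$ image of $(x,y)$ under the $(\alpha,1)$-flow on $\PPP$, rigidly translated by $\beta t$ modulo $1$ in the $z$-coordinate. Here I use that none of the $Z$-edges is singular, so the flow is genuinely integrable in the $z$-direction and no splitting in the $z$-fibres occurs. Since $\WWW_3$ is $\bfv$-flow-invariant and $1$-dimensional Lebesgue measure is translation-invariant, applying $\lambda_1$ to the intersection of $\WWW_3$ with both fibers yields the same value. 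Thus $\psi$ is constant along orbits of the $(\alpha,1)$-flow on $\PPP$.

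To close the argument I would invoke ergodicity of the $(\alpha,1)$-flow on $\PPP$ with respect to $\lambda_2$: being a polysquare (origami) translation surface, $\PPP$ is a Veech surface, and by the Veech dichotomy every direction of irrational slope on a Veech surface is uniquely ergodic. Hence $\psi\equiv c$ almost everywhere on $\PPP$ for some constant $c$. Fubini's theorem, together with the identification $\lambda_3(\WWW_3)=f_\WWW$ recalled in the remark preceding the lemma, gives
\begin{displaymath}
c\cdot d
=c\cdot\lambda_2(\PPP)
=\int_\PPP\psi\,\dd\lambda_2
=\int_\MMM\chi\,\dd\lambda_3
=\lambda_3(\WWW_3)
=f_\WWW,
\end{displaymath}
so $c=f_\WWW/d$. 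The bounds $1/d\le c\le(d-1)/d$ follow directly from \eqref{eq5.2}.

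The main obstacle is the ergodicity step. Appealing to the Veech dichotomy is efficient but imports substantial Teichm\"uller-dynamics machinery that is not used elsewhere in the paper. A more self-contained route, consistent with the style of Sections~\ref{sec1} and~\ref{sec3}, would be to work with the first-return interval exchange $T_\alpha$ on the union of bottom edges of $\PPP$, observe that the restriction of $\psi$ there is $T_\alpha$-invariant, and then prove via continued-fraction estimates and the Kronecker density theorem (in the spirit of Lemma~\ref{lem11} and the $T_0$-ergodicity argument in Section~\ref{sec1}) that $T_\alpha$-invariant $L^1$-functions are almost everywhere constant. This is the step in which the polysquare structure of $\PPP$ and the irrationality of $\alpha$ must be genuinely combined; the rest of the argument is only a brief computation.
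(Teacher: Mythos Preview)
Your proposal is correct and follows essentially the same route as the paper's own sketch: show $\psi$ is invariant under the projected $(\alpha,1)$-flow on $\PPP$ using the integrability of the $z$-direction, invoke ergodicity of that flow for irrational slope, and read off the constant by Fubini. The only cosmetic difference is the citation for ergodicity---the paper appeals to the Gutkin--Veech theorem \cite{gutkin84,veech87} rather than the full Veech dichotomy---so your worry in the last paragraph that invoking outside machinery is out of keeping with the paper is unfounded: the paper does exactly this and does not attempt a self-contained proof of ergodicity on~$\PPP$.
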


\begin{proof}[Sketch of proof]
Recall that the projection of the $\bfv$-flow in the translation $3$-manifold $\MMM$ to the $xy$-plane is a geodesic flow with slope $\alpha^{-1}$
on the polysquare translation surface~$\PPP$.
Meanwhile, note that $\MMM=\PPP\times[0,1)$ is a cartesian product where the second factor, the unit torus $[0,1)$,
is in the \textit{integrable} $z$-direction.
Since $\WWW_3\subset\MMM$ is a $\bfv$-flow-invariant set, it follows that the function $\psi(x,y)$ in \eqref{eq5.7} on $\PPP$
is invariant under the geodesic flow with slope $\alpha^{-1}$ on~$\PPP$.
As the slope $\alpha^{-1}$ is irrational, it follows from the Gutkin--Veech theorem \cite{gutkin84,veech87}
that this geodesic flow on $\PPP$ is ergodic.
This implies that the function $\psi(x,y)$ is constant almost everywhere on~$\PPP$.
The result follows.
\end{proof}

Lemma~\ref{lem52} implies that $f_\SSS/d\ge1/d$ part of $S_1$ is in~$\SSS$, a contradiction.
This gives ergodicity in Theorem~\ref{thm2}.
Unique ergodicity can be established by a straightforward adaptation of the argument in Section~\ref{sec4}.

We also establish the following simple consequence for use later.

Let $\partial_X\PPP$ denote the union of the $d$ edges of the polysquare surface $\PPP$ which are in the $x$-direction.

\begin{lemma}\label{lem53}
Suppose that the $\bfv$-flow in $\MMM$ is not ergodic.
Then for almost every point $(x,y)\in\partial_X\PPP$, the inequalities \eqref{eq5.8} hold.
\end{lemma}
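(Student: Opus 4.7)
The plan is to mimic the proof of Lemma~\ref{lem52}, with the $2$D geodesic flow on $\PPP$ replaced by its Poincar\'e return map to the $1$-dimensional cross-section $\partial_X\PPP$. Since the flow direction $(\alpha,1)$ on $\PPP$ moves one unit in the $y$-direction per unit of time and $\partial_X\PPP$ is a disjoint union of $d$ horizontal unit edges of total $\lambda_1$-measure~$d$, the flow returns to $\partial_X\PPP$ after time~$1$, giving the explicit Poincar\'e return map $T_X:\partial_X\PPP\to\partial_X\PPP$ defined by $T_X(x,y)=(x+\alpha,y+1)$ modulo the edge identifications of~$\PPP$.

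First, I would check that $\psi$ from \eqref{eq5.7} makes sense $\lambda_1$-a.e.\ on $\partial_X\PPP$. Since $\WWW_3$ is Borel and $\partial_X\PPP\times[0,1)$ is a Borel subset of $\MMM$, applying Fubini to the product measure $\lambda_1\times\lambda_1$ on this set shows that the $z$-integral $\int_0^1\chi_{\WWW_3}(x,y,z)\,\dd z$ is defined for $\lambda_1$-almost every $(x,y)\in\partial_X\PPP$, so that $\psi$ has a pointwise meaning on $\partial_X\PPP$ that is consistent with the $\lambda_2$-a.e.\ definition on~$\PPP$.

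Next, I would verify that $\psi|_{\partial_X\PPP}$ is $T_X$-invariant. Evaluating the $\bfv$-flow invariance of $\chi_{\WWW_3}$ at time $t=1$ gives $\chi_{\WWW_3}(x+\alpha,y+1,z+\beta)=\chi_{\WWW_3}(x,y,z)$, and a change of variable in $z$ combined with $1$-periodicity yields $\psi(T_X(x,y))=\psi(x,y)$. Appealing to the Gutkin--Veech theorem for unique ergodicity of the $2$D flow on $\PPP$, together with the standard suspension-flow correspondence (the $2$D flow is a suspension of $T_X$ with constant return time~$1$), the map $T_X$ is ergodic with respect to~$\lambda_1$, and hence $\psi$ is $\lambda_1$-a.e.\ equal to a constant $c$ on~$\partial_X\PPP$.

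Finally, to identify~$c$ I would integrate. Parametrizing $\PPP$ by $(x_0,y_0,t)\mapsto(x_0+\alpha t,y_0+t)$ with $(x_0,y_0)\in\partial_X\PPP$ and $t\in[0,1)$ (Jacobian~$1$ on each square), and using flow-invariance of $\psi$ on~$\PPP$,
$$cd=\int_{\partial_X\PPP}\psi\,\dd\lambda_1=\int_{\partial_X\PPP}\int_0^1\psi(x_0+\alpha t,y_0+t)\,\dd t\,\dd\lambda_1(x_0,y_0)=\int_\PPP\psi\,\dd\lambda_2=\lambda_3(\WWW_3)=f_\WWW,$$
so $c=f_\WWW/d$, and the inequalities \eqref{eq5.8} follow from $1\le f_\WWW\le d-1$. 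The principal technical point will be the transfer of ergodicity from the $2$D flow to the section map~$T_X$; this is classical for suspension flows with constant return time, but one must discard the $\lambda_1$-null set of points in $\partial_X\PPP$ whose forward $T_X$-orbit passes through a vertex of~$\PPP$, which is null because $\alpha$ is irrational.
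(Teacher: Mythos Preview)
Your argument is correct. The paper's own proof is only a two-sentence sketch: it notes that linearity of the $\bfv$-flow makes the integral \eqref{eq5.7} well defined $\lambda_1$-a.e.\ on $\partial_X\PPP$, and records the $d$-to-$1$ projection $\partial_X\PPP\to[0,1)$, leaving the reader to infer the rest from Lemma~\ref{lem52}. The natural reading is that one transfers the conclusion $\psi=f_\WWW/d$ a.e.\ on $\PPP$ to $\partial_X\PPP$ by flow-invariance of $\psi$ together with a Fubini argument in the coordinates $(p,t)\in\partial_X\PPP\times[0,1)$, $(p,t)\mapsto p+t(\alpha,1)$. You instead re-run the ergodicity argument directly on the section: you pass to the Poincar\'e map $T_X$ on $\partial_X\PPP$, verify $T_X$-invariance of $\psi$, pull ergodicity of $T_X$ back from the Gutkin--Veech flow via the constant-time suspension, and then integrate to identify the constant. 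Both routes are short and equivalent in content; yours has the virtue of being self-contained and not relying on the a.e.\ statement of Lemma~\ref{lem52} at all. One small simplification in your last step: since $\partial_X\PPP\times[0,1)=\YYY$ and $\WWW_3\cap\YYY=\WWW$, you can compute the constant directly from $cd=\int_{\partial_X\PPP}\psi\,\dd\lambda_1=\lambda_2(\WWW)=f_\WWW$, without the detour through $\int_\PPP\psi\,\dd\lambda_2$.
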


\begin{proof}[Sketch of proof]
Since the $\bfv$-flow is linear, the integral \eqref{eq5.7} is well defined for almost every $(x,y)\in\partial_X\PPP$.
On the other hand, for every $x^*\in[0,1)$, there are precisely $d$ points $(x_\sigma,y_\sigma)\in\partial_X\PPP$,
$\sigma=1,\ldots,d$, such that $x_\sigma\equiv x^*\bmod{1}$.
\end{proof}

%
%

\section{A stronger result}\label{sec6}

The next step in our investigation is to remove the somewhat artificial hypothesis in Theorem~\ref{thm2}
concerning the splitting permutation.

\begin{theorem}\label{thm3}
Suppose that a polycube translation $3$-manifold $\MMM$ is the cartesian product of a polysquare translation surface $\PPP$
with $d$ square faces and the unit torus $[0,1)$.
Then any half-infinite geodesic in $\MMM$ with a Kronecker direction $\bfv\in\Rr^3$ is uniformly distributed unless it hits a singularity.
\end{theorem}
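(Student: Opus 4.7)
The plan is to upgrade the argument of Section~\ref{sec5} by replacing the cycle-length hypothesis with an additional $T_0^2$-invariance observation. Assume, for contradiction, that $T_{\alpha,\beta}$ is not ergodic, so we have disjoint $T_{\alpha,\beta}$-invariant subsets $\WWW,\SSS\subset\YYY$ as in \eqref{eq5.2}, with constant multiplicity functions $f_\WWW,f_\SSS$ satisfying $f_\WWW+f_\SSS=d$ and $1\le f_\WWW\le d-1$. Lemma~\ref{lem51} still applies and produces, for any prescribed $\eps,\eta>0$ and sufficiently large continued-fraction indices $k,h$, an abundance of $\eta$-nice collections of small special rectangles whose union covers a proportion at least $1-\eps$ of each strip $S_\sigma$.

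Let $C_1,\ldots,C_m$ denote the cycles of the splitting permutation of $\MMM$. For any $P\in[0,1)^2$ admitting an $\eta$-nice collection $(R_1,\ldots,R_d)$, the splitting argument of Section~\ref{sec1}, applied separately within each cycle as in Examples~\ref{ex24} and~\ref{ex25}, shows that the dominant colour of $R_\sigma$ depends only on the cycle $C_j$ containing $\sigma$; call this common colour $\phi_j(P)\in\{\WWW,\SSS\}$. The new ingredient is that each $\phi_j$ is $T_0^2$-invariant: at $T_0^2(P)$ the second-generation rectangles $R^*_\sigma$ from~\eqref{eq1.10} again form a nice collection, and each $R^*_\sigma$ is built from $T_{\alpha,\beta}^2$-images of a left half $R_{\rho'}^-$ and a right half $R_{\rho''}^+$ whose parent indices $\rho',\rho''$ lie in the same splitting-cycle as $\sigma$ (this is the defining property of the splitting permutation). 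Since $T_{\alpha,\beta}$ preserves both $\WWW$ and $\SSS$, the two halves share the common colour $\phi_j(P)$, and hence $\phi_j(T_0^2(P))=\phi_j(P)$.

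Since $1,\alpha,\beta$ are $\Qq$-linearly independent, so are $1,2\alpha,2\beta$, and hence $T_0^2$ --- the $(2\alpha,2\beta)$-shift on $[0,1)^2$ --- is ergodic by the Kronecker-density argument of Section~\ref{sec1}. Each $\phi_j$ is therefore a.e.\ equal to a constant. For every $\sigma\in C_j$ the strip $S_\sigma$ then satisfies
\begin{displaymath}
\frac{\lambda_2(S_\sigma\cap\WWW)}{\lambda_2(S_\sigma)}\ge(1-\eps)(1-\eta)\quad\mbox{if }\phi_j=\WWW,
\end{displaymath}
and the analogous inequality with $\SSS$ in place of $\WWW$ if $\phi_j=\SSS$. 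On the other hand, Lemma~\ref{lem53} together with Fubini gives the exact identity
\begin{displaymath}
\frac{\lambda_2(S_\sigma\cap\WWW)}{\lambda_2(S_\sigma)}=\frac{f_\WWW}{d}
\end{displaymath}
for every $\sigma$. Choosing $\eps,\eta$ so that $(1-\eps)(1-\eta)>(d-1)/d$, we deduce: if any cycle has $\phi_j=\WWW$, then $f_\WWW/d>(d-1)/d$, forcing $f_\WWW=d$; if all cycles have $\phi_j=\SSS$, then $f_\WWW/d<1/d$, forcing $f_\WWW=0$. Both cases contradict $1\le f_\WWW\le d-1$, so $T_{\alpha,\beta}$ must be ergodic, and hence so is the $\bfv$-flow on $\MMM$.

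Unique ergodicity, and therefore Theorem~\ref{thm3}, will follow by a verbatim repetition of Parts~1 and~2 of Section~\ref{sec4}, using the modulo-one reduction $\MMM\to[0,1)^3$ and the Kronecker--Weyl equidistribution theorem. The main obstacle I anticipate is making the $T_0^2$-invariance of $\phi_j$ fully rigorous for arbitrary polysquare~$\PPP$; this reduces to the bookkeeping assertion that the two half-rectangles contributing to $R^*_\sigma$ always belong to the same splitting-cycle as $\sigma$, which is precisely the defining property of the splitting permutation, so no new combinatorial input should be required.
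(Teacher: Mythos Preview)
Your central claim---that the parent indices $\rho',\rho''$ contributing to $R^*_\sigma$ always lie in the same splitting-cycle as $\sigma$---is false, and the argument collapses without it. The splitting permutation $\pi$ is defined by $\pi(\rho')=\rho''$ whenever $R^*_\sigma=T_{\alpha,\beta}^2(R_{\rho'}^-)\cup T_{\alpha,\beta}^2(R_{\rho''}^+)$; this places $\rho'$ and $\rho''$ in the same $\pi$-cycle, but says nothing about $\sigma$. In the $4$-cube L-solid of Example~\ref{ex24} the cycles are $\{1,2,4\}$ and $\{3\}$, yet the label on $Y_4$ is $3\sim3$: here $\sigma=4\in\{1,2,4\}$ while $\rho'=\rho''=3\in\{3\}$. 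Consequently the colour of $R^*_4$ is $\phi_{\{3\}}(P)$, whereas the colours of $R^*_1,R^*_2$ are $\phi_{\{1,2,4\}}(P)$. So the $R^*_\sigma$ for $\sigma\in\{1,2,4\}$ need not share a dominant colour, $\phi_{\{1,2,4\}}(T_0^2(P))$ is not even well-defined by your recipe, and the asserted $T_0^2$-invariance fails. What you call ``bookkeeping'' is in fact the heart of the matter: the map $\sigma\mapsto[\rho']$ need not be constant on cycles, and when it is, it need not be the identity on cycles.

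The paper takes an entirely different route. Rather than iterating the splitting argument, Section~\ref{sec6} exploits the integrable $z$-direction directly: it approximates $\WWW$ by a finite union of rectangles $\WWW_1$, passes to $z$-parallel cross-sections $U_\sigma=\cs_\sigma(x;\WWW_1)$, and proves a Fourier-analytic lower bound (Lemma~\ref{lem61}) showing that for a positive-measure set of shifts $u_0$ one has $\lambda_1(U_\sigma\bigtriangleup(u_0+U_1))\gtrsim d^{-2}\lambda_1(U_1)(1-\lambda_1(U_1))$ for all~$\sigma$. A number-theoretic lemma (Lemma~\ref{lem62}) then realises such a $u_0$ approximately as $N\beta$ with $\Vert N\alpha\Vert$ small, and a splitting-free $T_{\alpha,\beta}^N$-chain argument (Lemmas~\ref{lem63}--\ref{lem66}) transports this into a contradiction with the $T_{\alpha,\beta}$-invariance of~$\WWW$ and the bound from Lemma~\ref{lem53}. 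None of this cycle-by-cycle colour tracking appears.
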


As before for Theorem~\ref{thm2}, we assume that $\bfv=(\alpha,1,\beta)$, where $\alpha$, $\beta$ and $\alpha/\beta$ are all irrational,
and that $0<\alpha<1$ and $\beta>0$.
We also adopt the notation and terminology in Section~\ref{sec5}.
Again, we assume that the discrete transformation $T_{\alpha,\beta}$ given by \eqref{eq5.1} is not ergodic.
This leads to a partition $\YYY=\WWW\cup\SSS$ into a disjoint union
of two $T_{\alpha,\beta}$-invariant measurable subsets $\WWW$ and $\SSS$ such that \eqref{eq5.2}--\eqref{eq5.4} are valid.
Since $\WWW$ is measurable, given any arbitrarily small $\eps>0$, to be fixed later, there exists a set
$\WWW_1=\WWW_1(\eps;\WWW)$ which is a finite union of disjoint rectangles aligned with the edges of the faces
$Y_1,\ldots,Y_d$ such that the symmetric difference
\begin{displaymath}
\WWW\bigtriangleup\WWW_1=(\WWW\setminus\WWW_1)\cup(\WWW_1\setminus\WWW)
\end{displaymath}
satisfies the inequality
\textcolor{white}{xxxxxxxxxxxxxxxxxxxxxxxxxxxxxx}
\begin{equation}\label{eq6.1}
\lambda_2(\WWW\bigtriangleup\WWW_1)<\eps.
\end{equation}

Given any positive integer~$H$, we can divide the unit interval $[0,1)$ in the usual way into $H$ equal subintervals
\begin{displaymath}
I(\tau)=\left[\frac{\tau-1}{H},\frac{\tau}{H}\right),
\quad
\tau=1,\ldots,H.
\end{displaymath}
For each $\tau=1,\ldots,H$ and $\sigma=1,\ldots,d$, let $I_\sigma(\tau)$ be the subinterval on the part of $\partial_X\PPP$ corresponding to
the face $Y_\sigma$ such that $I_\sigma(\tau)\equiv I(\tau)\bmod{1}$, and consider the strip $I_\sigma(\tau)\times\{y_\sigma\}\times[0,1)$
on the face~$Y_\sigma$, where $y_\sigma$ is the common value of the $y$-coordinates of the points on~$Y_\sigma$.

Since the set $\WWW_1$ is a finite union of disjoint aligned rectangles, it follows that as long as the threshold $H=H(\WWW_1)$
is chosen sufficiently large, then at least half of the indices $\tau=1,\ldots,H$ are \textit{safe} in the sense that
for every $\sigma=1,\ldots,d$, the strip $I_\sigma(\tau)\times\{y_\sigma\}\times[0,1)$ on the face~$Y_\sigma$ does not contain any $z$-parallel edge
of any aligned rectangle in the finite union~$\WWW_1$.
Let $\HHH\subset\{1,\ldots,H\}$ denote the collection of safe indices.
Then it is clear that $\vert\HHH\vert\ge H/2$.

Let $\tau^*\in\HHH$ satisfy the property
\begin{align}
&
\lambda_2\left((\WWW\bigtriangleup\WWW_1)\cap\bigcup_{\sigma=1}^d(I_\sigma(\tau^*)\times\{y_\sigma\}\times[0,1))\right)
\nonumber
\\
&\quad
=\min_{\tau\in\HHH}\lambda_2\left((\WWW\bigtriangleup\WWW_1)\cap\bigcup_{\sigma=1}^d(I_\sigma(\tau)\times\{y_\sigma\}\times[0,1))\right).
\nonumber
\end{align}
For convenience, write
\begin{equation}\label{eq6.2}
I(\tau^*)=\III_0=\III_0(\WWW_1)\subset[0,1),
\quad\mbox{so that}\quad
\vert\III_0\vert=H^{-1},
\end{equation}
and for $\sigma=1,\ldots,d$, write
\begin{displaymath}
\III_\sigma=I_\sigma(\tau^*),
\quad\mbox{so that}\quad
\III_\sigma\equiv\III_0\bmod{1}.
\end{displaymath}
We have the following intermediate results.

\begin{pa}
For every $\sigma=1,\ldots,d$ and every $x\in\III_\sigma$, consider the $z$-parallel cross section of $\WWW_1$
on the face $Y_\sigma$ given by
\begin{displaymath}
\cs_\sigma(x;\WWW_1)=\{z\in[0,1):(x,y_\sigma,z)\in\WWW_1\cap Y_\sigma\}\subset[0,1).
\end{displaymath}
Then $\cs_\sigma(x;\WWW_1)$ is independent of the choice of $x\in\III_\sigma$.
\end{pa}

\begin{proof}[Sketch of proof]
Consider the set
\begin{displaymath}
\cs_\sigma(\WWW_1)=\{(x,z)\in\III_\sigma\times[0,1):(x,y_\sigma,z)\in\WWW_1\cap Y_\sigma\}.
\end{displaymath}
Note that $\III_\sigma=I_\sigma(\tau^*)$, where $\tau^*$ is a safe index.
This means that for any aligned rectangle $\frakR$ in the finite union~$\WWW_1$, either
\begin{displaymath}
\cs_\sigma(\WWW_1)\cap\frakR=\emptyset
\quad\mbox{or}\quad
\cs_\sigma(\WWW_1)\cap\frakR=\III_\sigma\times\frakJ
\end{displaymath}
for some subinterval $\frakJ\subset[0,1)$.
The result follows almost immediately.
\end{proof}

\begin{remark}
For every $\sigma=1,\ldots,d$ and almost every $x\in\III_\sigma$, the $z$-parallel cross section of $\WWW$
on the face $Y_\sigma$ given by
\begin{displaymath}
\cs_\sigma(x;\WWW)=\{z\in[0,1):(x,y_\sigma,z)\in\WWW\cap Y_\sigma\}\subset[0,1)
\end{displaymath}
is well defined and measurable, but is not independent of the choice of $x\in\III_\sigma$.
\end{remark}

\begin{pb}
For every $\sigma=1,\ldots,d$, we have
\begin{equation}\label{eq6.3}
\lambda_2((\WWW\bigtriangleup\WWW_1)\cap(\III_\sigma\times\{y_\sigma\}\times[0,1)))<\frac{2\eps}{H}.
\end{equation}
\end{pb}

\begin{proof}
We have
\begin{align}
&
\lambda_2((\WWW\bigtriangleup\WWW_1)\cap(\III_\sigma\times\{y_\sigma\}\times[0,1)))
=\lambda_2((\WWW\bigtriangleup\WWW_1)\cap(I_\sigma(\tau^*)\times\{y_\sigma\}\times[0,1)))
\nonumber
\\
&\quad
\le\lambda_2\left((\WWW\bigtriangleup\WWW_1)\cap\bigcup_{\sigma=1}^d(I_\sigma(\tau^*)\times\{y_\sigma\}\times[0,1))\right)
\le\frac{\lambda_2(\WWW\bigtriangleup\WWW_1)}{\vert\HHH\vert}
<\frac{2\eps}{H},
\nonumber
\end{align}
where the second last inequality comes from the fact that the minimum over $\HHH$ does not exceed the average
and the fact that
\begin{displaymath}
\bigcup_{\tau\in\HHH}\bigcup_{\sigma=1}^d(I_\sigma(\tau)\times\{y_\sigma\}\times[0,1))\subset\YYY
\end{displaymath}
is a disjoint union so that
\begin{align}
&
\sum_{\tau\in\HHH}\lambda_2\left((\WWW\bigtriangleup\WWW_1)\cap\bigcup_{\sigma=1}^d(I_\sigma(\tau)\times\{y_\sigma\}\times[0,1))\right)
\nonumber
\\
&\quad
\le\lambda_2((\WWW\bigtriangleup\WWW_1)\cap\YYY)
=\lambda_2(\WWW\bigtriangleup\WWW_1).
\nonumber
\end{align}
This completes the proof.
\end{proof}

In view of Property~A, for every $\sigma=1,\ldots,d$, we can choose $x_\sigma\in\III_\sigma$ arbitrarily, and consider
the $z$-parellel cross section
\begin{displaymath}
U_\sigma=\cs_\sigma(x_\sigma;\WWW_1).
\end{displaymath}

We need some auxiliary results.

We consider Lebesgue measurable subsets $U_\sigma\subset[0,1)$, $\sigma=1,\ldots,d$, of the unit torus $[0,1)$.
In particular, we make the assumption that $0<\lambda_1(U_1)<1$, where $\lambda_1$ denotes $1$-dimensional Lebesgue measure.
Furthermore, for any real number $u\in\Rr$, we consider the $u$-translated copy of~$U_1$, given by
\begin{displaymath}
u+U_1=\{\{u+z\}:z\in U_1\}.
\end{displaymath}

\begin{lemma}\label{lem61}
The set of values $u_0\in[0,1)$ for which the inequalities
\begin{displaymath}
\lambda_1(U_\sigma\bigtriangleup(u_0+U_1))\ge\frac{1}{32d^2}\lambda_1(U_1)(1-\lambda_1(U_1)),
\quad
\sigma=1,\ldots,d,
\end{displaymath}
hold simultaneously has Lebesgue measure at least~$1/2$.
\end{lemma}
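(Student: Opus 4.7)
The plan is to show, for each $\sigma = 1,\ldots,d$, that the bad set
$$B_\sigma := \{u_0 \in [0,1) : \phi_\sigma(u_0) < c\},$$
where $\phi_\sigma(u_0) := \lambda_1(U_\sigma \bigtriangleup (u_0 + U_1))$ and $c := \tfrac{1}{32d^2} \lambda_1(U_1)(1-\lambda_1(U_1))$, has measure at most $1/(64d^2)$. The lemma then follows from a union bound, since $\lambda_1\bigl(\bigcup_\sigma B_\sigma\bigr) \le d/(64d^2) = 1/(64d) \le 1/2$ for $d \ge 1$.

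The main input is the triangle inequality for the symmetric-difference pseudometric $d(A,B) = \lambda_1(A \bigtriangleup B)$. Applying it to $U_\sigma$, $u_0 + U_1$ and $(u_0+v) + U_1$, and reducing the middle symmetric difference to $U_1 \bigtriangleup (v + U_1)$ by translation-invariance of Lebesgue measure, one obtains
$$\phi_\sigma(u_0 + v) + \phi_\sigma(u_0) \ge \phi_1(v) \quad\text{for all } u_0, v \in [0,1).$$
In particular, if $u_0 \in B_\sigma$ and $\phi_1(v) \ge 2c$, then $\phi_\sigma(u_0 + v) > c$, so $u_0 + v \notin B_\sigma$. Writing $G := \{v \in [0,1) : \phi_1(v) \ge 2c\}$ and working in the circle group $\mathbb{T} = [0,1)$, this says the sumset $B_\sigma + G$ is disjoint from $B_\sigma$, hence $\lambda_1(B_\sigma + G) \le 1 - \lambda_1(B_\sigma)$.

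I would next invoke the Raikov--Kneser sumset inequality on $\mathbb{T}$: for any measurable $A, B \subseteq \mathbb{T}$, $\lambda_1(A + B) \ge \min(\lambda_1(A) + \lambda_1(B), 1)$. Combined with the disjointness bound above, this forces $2\lambda_1(B_\sigma) \le \lambda_1(\{\phi_1 < 2c\})$ whenever $\lambda_1(B_\sigma) > 0$ (the alternative $\lambda_1(B_\sigma) + \lambda_1(G) > 1$ immediately contradicts $\lambda_1(B_\sigma + G) \le 1 - \lambda_1(B_\sigma)$). Specialising to $\sigma = 1$ with an arbitrary threshold $t$ in place of $c$ gives the self-improving doubling estimate
$$2\lambda_1(\{\phi_1 < t\}) \le \lambda_1(\{\phi_1 < 2t\})$$
valid whenever the left side is positive. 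Iterating $k$ times starting from $t = 2c$, the level set $\{\phi_1 < 2^{k+1} c\}$ becomes all of $[0,1)$ once $2^{k+1} c$ exceeds $\max \phi_1$. Since $\max \phi_1 \ge \int_0^1 \phi_1(v)\,dv = 2\lambda_1(U_1)(1-\lambda_1(U_1))$ (the mean computed by Fubini exactly as in Lemma~\ref{lem11}), the ratio $\max \phi_1/(2c)$ is at least $32d^2$, so the iteration terminates with $\lambda_1(\{\phi_1 < 2c\}) \le 1/(32d^2)$. Hence $\lambda_1(B_\sigma) \le 1/(64d^2)$, as required.

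The main conceptual obstacle is recognising that the combination of the symmetric-difference triangle inequality with the Raikov--Kneser inequality produces the decisive doubling property for the sublevel sets of $\phi_1$; once this is in hand, the rest is mechanical. The only technical point requiring comment is the Lebesgue measurability of $B_\sigma + G$, but this is harmless since the sumset of two measurable subsets of $\mathbb{T}$ is analytic and thus Lebesgue measurable, so Raikov--Kneser applies without modification.
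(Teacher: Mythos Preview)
Your proof is correct and takes a genuinely different route from the paper. The paper's argument (Section~\ref{sec7}) is Fourier-analytic: it expands $\lambda_1(U_\sigma\bigtriangleup(u+U_1))$ via Parseval as $\sum_{n}|a_{n;\sigma}-\ee^{-2\pi\ii nu}a_{n;1}|^2$, observes that for each nonzero frequency $n$ the set of $u$ for which the $n$-th term falls below $\eta^2|a_{n;1}|^2$ has measure at most~$\eta$ (a circle-versus-disk argument in the complex plane), and then aggregates over frequencies so that outside a violator set of measure $\le2\eta d$ the full sum exceeds $\tfrac{\eta^2}{2}\sum_{n\ne0}|a_{n;1}|^2=\tfrac{\eta^2}{2}\lambda_1(U_1)(1-\lambda_1(U_1))$; setting $\eta=1/(4d)$ finishes.

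Your argument is purely measure-theoretic: the triangle inequality for the symmetric-difference pseudometric gives $B_\sigma+G$ disjoint from~$B_\sigma$, and the Raikov--Macbeath sumset inequality on $\Tt$ then forces $2\lambda_1(B_\sigma)\le\lambda_1(\{\phi_1<2c\})$; iterating this with $\sigma=1$ yields the decisive sublevel-set bound. One small point of bookkeeping: your phrase ``the iteration terminates with $\lambda_1(\{\phi_1<2c\})\le1/(32d^2)$'' is worded as though the \emph{lower} bound $\max\phi_1\ge 64d^2c$ determines where the iteration stops, whereas in fact it guarantees at least $\lfloor\log_2(32d^2)\rfloor$ valid doublings, yielding only $\lambda_1(\{\phi_1<2c\})\le1/(16d^2)$ in general. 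This is harmless, since the union bound then gives $d/(32d^2)=1/(32d)<1/2$ with room to spare. The paper's Fourier approach is more self-contained (it invokes no external additive-combinatorics theorem), while yours isolates a clean combinatorial mechanism and would transfer directly to any compact connected abelian group where Macbeath's inequality is available.
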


For every real number $z\in\Rr$, denote by
\begin{displaymath}
\Vert z\Vert=\min_{n\in\Zz}\vert z-n\vert
\end{displaymath}
the distance of $z$ from the nearest integer.

\begin{lemma}\label{lem62}
Let $\bfv=(\alpha,1,\beta)\in\Rr^3$ be a Kronecker vector.
For every $\eps_1>0$, there exists an infinite sequence
\begin{displaymath}
1\le m_1(\eps_1)<m_2(\eps_1)<m_3(\eps_1)<\ldots
\end{displaymath}
of positive integers such that
\begin{equation}\label{eq6.4}
\Vert m_j(\eps_1)\alpha\Vert<\eps_1,
\quad
j=1,2,3,\ldots,
\end{equation}
and the sequence $\{m_j(\eps_1)\beta\}$, $j=1,2,3,\ldots,$ is uniformly distributed in $[0,1)$.
\end{lemma}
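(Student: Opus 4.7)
The plan is to construct $\{m_j(\eps_1)\}$ as the ordered list of all positive integers $m$ such that $\{m\alpha\}$ lies in the arc $I_{\eps_1} = \{x \in [0,1) : \Vert x\Vert < \eps_1\}$, which is a symmetric neighbourhood of $0$ on the unit circle of Lebesgue measure $2\eps_1$ (assuming $\eps_1 < 1/2$, otherwise the statement is trivial). Since $\alpha$ is irrational, the one-dimensional Kronecker--Weyl theorem guarantees that this set of $m$ is infinite and of asymptotic density $2\eps_1$, and by construction $\Vert m_j(\eps_1)\alpha\Vert<\eps_1$ for every $j$, so \eqref{eq6.4} holds.

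The core of the argument is to use the hypothesis that $\bfv=(\alpha,1,\beta)$ is a Kronecker vector, i.e., that $1,\alpha,\beta$ are linearly independent over $\Qq$. This is precisely the condition under which the two-dimensional Kronecker--Weyl theorem applies to the shift by $(\alpha,\beta)$: the orbit $(\{n\alpha\},\{n\beta\})$, $n=1,2,3,\ldots,$ is uniformly distributed in the unit torus $[0,1)^2$. In particular, for every subinterval $[a,b]\subset[0,1)$, applying uniform distribution to the Riemann-integrable indicators of the rectangles $I_{\eps_1}\times[a,b]$ and $I_{\eps_1}\times[0,1)$ gives
\begin{equation*}
\lim_{N\to\infty}\frac{\vert\{1\le n\le N:\{n\alpha\}\in I_{\eps_1},\,\{n\beta\}\in[a,b]\}\vert}{N}=2\eps_1(b-a),
\end{equation*}
\begin{equation*}
\lim_{N\to\infty}\frac{\vert\{1\le n\le N:\{n\alpha\}\in I_{\eps_1}\}\vert}{N}=2\eps_1.
\end{equation*}

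Taking $N=m_J(\eps_1)$, the denominator is exactly $J$, and the ratio of the two displays gives
\begin{equation*}
\lim_{J\to\infty}\frac{\vert\{1\le j\le J:\{m_j(\eps_1)\beta\}\in[a,b]\}\vert}{J}=b-a,
\end{equation*}
which is uniform distribution of the sequence $\{m_j(\eps_1)\beta\}$ in $[0,1)$. There is no real obstacle here beyond verifying that the Kronecker vector assumption on the 3-vector $(\alpha,1,\beta)$ is exactly the $\Qq$-linear independence of $1,\alpha,\beta$ needed to invoke the two-dimensional Kronecker--Weyl theorem; once that is noted, the proof reduces to a density ratio. A minor housekeeping point is that $I_{\eps_1}$ should be taken as the half-open arc $[0,\eps_1)\cup[1-\eps_1,1)$ to make it a genuine subset of $[0,1)$ whose boundary has measure zero, so that applying Kronecker--Weyl to its indicator is justified.
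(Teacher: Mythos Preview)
Your proof is correct and follows essentially the same approach as the paper: define the $m_j$ as the ordered subsequence of integers $m$ with $\{m\alpha\}\in[0,\eps_1)\cup(1-\eps_1,1)$, invoke the two-dimensional Kronecker--Weyl theorem for the pair $(\alpha,\beta)$ (valid precisely because $1,\alpha,\beta$ are $\Qq$-linearly independent, which is the Kronecker-vector hypothesis), and conclude. The paper simply writes ``the conclusion follows easily'' at the point where you spell out the density-ratio computation, so your version is a more detailed rendition of the same argument rather than a different one.
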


\begin{proof}
Since $\alpha$, $\beta$ and $\alpha/\beta$ are all irrational, the Kronecker--Weyl equidistribution theorem implies that
the sequence
\begin{displaymath}
(\{j\alpha\},\{j\beta\}),
\quad
j=1,2,3,\ldots,
\end{displaymath}
is uniformly distributed in the unit square $[0,1)^2$.
For any $\eps_1>0$, let $m_1,m_2,m_3,\ldots$ be the subsequence of $1,2,3,\ldots$ such that
\begin{displaymath}
\{m_j\alpha\}\in[0,\eps_1)\cup(1-\eps_1,1),
\quad
j=1,2,3,\ldots.
\end{displaymath}
The conclusion follows easily.
\end{proof}

Since the sequence $\{m_j(\eps_1)\beta\}$, $j=1,2,3,\ldots,$ is uniformly distributed in $[0,1)$,
it follows that for any $\delta>0$, there exists a subsequence $m_{j_1},m_{j_2},m_{j_3},\ldots$ such that for each term
$m_{j_t}=m_{j_t}(\eps_1;\delta)$, the inequality
\begin{displaymath}
\Vert m_{j_t}(\eps_1;\delta)\beta-u_0\Vert<\delta
\end{displaymath}
holds, where $u_0\in[0,1)$ is given by Lemma~\ref{lem61} and fixed.
Clearly there exist two successive convergents of~$\alpha$, with denominators $q_k(\alpha)$ and $q_{k+1}(\alpha)$ such that
\begin{equation}\label{eq6.5}
q_k(\alpha)\le m_{j_t}(\eps_1;\delta)<q_{k+1}(\alpha).
\end{equation}
We work with the denominator $q_k=q_k(\alpha)$ of a particular convergent of $\alpha$ that satisfies the inequalities \eqref{eq6.5},
and can choose $m_{j_t}(\eps_1;\delta)$ large enough so that $q_k$ is substantially larger than $\vert\III_0\vert^{-1}=H$,
where $\III_0$ is defined by \eqref{eq6.2}.

Write $N=m_{j_t}(\eps_1;\delta)$, so that
\begin{displaymath}
q_k(\alpha)\le N<q_{k+1}(\alpha).
\end{displaymath}

For every $\sigma=1,\ldots,d$ and $i=1,\ldots,q_{k+1}-1$, consider the $J_k$-type $z$-parallel strip
\begin{displaymath}
S_{\sigma,k}(i)=\{(x,y_\sigma,z)\in Y_\sigma:x\in J_k(i)\bmod{1}\},
\end{displaymath}
where the short special interval $J_k(i)$ is given by \eqref{eq3.4}.
Furthermore, consider the half $J_k$-type $z$-parallel strips
\begin{align}
&
S^-_{\sigma,k}(i)=\{(x,y_\sigma,z)\in Y_\sigma:x\in J^-_k(i)\bmod{1}\},
\nonumber
\\
&
S^+_{\sigma,k}(i)=\{(x,y_\sigma,z)\in Y_\sigma:x\in J^+_k(i)\bmod{1}\},
\nonumber
\end{align}
where
\begin{displaymath}
J^-_k(i)=\left(\{i\alpha\}-\frac{\Vert q_k\alpha\Vert}{2},\{i\alpha\}\right)
\quad\mbox{and}\quad
J^+_k(i)=\left(\{i\alpha\},\{i\alpha\}+\frac{\Vert q_k\alpha\Vert}{2}\right).
\end{displaymath}

Since $q_k$ is large compared to $\vert\III_0\vert^{-1}$, the $z$-parallel strip $\III_1\times\{y_1\}\times[0,1)\subset Y_1$
contains a significant number of strips of the form $S_{1,k}(i)$, where $i=1,\ldots,q_{k+1}-1$.

\begin{lemma}\label{lem63}
For every $i=1,\ldots,q_{k+1}-1$, at least one of the two $T_{\alpha,\beta}$-power chains
\begin{align}
&
T_{\alpha,\beta}^nS^-_{1,k}(i),
\quad
n=1,\ldots,N,
\label{eq6.6}
\\
&
T_{\alpha,\beta}^nS^+_{1,k}(i),
\quad
n=1,\ldots,N,
\label{eq6.7}
\end{align}
is splitting free.
\end{lemma}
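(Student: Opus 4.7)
The plan is to reduce the splitting condition for $S_{1,k}(i)$ to a single Diophantine inequality, and then use the best-approximation property of continued fraction convergents to show that at most one step in $\{1,\dots,N\}$ can cause a split, after which one of the two half-strips avoids it. The starting geometric observation is that every discontinuity of $T_{\alpha,\beta}$ in the $x$-direction, when projected modulo $1$, collapses to the single point $\{-\alpha\}$. Indeed, splits of $T_{\alpha,\beta}$ occur precisely when the $\bfv$-flow from a $Y$-face hits a $z$-parallel singular edge of $\MMM$, i.e., an edge of the form $\{v\}\times[0,1)$ where $v$ is a singular vertex of $\PPP$; backwards-flowing by one $y$-unit from such a vertex produces the critical $x$-coordinate $x_v-\alpha$ on the originating $Y$-face, and since the atomic cubes have integer corners, $x_v\in\Zz$, so this value reduces to $\{-\alpha\}$ modulo $1$. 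The backward flow may cross side-face identifications before reaching a $Y$-face, but each such crossing is a translation by an integer vector, so the $x$-coordinate modulo $1$ is preserved throughout.

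I would then translate the splitting condition at step $n\in\{1,\dots,N\}$ into a Diophantine inequality. Just before the $n$-th application of $T_{\alpha,\beta}$, the $x$-projection of the current strip is $J_k(i)+(n-1)\alpha$ modulo $1$, an interval centered at $\{(i+n-1)\alpha\}$ of width $\Vert q_k\alpha\Vert$, and it contains $\{-\alpha\}$ if and only if $\Vert(i+n)\alpha\Vert<\Vert q_k\alpha\Vert/2$. The best-approximation property of $q_k$ then shows that at most one such $n$ exists: if two distinct steps $n_1<n_2$ both satisfied the inequality, the triangle inequality would give $\Vert(n_2-n_1)\alpha\Vert<\Vert q_k\alpha\Vert$, but $n_2-n_1\in\{1,\dots,N-1\}\subseteq\{1,\dots,q_{k+1}-2\}$, where the best-approximation property yields $\Vert(n_2-n_1)\alpha\Vert\ge\Vert q_k\alpha\Vert$, a contradiction.

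Finally, I would identify the splitting-free half by examining the position of $\{-\alpha\}$ relative to the center at the unique split step $n_0$, if any. If no such $n_0$ exists, both half-strip power chains are trivially splitting free. Otherwise, $\{-\alpha\}$ lies strictly on one side of $\{(i+n_0-1)\alpha\}$ on $[0,1)$; since $J^-_k(i)$ shifts consistently to the left semi-interval and $J^+_k(i)$ to the right semi-interval of $J_k(i)+(n-1)\alpha$ at every step, the half-strip opposite to the side on which $\{-\alpha\}$ lies has its shifted $x$-projection at step $n_0$ not containing $\{-\alpha\}$; combined with the uniqueness of the split step, the corresponding $T_{\alpha,\beta}$-power chain is splitting free over $\{1,\dots,N\}$. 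The main obstacle I anticipate is the initial geometric step --- confirming cleanly that every discontinuity of $T_{\alpha,\beta}$ in $x$ projects modulo $1$ to the single value $\{-\alpha\}$, which requires carefully tracking the backward flow through possibly several cubes and the chain of side-face identifications to verify that only integer offsets in $x$ accumulate. Once this observation is secured, the continued fraction part is short and direct.
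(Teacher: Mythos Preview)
Your proposal is correct and follows essentially the same route as the paper: both reduce the splitting condition at step $n$ to $\Vert(i+n)\alpha\Vert<\Vert q_k\alpha\Vert/2$ and then use the best-approximation property of $q_k$ to control it. Your uniqueness argument via the triangle inequality is slightly more direct than the paper's case analysis and induction through the $2$-distance theorem, but the underlying Diophantine fact and the final selection of the surviving half are the same.
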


\begin{proof}
For any fixed $i=1,\ldots,q_{k+1}-1$, consider the $T_{\alpha,\beta}$-power chain
\begin{displaymath}
T_{\alpha,\beta}^nS_{1,k}(i),
\quad
n=1,\ldots,N.
\end{displaymath}
Either no member of this chain splits, or there is a smallest $n_0=1,\ldots,N$ satisfying $i+n_0\ge q_{k+1}$,
in view of the $2$-distance theorem, such that $T_{\alpha,\beta}^{n_0}S_{1,k}(i)$ splits.
In the latter case, we have
\begin{equation}\label{eq6.8}
\left(\{(i+n_0)\alpha\}-\frac{\Vert q_k\alpha\Vert}{2},\{(i+n_0)\alpha\}+\frac{\Vert q_k\alpha\Vert}{2}\right)\cap\{0,1\}\ne\emptyset.
\end{equation}

Suppose first that the intersection \eqref{eq6.8} is~$\{0\}$.
Then
\begin{displaymath}
0<\{(i+n_0)\alpha\}<\frac{\Vert q_k\alpha\Vert}{2}<\{(i+n_0)\alpha\}+\frac{\Vert q_k\alpha\Vert}{2}<1.
\end{displaymath}
Any point $(x,y,z)\in T_{\alpha,\beta}^{n_0}S^+_{1,k}(i)$ must satisfy
\begin{equation}\label{eq6.9}
x\in\left(\{(i+n_0)\alpha\},\{(i+n_0)\alpha\}+\frac{\Vert q_k\alpha\Vert}{2}\right)\bmod{1}.
\end{equation}
Since the interval in \eqref{eq6.9} is contained in $(0,1)$, it follows that $T_{\alpha,\beta}^{n_0}S^+_{1,k}(i)$ does not split.
We can now prove by induction that for every $n=n_0+1,\ldots,N$, any point $(x,y,z)\in T_{\alpha,\beta}^nS^+_{1,k}(i)$ must satisfy
\begin{equation}\label{eq6.10}
x\in\left(\{(i+n)\alpha\},\{(i+n)\alpha\}+\frac{\Vert q_k\alpha\Vert}{2}\right)\bmod{1}.
\end{equation}
The inequality $0<n-n_0<q_{k+1}-1$ and the $2$-distance theorem now ensure that the interval in \eqref{eq6.10} is contained in $(0,1)$.
It follows that $T_{\alpha,\beta}^nS^+_{1,k}(i)$ does not split.
We thus conclude that the $T_{\alpha,\beta}$-power chain \eqref{eq6.7} does not split.

Suppose next that the intersection \eqref{eq6.8} is~$\{1\}$.
Then a similar argument shows that the $T_{\alpha,\beta}$-power chain \eqref{eq6.6} does not split.
\end{proof}

We denote by
\textcolor{white}{xxxxxxxxxxxxxxxxxxxxxxxxxxxxxx}
\begin{equation}\label{eq6.11}
T_{\alpha,\beta}^nS^\dagger_{1,k}(i),
\quad
n=1,\ldots,N,
\end{equation}
one of the $T_{\alpha,\beta}$-power chain \eqref{eq6.6} and \eqref{eq6.7} that is splitting free.

It is clear from the properties of the short special intervals $J_k(i)$, $i=1,\ldots,q_{k+1}-1$,
that the sets $S^\dagger_{1,k}(i)$, $i=1,\ldots,q_{k+1}-1$, that are contained in $\III_1\times\{y_1\}\times[0,1)\subset Y_1$
together occupy more than $1/10$ of $\III_1\times\{y_1\}\times[0,1)$, and so have total area exceeding $1/10H$.
For each of these sets, consider the last element of the chain \eqref{eq6.11}, of the form
\textcolor{white}{xxxxxxxxxxxxxxxxxxxxxxxxxxxxxx}
\begin{displaymath}
T_{\alpha,\beta}^NS^\dagger_{1,k}(i)\subset Y_\sigma
\quad\mbox{for some $\sigma=1,\ldots,d$}.
\end{displaymath}
Suppose that $Y_{\sigma_0}$ contains the maximum number of such images, and let $\frakI(\sigma_0)$
denote the collection of indices $i=1,\ldots,q_{k+1}-1$ such that $T_{\alpha,\beta}^NS^\dagger_{1,k}(i)\subset Y_{\sigma_0}$
and $S^\dagger_{1,k}(i)\subset\III_1\times\{y_1\}\times[0,1)$.
Then
\begin{equation}\label{eq6.12}
\bigcup_{i\in\frakI(\sigma_0)}T_{\alpha,\beta}^NS^\dagger_{1,k}(i)\subset Y_{\sigma_0}
\quad\mbox{and}\quad
\lambda_2\left(\bigcup_{i\in\frakI(\sigma_0)}T_{\alpha,\beta}^NS^\dagger_{1,k}(i)\right)\ge\frac{1}{10dH}.
\end{equation}

\begin{lemma}\label{lem64}
We have
\begin{equation}\label{eq6.13}
\lambda_2\left((\WWW\bigtriangleup\WWW_1)\cap\bigcup_{i\in\frakI(\sigma_0)}T_{\alpha,\beta}^NS^\dagger_{1,k}(i)\right)
<2\eps_1+20\eps d\lambda_2\left(\bigcup_{i\in\frakI(\sigma_0)}T_{\alpha,\beta}^NS^\dagger_{1,k}(i)\right),
\end{equation}
and
\textcolor{white}{xxxxxxxxxxxxxxxxxxxxxxxxxxxxxx}
\begin{equation}\label{eq6.14}
\lambda_2\left((\WWW\bigtriangleup\WWW_1)\cap\bigcup_{i\in\frakI(\sigma_0)}S^\dagger_{1,k}(i)\right)
<20\eps d\lambda_2\left(\bigcup_{i\in\frakI(\sigma_0)}S^\dagger_{1,k}(i)\right).
\end{equation}
\end{lemma}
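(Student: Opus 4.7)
The plan is to establish \eqref{eq6.14} first, as an essentially direct consequence of Property~B combined with the lower bound in \eqref{eq6.12}, and then derive \eqref{eq6.13} by the same argument applied on the target face $Y_{\sigma_0}$, with an additive correction $2\eps_1$ arising from the fact that $T_{\alpha,\beta}^N$ rotates the $x$-coordinate modulo one by an amount of magnitude at most $\eps_1$.

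For \eqref{eq6.14}, I will observe that by construction the union $\bigcup_{i\in\frakI(\sigma_0)}S^\dagger_{1,k}(i)$ is contained in the strip $\III_1\times\{y_1\}\times[0,1)\subset Y_1$, so that Property~B with $\sigma=1$ gives immediately
\begin{displaymath}
\lambda_2\biggl((\WWW\bigtriangleup\WWW_1)\cap\bigcup_{i\in\frakI(\sigma_0)}S^\dagger_{1,k}(i)\biggr)<\frac{2\eps}{H}.
\end{displaymath}
The right half of \eqref{eq6.12} then forces $20\eps d\cdot\lambda_2(\bigcup_{i\in\frakI(\sigma_0)}S^\dagger_{1,k}(i))\ge 2\eps/H$, and combining these two estimates yields \eqref{eq6.14}.

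For \eqref{eq6.13}, the key observation I will exploit is that, since the power chain is splitting free by Lemma~\ref{lem63}, the map $T_{\alpha,\beta}^N$ acts on $S^\dagger_{1,k}(i)$ as a rigid translation, shifting the $x$-coordinate by $N\alpha$ modulo one and the $z$-coordinate by $N\beta$ modulo one, with the face identifications contributing only integer shifts. Since $\|N\alpha\|<\eps_1$ by \eqref{eq6.4} and $\III_{\sigma_0}\equiv\III_1\bmod 1$, the $x$-coordinates of the image $T_{\alpha,\beta}^N S^\dagger_{1,k}(i)\subset Y_{\sigma_0}$ must lie within circle-distance $\eps_1$ of $\III_{\sigma_0}$. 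Writing $\III_{\sigma_0}^{(\eps_1)}$ for the $\eps_1$-thickening of $\III_{\sigma_0}$, the decomposition
\begin{displaymath}
\III_{\sigma_0}^{(\eps_1)}=\III_{\sigma_0}\cup(\III_{\sigma_0}^{(\eps_1)}\setminus\III_{\sigma_0})
\end{displaymath}
then lets me bound $\lambda_2\bigl((\WWW\bigtriangleup\WWW_1)\cap\bigcup T_{\alpha,\beta}^N S^\dagger_{1,k}(i)\bigr)$ by $2\eps/H$ on the first piece via Property~B with $\sigma=\sigma_0$, and by at most $2\eps_1$ on the collar (whose total area on $Y_{\sigma_0}$ is at most $2\eps_1$). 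Measure preservation of $T_{\alpha,\beta}$ together with \eqref{eq6.12} gives $20\eps d\cdot\lambda_2(\bigcup T_{\alpha,\beta}^N S^\dagger_{1,k}(i))\ge 2\eps/H$, so the total $\frac{2\eps}{H}+2\eps_1\le 2\eps_1+20\eps d\cdot\lambda_2(\bigcup T_{\alpha,\beta}^N S^\dagger_{1,k}(i))$ yields \eqref{eq6.13}.

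The principal point requiring care is the claim that the splitting-free iteration really does reduce modulo one to a rigid rotation by $(N\alpha,N\beta)$ in the $(x,z)$-coordinates on the target face, with no stray contributions. This is built into the polycube translation structure---face identifications are unit translations perpendicular to the face and therefore disappear modulo one---but the absence of splittings guaranteed by Lemma~\ref{lem63} is precisely what rules out the boundary overshoots of size $\|q_k\alpha\|/2$ that would otherwise break the rigidity. Once this is confirmed, the rest is routine measure-theoretic bookkeeping resting on Property~B, the measure preservation of $T_{\alpha,\beta}$, and the lower bound \eqref{eq6.12}.
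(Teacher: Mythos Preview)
Your proposal is correct and follows essentially the same approach as the paper's own proof. The only cosmetic difference is that the paper establishes \eqref{eq6.13} first and then \eqref{eq6.14}, whereas you reverse the order; in both cases the argument rests on Property~B, the lower bound in \eqref{eq6.12} (carried to the pre-image via measure preservation), and the observation that the overflow of $\bigcup_{i\in\frakI(\sigma_0)}T_{\alpha,\beta}^NS^\dagger_{1,k}(i)$ outside $\III_{\sigma_0}\times\{y_{\sigma_0}\}\times[0,1)$ has area at most~$2\eps_1$ because $\Vert N\alpha\Vert<\eps_1$.
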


\begin{proof}
For every $i\in\frakI(\sigma_0)$, the condition $S^\dagger_{1,k}(i)\subset\III_1\times\{y_1\}\times[0,1)$,
together with \eqref{eq6.4} and $N=m_{j_t}(\eps_1;\delta)$, ensure
that the total area of the images in \eqref{eq6.12} that fall outside the subset
$\III_{\sigma_0}\times\{y_{\sigma_0}\}\times[0,1)\subset Y_{\sigma_0}$ does not exceed~$2\eps_1$.
It then follows from \eqref{eq6.3} with $\sigma=\sigma_0$ and the second inequality in \eqref{eq6.12} that
\begin{align}
&
\lambda_2\left((\WWW\bigtriangleup\WWW_1)\cap\bigcup_{i\in\frakI(\sigma_0)}T_{\alpha,\beta}^NS^\dagger_{1,k}(i)\right)
\nonumber
\\
&\quad
\le\lambda_2\left(\left(\bigcup_{i\in\frakI(\sigma_0)}T_{\alpha,\beta}^NS^\dagger_{1,k}(i)\right)
\setminus(\III_{\sigma_0}\times\{y_{\sigma_0}\}\times[0,1))\right)
\nonumber
\\
&\quad\qquad
+\lambda_2((\WWW\bigtriangleup\WWW_1)\cap(\III_{\sigma_0}\times\{y_{\sigma_0}\}\times[0,1)))
\nonumber
\\
&\quad
<2\eps_1+\frac{2\eps}{H}
\le2\eps_1+20\eps d\lambda_2\left(\bigcup_{i\in\frakI(\sigma_0)}T_{\alpha,\beta}^NS^\dagger_{1,k}(i)\right),
\nonumber
\end{align}
establishing \eqref{eq6.13}.

To establish \eqref{eq6.14}, note from the second inequality in \eqref{eq6.12} that
\begin{equation}\label{eq6.15}
\lambda_2\left(\bigcup_{i\in\frakI(\sigma_0)}S^\dagger_{1,k}(i)\right)\ge\frac{1}{10dH}.
\end{equation}
It then follows from \eqref{eq6.3} with $\sigma=1$ and \eqref{eq6.15} that
\begin{align}
&
\lambda_2\left((\WWW\bigtriangleup\WWW_1)\cap\bigcup_{i\in\frakI(\sigma_0)}S^\dagger_{1,k}(i)\right)
\le\lambda_2((\WWW\bigtriangleup\WWW_1)\cap(\III_1\times\{y_1\}\times[0,1)))
\nonumber
\\
&\quad
<\frac{2\eps}{H}
\le20\eps d\lambda_2\left(\bigcup_{i\in\frakI(\sigma_0)}S^\dagger_{1,k}(i)\right).
\nonumber
\end{align}

This completes the proof.
\end{proof}

\begin{lemma}\label{lem65}
Let $x_1$ be the $x$-coordinate of a point of $S^\dagger_{1,k}(i)$ for some $i\in\frakI(\sigma_0)$.
Then
\textcolor{white}{xxxxxxxxxxxxxxxxxxxxxxxxxxxxxx}
\begin{displaymath}
N\beta+\cs_1(x_1;\WWW)=\cs_{\sigma_0}(x_{\sigma_0};\WWW),
\end{displaymath}
where
\textcolor{white}{xxxxxxxxxxxxxxxxxxxxxxxxxxxxxx}
\begin{equation}\label{eq6.16}
\Vert N\beta-u_0\Vert<\delta
\end{equation}
and $x_{\sigma_0}$ is the $x$-coordinate of a point of $T_{\alpha,\beta}^NS^\dagger_{1,k}(i)$
satisfying $\{x_{\sigma_0}\}=\{x_1+N\alpha\}$.
\end{lemma}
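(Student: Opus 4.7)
The plan is to reduce the identity to a rigid-motion statement about the action of $T_{\alpha,\beta}^N$ on the splitting-free strip $S^\dagger_{1,k}(i)$, and then invoke $T_{\alpha,\beta}$-invariance of $\WWW$ to transport cross-sections.

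First I would unpack what $T_{\alpha,\beta}$ does as a Poincar\'e return map of the $\bfv$-flow with $\bfv=(\alpha,1,\beta)$ to the horizontal faces $Y_1,\ldots,Y_d$. Because the gap in the $y$-direction between two consecutive $Y$-hits is always exactly $1$ (the height of an atomic cube) and $\bfv$ has second coordinate $1$, the corresponding increments in the $x$- and $z$-coordinates are exactly $\alpha$ and $\beta$ respectively, modulo the translation identifications of $\MMM$. Consequently, on any splitting-free piece, $T_{\alpha,\beta}$ acts as the rigid translation $(x,z)\mapsto(x+\alpha,z+\beta)$ modulo $1$; and since $\MMM=\PPP\times[0,1)$, the $z$-direction is integrable and the $\beta$-shift is \emph{never} affected by any face identification, regardless of splitting.

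Next I would apply Lemma~\ref{lem63}: the chain \eqref{eq6.11} is splitting free, so $T_{\alpha,\beta}^N$ acts on $S^\dagger_{1,k}(i)$ as a single Euclidean translation that carries it onto $T_{\alpha,\beta}^NS^\dagger_{1,k}(i)\subset Y_{\sigma_0}$, namely
\begin{displaymath}
(x_1,y_1,z)\ \longmapsto\ (x_{\sigma_0},y_{\sigma_0},\{z+N\beta\}),
\end{displaymath}
where $x_{\sigma_0}$ is the unique point of the $x$-range of the image face satisfying $\{x_{\sigma_0}\}=\{x_1+N\alpha\}$. Now fix $x_1$ in the $x$-range of $S^\dagger_{1,k}(i)$. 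Because $\WWW$ is $T_{\alpha,\beta}$-invariant, we have $(x_1,y_1,z)\in\WWW$ if and only if $(x_{\sigma_0},y_{\sigma_0},\{z+N\beta\})\in\WWW$, which is precisely the statement
\begin{displaymath}
z\in\cs_1(x_1;\WWW)\iff\{z+N\beta\}\in\cs_{\sigma_0}(x_{\sigma_0};\WWW),
\end{displaymath}
i.e.\ $N\beta+\cs_1(x_1;\WWW)=\cs_{\sigma_0}(x_{\sigma_0};\WWW)$ modulo $1$, as required.

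Finally, the inequality \eqref{eq6.16} is not something to prove but merely to record: it is the defining property of $N=m_{j_t}(\eps_1;\delta)$, chosen in the subsequence extracted just after Lemma~\ref{lem62} precisely so that $\{N\beta\}$ lies within $\delta$ of the point $u_0\in[0,1)$ supplied by Lemma~\ref{lem61}. There is no substantive obstacle in the proof; the only point requiring care is the uniformity of the $z$-shift along the whole cross-section, and this uniformity is exactly what the splitting-free conclusion of Lemma~\ref{lem63}, together with the integrability of the $z$-direction in $\MMM=\PPP\times[0,1)$, guarantees.
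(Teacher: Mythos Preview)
Your proof is correct and follows essentially the same approach as the paper's own sketch: both argue that $T_{\alpha,\beta}^N$ acts on the splitting-free strip as the rigid shift $(x,z)\mapsto(x+N\alpha,z+N\beta)$ modulo~$1$, then invoke the $T_{\alpha,\beta}$-invariance of $\WWW$ to identify the cross-sections, and simply record that \eqref{eq6.16} was built into the choice of~$N$. Your write-up is more explicit than the paper's sketch (in particular, you spell out why the $z$-shift is uniform across the whole cross-section via the integrability of the $z$-direction), but there is no difference in substance.
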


\begin{proof}[Sketch of proof]
Note that $T_{\alpha,\beta}^N$ gives rise to a shift in the $z$-direction by $N\beta\bmod{1}$
and a shift in the $x$-direction by $N\alpha\bmod{1}$.
Note also that $\WWW$ is invariant under $T_{\alpha,\beta}$.
We have earlier chosen $N$ to satisfy \eqref{eq6.16} and $\Vert N\alpha\Vert<\eps_1$.
\end{proof}

We are interested in the set
\begin{equation}\label{eq6.17}
\XXX_0=\left\{x:(x,y_1,z)\in\bigcup_{i\in\frakI(\sigma_0)}S^\dagger_{1,k}(i)\mbox{ for some $z\in[0,1)$}\right\}
\end{equation}
of the $x$-coordinates of points on $Y_1$ and the set
\begin{equation}\label{eq6.18}
\XXX_N=\left\{x:(x,y_{\sigma_0},z)\in\bigcup_{i\in\frakI(\sigma_0)}T_{\alpha,\beta}^NS^\dagger_{1,k}(i)\mbox{ for some $z\in[0,1)$}\right\}
\end{equation}
of the $x$-coordinates of points in~$Y_{\sigma_0}$.

Let $x_1\in\XXX_0$ and $x_{\sigma_0}\in\XXX_N$ be chosen arbitrarily.
Applying Lemma~\ref{lem61} with $\sigma=\sigma_0$ to the sets
\textcolor{white}{xxxxxxxxxxxxxxxxxxxxxxxxxxxxxx}
\begin{equation}\label{eq6.19}
U_1=\cs_1(x_1;\WWW_1)
\quad\mbox{and}\quad
U_{\sigma_0}=\cs_{\sigma_0}(x_{\sigma_0};\WWW_1),
\end{equation}
we deduce that
\textcolor{white}{xxxxxxxxxxxxxxxxxxxxxxxxxxxxxx}
\begin{displaymath}
\lambda_1(U_{\sigma_0}\bigtriangleup(u_0+U_1))\ge\frac{1}{32d^2}\lambda_1(U_1)(1-\lambda_1(U_1)).
\end{displaymath}
By the continuity of the Lebesgue measure, for any fixed $\eta>0$, if $\delta>0$ in \eqref{eq6.16} is sufficiently small, then
\begin{displaymath}
\lambda_1(U_{\sigma_0}\bigtriangleup(N\beta+U_1))\ge\frac{1}{32d^2}\lambda_1(U_1)(1-\lambda_1(U_1))-\eta.
\end{displaymath}
Choosing
\textcolor{white}{xxxxxxxxxxxxxxxxxxxxxxxxxxxxxx}
\begin{displaymath}
\eta=\frac{1}{64d^2}\lambda_1(U_1)(1-\lambda_1(U_1)),
\end{displaymath}
we deduce that
\textcolor{white}{xxxxxxxxxxxxxxxxxxxxxxxxxxxxxx}
\begin{equation}\label{eq6.20}
\lambda_1(U_{\sigma_0}\bigtriangleup(N\beta+U_1))\ge\frac{1}{64d^2}\lambda_1(U_1)(1-\lambda_1(U_1)),
\end{equation}
provided that $\delta>0$ is sufficiently small.

\begin{lemma}\label{lem66}
We have
\begin{equation}\label{eq6.21}
\lambda_1(\{x\in\XXX_N:\lambda_1(\cs_{\sigma_0}(x;\WWW)\bigtriangleup\cs_{\sigma_0}(x;\WWW_1))>300\eps d\})
<\frac{1}{10}\lambda_1(\XXX_N),
\end{equation}
provided that $\eps_1$ is sufficiently small.
We also have
\begin{equation}\label{eq6.22}
\lambda_1(\{x\in\XXX_0:\lambda_1(\cs_1(x;\WWW)\bigtriangleup\cs_1(x;\WWW_1))>200\eps d\})
<\frac{1}{10}\lambda_1(\XXX_0).
\end{equation}
\end{lemma}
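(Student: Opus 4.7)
The plan is to reduce both inequalities \eqref{eq6.21} and \eqref{eq6.22} to Markov's inequality applied to one-dimensional ``defect'' functions, after first bounding their $L^1$-integrals via Fubini together with the two estimates \eqref{eq6.13} and \eqref{eq6.14} of Lemma~\ref{lem64}.

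First I would pin down the key geometric fact that, because each $S^\dagger_{1,k}(i)$ is a $z$-parallel strip of the form $J^\pm_k(i)\times\{y_1\}\times[0,1)$ and because $T_{\alpha,\beta}^N$ acts as a rigid translation on splitting-free sets (Lemma~\ref{lem63}), each image $T_{\alpha,\beta}^NS^\dagger_{1,k}(i)$ is again a union of $z$-parallel strips on~$Y_{\sigma_0}$.  Consequently, by the very definitions \eqref{eq6.17} and \eqref{eq6.18},
\begin{displaymath}
\bigcup_{i\in\frakI(\sigma_0)}S^\dagger_{1,k}(i)=\XXX_0\times\{y_1\}\times[0,1)
\quad\mbox{and}\quad
\bigcup_{i\in\frakI(\sigma_0)}T_{\alpha,\beta}^NS^\dagger_{1,k}(i)=\XXX_N\times\{y_{\sigma_0}\}\times[0,1),
\end{displaymath}
so that, writing
\begin{displaymath}
g_1(x)=\lambda_1(\cs_1(x;\WWW)\bigtriangleup\cs_1(x;\WWW_1))
\quad\mbox{and}\quad
g_{\sigma_0}(x)=\lambda_1(\cs_{\sigma_0}(x;\WWW)\bigtriangleup\cs_{\sigma_0}(x;\WWW_1)),
\end{displaymath}
Fubini's theorem converts each $2$-dimensional estimate into a $1$-dimensional integral bound. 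Indeed, for almost every $x$ in the relevant projection, the $z$-fibre of the union above is the full interval $[0,1)$, so the symmetric difference $\cs_\sigma(x;\WWW)\bigtriangleup\cs_\sigma(x;\WWW_1)$ coincides with the $z$-slice of $\WWW\bigtriangleup\WWW_1$ at~$x$.

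Next I would combine these Fubini identities with \eqref{eq6.13} and \eqref{eq6.14}. The latter gives directly
\begin{displaymath}
\int_{\XXX_0}g_1(x)\,\dd x<20\eps d\,\lambda_1(\XXX_0),
\end{displaymath}
and Markov's inequality with threshold $200\eps d$ yields \eqref{eq6.22} at once. For \eqref{eq6.21} the analogous Fubini argument, combined with $\lambda_2(\bigcup T_{\alpha,\beta}^NS^\dagger_{1,k}(i))=\lambda_1(\XXX_N)$, gives
\begin{displaymath}
\int_{\XXX_N}g_{\sigma_0}(x)\,\dd x<2\eps_1+20\eps d\,\lambda_1(\XXX_N).
\end{displaymath}
Since \eqref{eq6.12} supplies the lower bound $\lambda_1(\XXX_N)\ge1/(10dH)$, and since $H$ is determined by $\WWW_1$ and fixed before $\eps_1$ is chosen, one can select $\eps_1$ small enough that $2\eps_1\le10\eps d\,\lambda_1(\XXX_N)$; the integral is then at most $30\eps d\,\lambda_1(\XXX_N)$, and Markov's inequality with threshold $300\eps d$ delivers \eqref{eq6.21}. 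The numerical factors $300$ and $200$ in the statement are tuned precisely so that the resulting probability equals $1/10$.

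The only real subtlety I anticipate is checking that the displayed identities for the two unions hold \emph{exactly} as sets rather than merely up to a small error: this requires verifying that the power chain in Lemma~\ref{lem63} remains splitting-free throughout the entire interval $n=1,\ldots,N$, so that $T_{\alpha,\beta}^N$ genuinely acts as a pure shift on each $S^\dagger_{1,k}(i)$ with $i\in\frakI(\sigma_0)$, preserving the ``full $z$-fibre'' property that makes Fubini yield precisely the defect functions $g_1$ and $g_{\sigma_0}$.  Once this geometric picture is established, everything else is the Markov--Fubini template sketched above, with the $2\eps_1$ in \eqref{eq6.13} absorbed by the smallness assumption on~$\eps_1$.
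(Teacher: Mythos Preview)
Your proposal is correct and follows essentially the same route as the paper: Fubini converts the two-dimensional estimates \eqref{eq6.13} and \eqref{eq6.14} into bounds on $\int_{\XXX_N}g_{\sigma_0}$ and $\int_{\XXX_0}g_1$, the $2\eps_1$ term is absorbed via the smallness condition $\eps_1<5\eps d\,\lambda_1(\XXX_N)$ (your $2\eps_1\le10\eps d\,\lambda_1(\XXX_N)$), and Markov's inequality with thresholds $300\eps d$ and $200\eps d$ finishes. Your explicit discussion of why the unions are exactly $\XXX_0\times\{y_1\}\times[0,1)$ and $\XXX_N\times\{y_{\sigma_0}\}\times[0,1)$ via the splitting-free property is a point the paper leaves implicit.
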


\begin{proof}
First, note that
\begin{displaymath}
\int_{\XXX_N}\lambda_1(\cs_{\sigma_0}(x;\WWW)\bigtriangleup\cs_{\sigma_0}(x;\WWW_1))\,\dd x
=\lambda_2\left((\WWW\bigtriangleup\WWW_1)\cap\bigcup_{i\in\frakI(\sigma_0)}T_{\alpha,\beta}^NS^\dagger_{1,k}(i)\right).
\end{displaymath}
Combining this with \eqref{eq6.13} and noting
\begin{displaymath}
\lambda_2\left(\bigcup_{i\in\frakI(\sigma_0)}T_{\alpha,\beta}^NS^\dagger_{1,k}(i)\right)=\lambda_1(\XXX_N),
\end{displaymath}
we conclude that
\begin{displaymath}
\int_{\XXX_N}\lambda_1(\cs_{\sigma_0}(x;\WWW)\bigtriangleup\cs_{\sigma_0}(x;\WWW_1))\,\dd x
<2\eps_1+20\eps d\lambda_1(\XXX_N)
<30\eps d\lambda_1(\XXX_N)
\end{displaymath}
provided that
\textcolor{white}{xxxxxxxxxxxxxxxxxxxxxxxxxxxxxx}
\begin{equation}\label{eq6.23}
\eps_1<5\eps d\lambda_1(\XXX_N).
\end{equation}
The inequality \eqref{eq6.21} follows immediately from this.

Next, note that
\begin{displaymath}
\int_{\XXX_0}\lambda_1(\cs_1(x;\WWW)\bigtriangleup\cs_1(x;\WWW_1))\,\dd x
=\lambda_2\left((\WWW\bigtriangleup\WWW_1)\cap\bigcup_{i\in\frakI(\sigma_0)}S^\dagger_{1,k}(i)\right).
\end{displaymath}
Combining this with \eqref{eq6.14} and noting
\begin{displaymath}
\lambda_2\left(\bigcup_{i\in\frakI(\sigma_0)}S^\dagger_{1,k}(i)\right)=\lambda_1(\XXX_0),
\end{displaymath}
we conclude that
\textcolor{white}{xxxxxxxxxxxxxxxxxxxxxxxxxxxxxx}
\begin{displaymath}
\int_{\XXX_0}\lambda_1(\cs_1(x;\WWW)\bigtriangleup\cs_1(x;\WWW_1))\,\dd x
<20\eps d\lambda_1(\XXX_0).
\end{displaymath}
The inequality \eqref{eq6.22} follows immediately from this.
\end{proof}

It now follows from Lemma~\ref{lem66} that there exist $x_1^\star\in\XXX_0$ and $x_{\sigma_0}^\star\in\XXX_N$
satisfying $\{x_{\sigma_0}^\star\}=\{x_1^\star+N\alpha\}$ such that
\begin{equation}\label{eq6.24}
\lambda_1(\cs_{\sigma_0}(x_{\sigma_0}^\star;\WWW)\bigtriangleup\cs_{\sigma_0}(x_{\sigma_0}^\star;\WWW_1))\le300\eps d,
\end{equation}
provided that \eqref{eq6.23} holds, and
\begin{equation}\label{eq6.25}
\lambda_1(\cs_1(x_1^\star;\WWW)\bigtriangleup\cs_1(x_1^\star;\WWW_1))\le200\eps d.
\end{equation}

Combining \eqref{eq6.19} and \eqref{eq6.20}, we have
\begin{align}\label{eq6.26}
\frac{1}{64d^2}\lambda_1(U_1)(1-\lambda_1(U_1))
&
\le\lambda_1(\cs_{\sigma_0}(x_{\sigma_0};\WWW_1)\bigtriangleup(N\beta+\cs_1(x_1;\WWW_1)))
\nonumber
\\
&
=\lambda_1(\cs_{\sigma_0}(x_{\sigma_0}^\star;\WWW_1)\bigtriangleup(N\beta+\cs_1(x_1^\star;\WWW_1))),
\end{align}
where the last step is justified by Property~A.
Next, by the triangle inequality
\begin{displaymath}
\lambda_1(A\bigtriangleup C)\le\lambda_1(A\bigtriangleup B)+\lambda_1(B\bigtriangleup C),
\end{displaymath}
valid for measurable sets $A,B,C\subset[0,1)$, we have
\begin{align}\label{eq6.27}
&
\lambda_1(\cs_{\sigma_0}(x_{\sigma_0}^\star;\WWW_1)\bigtriangleup(N\beta+\cs_1(x_1^\star;\WWW_1)))
\nonumber
\\
&\quad
\le\lambda_1(\cs_{\sigma_0}(x_{\sigma_0}^\star;\WWW_1)\bigtriangleup\cs_{\sigma_0}(x_{\sigma_0}^\star;\WWW))
\nonumber
\\
&\quad\qquad
+\lambda_1(\cs_{\sigma_0}(x_{\sigma_0}^\star;\WWW)\bigtriangleup(N\beta+\cs_1(x_1^\star;\WWW)))
\nonumber
\\
&\quad\qquad
+\lambda_1((N\beta+\cs_1(x_1^\star;\WWW))\bigtriangleup(N\beta+\cs_1(x_1^\star;\WWW_1)))
\nonumber
\\
&\quad
\le\lambda_1(\cs_{\sigma_0}(x_{\sigma_0}^\star;\WWW_1)\bigtriangleup\cs_{\sigma_0}(x_{\sigma_0}^\star;\WWW))
\nonumber
\\
&\quad\qquad
+\lambda_1((N\beta+\cs_1(x_1^\star;\WWW))\bigtriangleup(N\beta+\cs_1(x_1^\star;\WWW_1))),
\end{align}
where the last step is justified by Lemma~\ref{lem65}.

Combining \eqref{eq6.24}--\eqref{eq6.27}, we now conclude that if \eqref{eq6.23} holds, then
\begin{equation}\label{eq6.28}
\frac{1}{64d^2}\lambda_1(U_1)(1-\lambda_1(U_1))\le500\eps d.
\end{equation}
It remains to specify the parameters $\eps$ and~$\eps_1$.

The assumption that the discrete transformation $T_{\alpha,\beta}$ is not ergodic also implies that
the $\bfv$-flow in $\MMM$ is not ergodic.
Thus Lemma~\ref{lem53} implies that
\begin{displaymath}
\frac{1}{d}\le\lambda_1(\cs_1(x_1^\star;\WWW))\le\frac{d-1}{d}.
\end{displaymath}
Combining this with \eqref{eq6.25}, we deduce that
\begin{displaymath}
\frac{1}{d}-200\eps d\le\lambda_1(\cs_1(x_1^\star;\WWW_1))\le\frac{d-1}{d}+200\eps d.
\end{displaymath}
Choosing $\eps$ to satisfy
\textcolor{white}{xxxxxxxxxxxxxxxxxxxxxxxxxxxxxx}
\begin{equation}\label{eq6.29}
200\eps d\le\frac{1}{2d},
\end{equation}
and noting that $U_1=\cs_1(x_1^\star;\WWW_1)$, we deduce that
\begin{displaymath}
\frac{1}{2d}\le\lambda_1(U_1)\le1-\frac{1}{2d}.
\end{displaymath}
Hence
\textcolor{white}{xxxxxxxxxxxxxxxxxxxxxxxxxxxxxx}
\begin{equation}\label{eq6.30}
\frac{1}{64d^2}\lambda_1(U_1)(1-\lambda_1(U_1))\ge\frac{1}{64d^2}\left(\frac{1}{2d}\right)^2.
\end{equation}
Clearly \eqref{eq6.28} and \eqref{eq6.30} contradict each other if
\begin{displaymath}
\eps<\frac{1}{128000d^5}.
\end{displaymath}
This is stronger than the restriction \eqref{eq6.29}.

Finally, note from \eqref{eq6.17} and \eqref{eq6.18} that
\begin{displaymath}
\lambda_1(\XXX_N)=\lambda_1(\XXX_0)\le\vert\III_1\vert=\vert\III_0\vert=\frac{1}{H},
\end{displaymath}
where $H=H(\WWW_1)$ is chosen sufficiently large.
Here $\WWW_1$ depends on~$\eps$, in view of \eqref{eq6.1}.
Clearly $\eps_1$ can be chosen sufficiently small in terms of $\eps$ so that the condition \eqref{eq6.23} is satisfied.

This gives ergodicity in Theorem~\ref{thm3}.
Unique ergodicity can then be established by a straightforward adaptation of the argument in Section~\ref{sec4}.

%
%

\section{Proof of Lemma~\ref{lem61}}\label{sec7}

For every $\sigma=1,\ldots,d$, let
\begin{displaymath}
\chi_{U_\sigma}(x)=\left\{\begin{array}{ll}
1,&\mbox{if $x\in U_\sigma$},\\
0,&\mbox{if $x\not\in U_\sigma$},
\end{array}\right.
\end{displaymath}
and
\textcolor{white}{xxxxxxxxxxxxxxxxxxxxxxxxxxxxxx}
\begin{displaymath}
\chi_{u+U_1}(x)=\left\{\begin{array}{ll}
1,&\mbox{if $x\in u+U_1$},\\
0,&\mbox{if $x\not\in u+U_1$},
\end{array}\right.
\end{displaymath}
denote respectively the characteristic functions of $U_\sigma$ and~$u+U_1$.
Then it is not difficult to see that
\begin{equation}\label{eq7.1}
\lambda_1(U_\sigma\bigtriangleup(u_0+U_1))=\int_0^1(\chi_{U_\sigma}(x)-\chi_{u+U_1}(x))^2\,\dd x.
\end{equation}
On the other hand, the $n$-th Fourier coefficient of $\chi_{U_\sigma}$ and of $\chi_{u+U_1}$ are respectively
\begin{displaymath}
a_{n;\sigma}=\int_0^1\chi_{U_\sigma}(x)\ee^{-2\pi\ii nx}\,\dd x
\end{displaymath}
and
\textcolor{white}{xxxxxxxxxxxxxxxxxxxxxxxxxxxxxx}
\begin{displaymath}
\int_0^1\chi_{u+U_1}(x)\ee^{-2\pi\ii nx}\,\dd x
=\ee^{-2\pi\ii nu}\int_0^1\chi_{U_1}(x-u)\ee^{-2\pi\ii n(x-u)}\,\dd x
=\ee^{-2\pi\ii nu}a_{n;1}.
\end{displaymath}
Applying the Parseval formula to the right hand side of \eqref{eq7.1} then gives
\begin{equation}\label{eq7.2}
\lambda_1(U_\sigma\bigtriangleup(u_0+U_1))=\sum_{n\in\Zz}\vert a_{n;\sigma}-\ee^{-2\pi\ii nu}a_{n;1}\vert^2.
\end{equation}

Suppose that $n\in\Zz$ is non-zero.
The complex valued function $\ee^{-2\pi\ii nu}a_{n;1}$ is periodic in $u\in\Rr$ with period $\vert n\vert^{-1}$.
Indeed, its values over a period forms a circle
\begin{displaymath}
\Circle(n;1)=\{\ee^{-2\pi\ii nu}a_{n;1}:0\le u<\vert n\vert^{-1}\}
\end{displaymath}
on the complex plane, as shown on the left in Figure~7.1.

\begin{displaymath}
\begin{array}{c}
\includegraphics[scale=0.8]{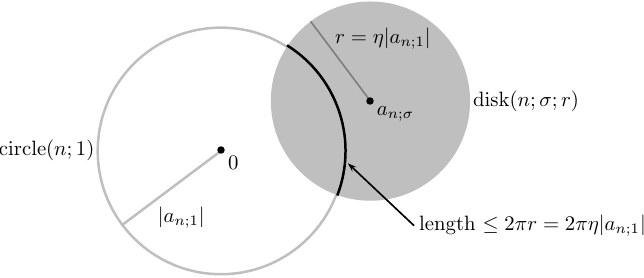}
\vspace{3pt}\\
\mbox{Figure 7.1: $\Circle(n;1)$ and $\Disk(n;\sigma;r)$}
\end{array}
\end{displaymath}

Let $\Disk(n;\sigma;r)$ denote the circular disk on the complex plane centred at $a_{n;\sigma}$ and with radius $r>0$,
as shown on the right in Figure~7.1.
The intersection of $\Circle(n;1)$ and $\Disk(n;\sigma;r)$, if not empty, is a circular arc within $\Disk(n;\sigma;r)$ with length clearly
less than the circumference $2\pi r$ of the disk.

We let $r=\eta\vert a_{n;1}\vert$, where the parameter $\eta\in(0,1)$ will be determined later.
Now partition $\Circle(n;1)$ into two parts, one disjoint from $\Disk(n;\sigma;\eta\vert a_{n;1}\vert)$ and the other contained
in $\Disk(n;\sigma;\eta\vert a_{n;1}\vert)$.
It is then clear that the inequality
\begin{equation}\label{eq7.3}
\vert a_{n;\sigma}-\ee^{-2\pi\ii nu}a_{n;1}\vert^2\ge\eta^2\vert a_{n;1}\vert^2
\end{equation}
holds for $u\in[0,1)$, with the possible exception of a subset
\begin{equation}\label{eq7.4}
\bad(n;\sigma;\eta)\subset[0,1)
\quad\mbox{with}\quad
\lambda_1(\bad(n;\sigma;\eta))\le\eta.
\end{equation}

For every $n\in\Zz\setminus\{0\}$, every $\sigma=1,\ldots,d$ and every $u\in[0,1)$, let
\begin{equation}\label{eq7.5}
F_{n;\sigma;\eta}(u)=\left\{\begin{array}{ll}
\vert a_{n;1}\vert^2,&\mbox{if $u\in\bad(n;\sigma;\eta)$},\\
0,&\mbox{if $u\not\in\bad(n;\sigma;\eta)$}.
\end{array}\right.
\end{equation}
Then for every $\sigma=1,\ldots,d$, it follows from \eqref{eq7.4} and \eqref{eq7.5} that
\begin{equation}\label{eq7.6}
\sum_{n\in\Zz\setminus\{0\}}\int_0^1F_{n;\sigma;\eta}(u)\,\dd u\le\eta\sum_{n\in\Zz\setminus\{0\}}\vert a_{n;1}\vert^2.
\end{equation}
If, further, we let
\begin{equation}\label{eq7.7}
\violator(\sigma;\eta)=\left\{u\in[0,1):\sum_{n\in\Zz\setminus\{0\}}F_{n;\sigma;\eta}(u)\ge\frac{1}{2}\sum_{n\in\Zz\setminus\{0\}}\vert a_{n;1}\vert^2\right\},
\end{equation}
then it follows from \eqref{eq7.5} and \eqref{eq7.6} that
\begin{equation}\label{eq7.8}
\lambda_1(\violator(\sigma;\eta))\le2\eta.
\end{equation}
If $u\in[0,1)\setminus\violator(\sigma;\eta)$, then using \eqref{eq7.5} and \eqref{eq7.7}, we have
\begin{align}\label{eq7.9}
\sum_{\substack{{n\in\Zz\setminus\{0\}}\\{F_{n;\sigma;\eta}(u)=0}}}\vert a_{n;1}\vert^2
&
=\sum_{n\in\Zz\setminus\{0\}}\vert a_{n;1}\vert^2
-\sum_{\substack{{n\in\Zz\setminus\{0\}}\\{F_{n;\sigma;\eta}(u)\ne0}}}\vert a_{n;1}\vert^2
\nonumber
\\
&
=\sum_{n\in\Zz\setminus\{0\}}\vert a_{n;1}\vert^2
-\sum_{n\in\Zz\setminus\{0\}}F_{n;\sigma;\eta}(u)
>\frac{1}{2}\sum_{n\in\Zz\setminus\{0\}}\vert a_{n;1}\vert^2.
\end{align}
Recalling that the condition $F_{n;\sigma;\eta}(u)=0$ implies the inequality \eqref{eq7.3}, it then follows from \eqref{eq7.9} that
\begin{displaymath}
\sum_{\substack{{n\in\Zz\setminus\{0\}}\\{F_{n;\sigma;\eta}(u)=0}}}\vert a_{n;\sigma}-\ee^{-2\pi\ii nu}a_{n;1}\vert^2
\ge\eta^2\sum_{\substack{{n\in\Zz\setminus\{0\}}\\{F_{n;\sigma;\eta}(u)=0}}}\vert a_{n;1}\vert^2
>\frac{\eta^2}{2}\sum_{n\in\Zz\setminus\{0\}}\vert a_{n;1}\vert^2.
\end{displaymath}
We therefore deduce that for every $\sigma=1,\ldots,d$, the inequality
\begin{equation}\label{eq7.10}
\sum_{n\in\Zz\setminus\{0\}}\vert a_{n;\sigma}-\ee^{-2\pi\ii nu}a_{n;1}\vert^2
>\frac{\eta^2}{2}\sum_{n\in\Zz\setminus\{0\}}\vert a_{n;1}\vert^2
\end{equation}
holds for every $u\in[0,1)\setminus\violator(\sigma;\eta)$, where the bound \eqref{eq7.8} holds.

To evaluate the sum
\textcolor{white}{xxxxxxxxxxxxxxxxxxxxxxxxxxxxxx}
\begin{equation}\label{eq7.11}
\sum_{n\in\Zz\setminus\{0\}}\vert a_{n;1}\vert^2
=\sum_{n\in\Zz}\vert a_{n;1}\vert^2-\vert a_{0;1}\vert^2,
\end{equation}
we note that the Parseval formula gives
\begin{equation}\label{eq7.12}
\sum_{n\in\Zz}\vert a_{n;1}\vert^2=\int_0^1\chi_{U_1}^2(x)\,\dd x=\int_0^1\chi_{U_1}(x)\,\dd x=\lambda_1(U_1),
\end{equation}
while by definition we have
\begin{equation}\label{eq7.13}
\vert a_{0;1}\vert^2=\left(\int_0^1\chi_{U_1}(x)\,\dd x\right)^2=(\lambda_1(U_1))^2.
\end{equation}
From \eqref{eq7.11}--\eqref{eq7.13}, we deduce that
\begin{equation}\label{eq7.14}
\sum_{n\in\Zz\setminus\{0\}}\vert a_{n;1}\vert^2=\lambda_1(U_1)(1-\lambda_1(U_1)).
\end{equation}

Combining \eqref{eq7.2}, \eqref{eq7.10} and \eqref{eq7.14}, we now deduce that for every $\sigma=1,\ldots,d$, the inequality
\textcolor{white}{xxxxxxxxxxxxxxxxxxxxxxxxxxxxxx}
\begin{displaymath}
\lambda_1(U_\sigma\bigtriangleup(u_0+U_1))
>\frac{\eta^2}{2}\lambda_1(U_1)(1-\lambda_1(U_1))
\end{displaymath}
holds for every $u\in[0,1)\setminus\violator(\sigma;\eta)$, where the bound \eqref{eq7.8} holds.
Finally, let $\eta=(4d)^{-1}$.
Then it follows from \eqref{eq7.8} that
\begin{displaymath}
\lambda_1\left(\bigcup_{\sigma=1}^d\violator(\sigma;\eta)\right)\le2\eta d=\frac{1}{2}.
\end{displaymath}
This completes the proof of Lemma~\ref{lem61}.

%
%

\section{Kronecker--Weyl polycube translation $3$-manifolds}\label{sec8}

Both Theorems \ref{thm2} and~\ref{thm3} concern finite polycube translation $3$-manifolds that are the cartesian products
of finite polysquare translation surfaces with the unit torus $[0,1)^2$.
These special polycube translation $3$-manifolds all have one direction which is integrable,
and can therefore be considered as degenerate cases in the class of all finite polycube translation $3$-manifolds.

We are currently unable to answer in full generality the question raised in the Open Problem in Section~\ref{sec1}.
However, we are nevertheless able to answer this question in the affirmative for infinitely many finite polycube translation $3$-manifolds
that do not have any direction which can be considered integrable.
The construction of such examples of finite polycube translation $3$-manifolds comes from the notion of \textit{split-covering}.

Before we describe this construction, we consider a simple example.

\begin{example}\label{ex81}
Consider the unit torus $[0,1)^3$ as a basic building block.
Then any half-infinite geodesic in $[0,1)^3$ with a Kronecker direction is uniformly distributed.
The L-solid translation $3$-manifold $\MMM_0$ can be viewed as a $3$-fold split-covering of $[0,1)^3$,
and Theorem~\ref{thm1} states that any half-infinite geodesic in $\MMM_0$ with a Kronecker direction
is uniformly distributed unless it hits a singularity.

Let us now take $\MMM_0$ as a basic building block, and consider next a rather special $4$-fold split-covering of $\MMM_0$
where the $4$ copies of $\MMM_0$ are placed side-by-side in the $x$-direction, as shown in Figure~8.1.

\begin{displaymath}
\begin{array}{c}
\includegraphics[scale=0.8]{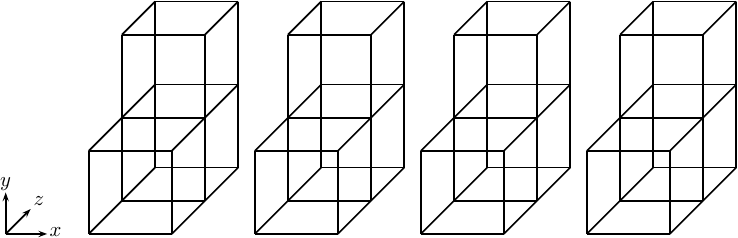}
\vspace{3pt}\\
\mbox{Figure 8.1: $4$ L-solid translation $3$-manifolds}
\end{array}
\end{displaymath}

Clearly if we simply glue the $4$ copies of $\MMM_0$ together, then the resulting polycube translation $3$-manifold $\MMM$
is integrable in the $x$-direction, and can be considered degenerate.
To ensure that the resulting polycube translation $3$-manifold $\MMM$ is not degenerate, we convert some of the $X$-faces into walls,
as shown in the examples in Figure~8.2.

\begin{displaymath}
\begin{array}{c}
\includegraphics[scale=0.8]{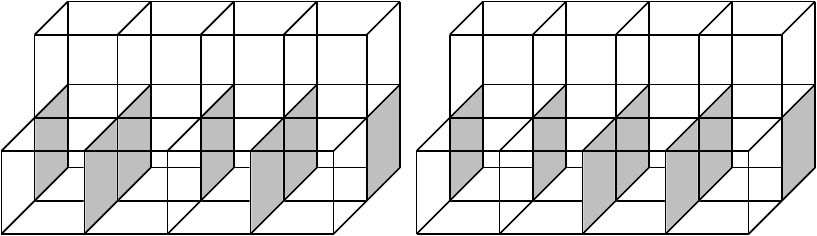}
\vspace{3pt}\\
\mbox{Figure 8.2: non-degenerate $4$-fold split-coverings}
\\
\mbox{of the L-solid translation $3$-manifolds}
\end{array}
\end{displaymath}

We claim that any half-infinite geodesic in either example of $\MMM$ in Figure~8.2 with a Kronecker direction
is uniformly distributed unless it hits a singularity.

To give a brief glimpse of our method and help formulate our result, let us consider the example of $\MMM$ on the right hand side,
and ignore the atomic cubes at the front, as indicated in Figure~8.3.

\begin{displaymath}
\begin{array}{c}
\includegraphics[scale=0.8]{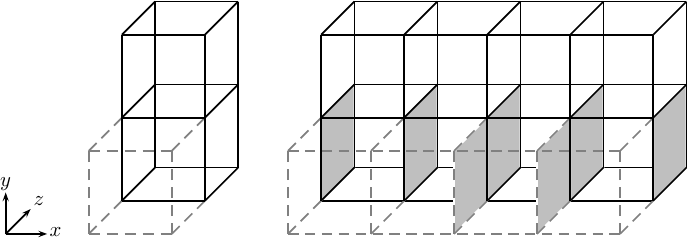}
\vspace{3pt}\\
\mbox{Figure 8.3: ignoring parts of a non-degenerate $4$-fold split-covering}
\\
\mbox{of the L-solid translation $3$-manifolds}
\end{array}
\end{displaymath}

Removing the front atomic cubes altogether, we obtain the collection of atomic cubes as shown in Figure~8.4.
Here we see a $4$-fold split-covering in the $x$-direction with an $X$-street of length~$4$,
while immediately below are $4$ $X$-streets of length~$1$.

\begin{displaymath}
\begin{array}{c}
\includegraphics[scale=0.8]{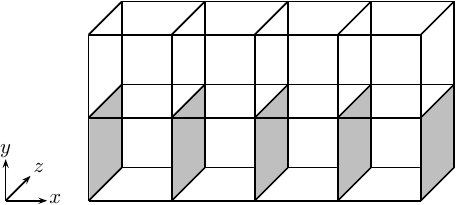}
\vspace{3pt}\\
\mbox{Figure 8.4: a very special $4$-fold split-covering}
\end{array}
\end{displaymath}

We shall show that this is the crucial part of~$\MMM$, and that the remaining atomic cubes of $\MMM$ are totally irrelevant.
\end{example}

\begin{theorem}\label{thm4}
Let $\PPP$ be a finite polysquare translation surface, and let $\MMM_0=[0,1)\times\PPP$ denote the cartesian product of the unit torus $[0,1)$
with~$\PPP$, where the $x$-direction is the direction of the unit torus.
For any fixed positive integer~$s$, let $\MMM$ denote an $s$-fold split-covering of~$\MMM_0$,
where the $s$ copies of $\MMM_0$ are placed side-by-side in the $x$-direction and glued together,
and where some $X$-faces are replaced by walls.

Suppose that $\MMM$ contains an $s\times2\times1$ array of atomic cubes, where the top row of $s$ atomic cubes
gives rise to an $X$-street of length~$s$, and where any $X$-face of any atomic cube in the bottom row is a wall.
Then any half-infinite geodesic in $\MMM$ with a Kronecker direction is uniformly distributed unless it hits a singularity.
\end{theorem}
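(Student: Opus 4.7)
The plan is to adapt the proof of Theorem~\ref{thm3} (Section~\ref{sec6}) to this non-cartesian setting, localizing the entire argument to the $s\times 2\times 1$ array promised by the hypothesis. As always I would proceed by contradiction: assume $T_{\alpha,\beta}$ on the union $\YYY$ of $Y$-faces of $\MMM$ is not ergodic, producing a decomposition $\YYY=\WWW\cup\SSS$ into disjoint $T_{\alpha,\beta}$-invariant measurable subsets, and correspondingly $\MMM=\WWW_3\cup\SSS_3$ for the $\bfv$-flow.

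The first step is to extend Lemma~\ref{lem52}. Because $\MMM$ is a split-covering of $[0,1)\times\PPP$ in the $x$-direction, every face identification in $\MMM$ across an $X$-face is a perpendicular translation shifting only the $x$-coordinate; consequently the $\bfv$-flow on $\MMM$ projects continuously onto the $(1,\beta)$-flow on the polysquare translation surface $\PPP$ in the $yz$-plane, and the fiber-integral $\psi(y,z)=\int_{[0,s)}\chi_{\WWW_3}(x,y,z)\,\dd x$ is invariant under that latter flow. Because $\beta$ is irrational, the Gutkin--Veech theorem gives ergodicity, so $\psi\equiv c$ for some constant $c\in(0,s)$.

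The second step is to work out the discrete dynamics inside the array. Write $Y^{\mathrm{mid}},Y^{\mathrm{top}},Y^{\mathrm{bot}}$ for the middle face (at $y=1$), top face (at $y=2$) and bottom face (at $y=0$) of the array, each parametrized by $(x,z)\in[0,s)\times[0,1)$. Because the top row is an $X$-street of length $s$, the map $T_{\alpha,\beta}\colon Y^{\mathrm{mid}}\to Y^{\mathrm{top}}$ is the \emph{clean} shift $(x,z)\mapsto((x+\alpha)\bmod s,(z+\beta)\bmod 1)$ on the $2$-torus $[0,s)\times[0,1)$, with no splits whatsoever; since $\alpha/s,\beta,1$ are linearly independent over $\Qq$ (a direct consequence of $\bfv$ being Kronecker), this shift is ergodic on the torus. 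By contrast, because every $X$-face of the bottom row is a wall, the map $T_{\alpha,\beta}^{-1}$ on $Y_i^{\mathrm{mid}}$ \emph{splits}: the portion $x\in[i-1,i-1+\alpha)$ exits the array through the left $X$-wall of bottom-row cube $i$ (mapping to somewhere in $\MMM$ whose precise location will not enter the argument), while the complementary portion $x\in[i-1+\alpha,i]$ descends cleanly to $Y_i^{\mathrm{bot}}$. These are the only splittings I will use.

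Third, I would run the splitting method in parallel to Sections~\ref{sec3} and~\ref{sec6}: use the continued-fraction partitions from Section~\ref{sec3} to construct small special rectangles on $Y^{\mathrm{mid}}$ straddling the split points $x=i-1+\alpha$, apply the forward and backward power-chain arguments of Lemma~\ref{lem32}, and combine them with the constant fiber-integral $\psi\equiv c$ to force the restriction $\chi_m$ of $\chi_{\WWW}$ to $Y^{\mathrm{mid}}$ to be essentially constant on large collections of such rectangles. The Fourier-type argument of Section~\ref{sec7} (adapted to the $2$-torus $[0,s)\times[0,1)$ by using Parseval frequencies $n/s$ for $n\in\Zz$ in the $x$-direction) together with the ergodicity of the top-row shift then promotes this essential constancy to genuine constancy of $\chi_m$, contradicting $0<c<s$ and establishing ergodicity of $T_{\alpha,\beta}$. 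Unique ergodicity then follows from the Krylov--Bogoliuboff argument of Section~\ref{sec4} verbatim, noting that the $\bfv$-flow in $\MMM$ still reduces modulo one to the uniformly distributed $\bfv$-flow on the unit torus $[0,1)^3$. The main obstacle I anticipate is justifying rigorously that the rest of $\MMM$ is totally irrelevant: mechanically this should work because the only splittings invoked are the left $X$-walls of the bottom-row cubes (local to the array), the teleport destinations outside the array enter the power-chain estimates only through Lebesgue measures preserved by $T_{\alpha,\beta}$, and the symmetric-difference approximation $\lambda_2(\WWW\bigtriangleup\WWW_1)<\eps$ from Section~\ref{sec3} absorbs any interaction with $\WWW$ outside the array into the error term.
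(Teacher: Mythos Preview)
Your approach diverges from the paper's in a way that leaves a real gap. The paper does \emph{not} project onto $\PPP$ and invoke Gutkin--Veech directly; instead it projects $\YYY\to\YYY_0$ (the $Y$-faces of the building block $\MMM_0=[0,1)\times\PPP$) and invokes Theorem~\ref{thm3} as a black box to conclude that the induced map $T_0:\YYY_0\to\YYY_0$ is ergodic. This yields an \emph{integer} multiplicity $f_\WWW\equiv s_1\in\{1,\ldots,s-1\}$: for almost every $P\in\YYY_0$, exactly $s_1$ of its $s$ preimages lie in $\WWW$. The splitting argument localized to the $s\times2\times1$ array then has a cyclic splitting permutation $\sigma\mapsto\sigma+1$, so the $s$ small rectangles $R_1,\ldots,R_s$ (one per copy) share the same dominant colour, directly contradicting $1\le s_1\le s-1$. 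Your fiber integral $\psi\equiv c$ over the $x$-direction gives only a real constant $c\in(0,s)$, which does not yield this contradiction when $c$ is close to $0$ or $s$. You try to repair this in step~3 via ``ergodicity of the top-row shift'' and a Fourier argument, but the clean shift $Y^{\mathrm{mid}}\to Y^{\mathrm{top}}$ is not a self-map of $Y^{\mathrm{mid}}$, and after one further iterate the flow leaves the array entirely; there is no ergodic torus action on $Y^{\mathrm{mid}}$ by which to promote local monochromaticity of a few rectangles to global constancy of $\chi_m$. The Section~\ref{sec7} machinery needs iterates that act as genuine torus shifts on the relevant cross-sections, and that is unavailable here in the $x$-direction.

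There is also a technical point you have not addressed. In Theorems~\ref{thm1}--\ref{thm3} one may assume $0<\alpha<1$ by swapping the $x$- and $y$-axes, but here the $x$-direction is distinguished as the covering direction and no such swap is possible; likewise, for large $\beta$ the flow from your small rectangles can exit the array through a $Z$-face before reaching the intended wall, destroying the splitting pattern. The paper resolves both issues simultaneously by a magnification trick (``short transportation''): it replaces $\MMM$ by an $\NNNN$-fold refinement with $\NNNN>\max\{\alpha,1+2\beta\}$, so that all the relevant splitting near each wall stays inside the enlarged array, and then proves the analogue of Lemma~\ref{lem12} (Lemma~9.1) in that refined setting.
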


\begin{remark}
Note that in Figure~8.4, we can take the bottom row of atomic cubes and place them at the top instead.
Note that in Figure~8.5, we must ensure that the bottom $Y$-face of any atomic cube in the bottom row is identified with the top $Y$-face
of the corrresponding atomic cube in the top row.

\begin{displaymath}
\begin{array}{c}
\includegraphics[scale=0.8]{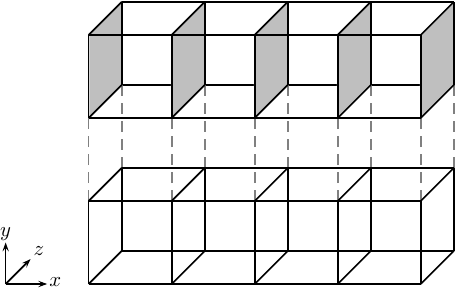}
\vspace{3pt}\\
\mbox{Figure 8.5: an alternative very special $4$-fold split-covering}
\end{array}
\end{displaymath}
\end{remark}

\begin{remark}
In Theorem~\ref{thm4}, we can also allow $\MMM$ to contain an $s\times2\times1$ array of atomic cubes, where the bottom row of $s$ atomic cubes
gives rise to an $X$-street of length~$s$, and where any $X$-face of any atomic cube in the top row is a wall.
\end{remark}

\begin{example}\label{ex82}
Consider a finite polysquare translation surface $\PPP$ of $9$ faces as shown in Figure~8.6,
where identified edges are obtained from each other by perpendicular translation,
and where $2$ of the $y$-perpendicular edges and $2$ of the $z$-perpendicular edges are walls.

\begin{displaymath}
\begin{array}{c}
\includegraphics[scale=0.8]{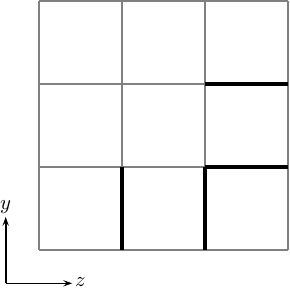}
\vspace{3pt}\\
\mbox{Figure 8.6: a finite polysquare translation surface}
\end{array}
\end{displaymath}

Let $\MMM_0=[0,1)\times\PPP$ denote the polycube translation $3$-manifold obtained from the cartesian product of the unit torus $[0,1)$
with~$\PPP$, as shown in the picture on the left in Figure~8.7.
Corresponding to the $2$ $y$-perpendicular edges of~$\PPP$, the $3$-manifold $\MMM_0$ has $2$ $Y$-faces that are walls.
Corresponding to the $2$ $z$-perpendicular edges of~$\PPP$, the $3$-manifold $\MMM_0$ has $2$ $Z$-faces that are walls.

\begin{displaymath}
\begin{array}{c}
\includegraphics[scale=0.8]{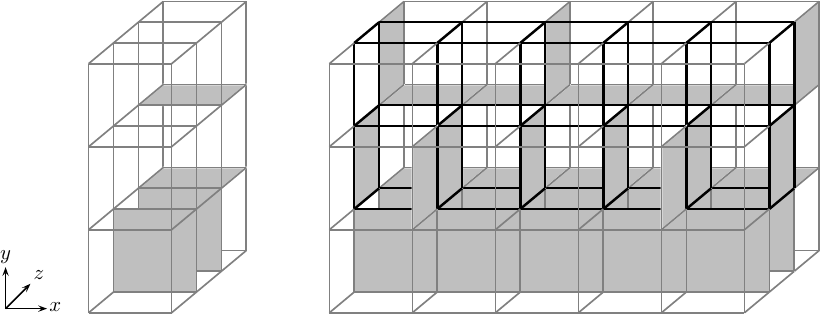}
\vspace{3pt}\\
\mbox{Figure 8.7: a very special $5$-fold split-covering}
\end{array}
\end{displaymath}

Next, we construct a $5$-fold split-covering of $\MMM_0$ with $5$ copies of $\MMM_0$ placed side-by-side in the $x$-direction and glued together,
and where some $X$-faces are replaced by walls, as shown in the picture on the right in Figure~8.7.
It is clear from this picture that the resulting polycube translation $3$-manifold $\MMM$ contains a $5\times2\times1$ array of atomic cubes
of the type described in the statement of Theorem~\ref{thm4}.
Those $X$-faces that are walls and are not in the special $5\times2\times1$ array of atomic cubes are irrelevant.
The $Y$-faces and $Z$-faces present in $\MMM$ correspond precisely to those present in~$\MMM_0$.
\end{example}

We say that a finite polycube translation $3$-manifold $\MMM$ is Kronecker--Weyl
if every half-infinite geodesic in $\MMM$ with a Kronecker direction is uniformly distributed unless it hits a singularity.
Using $s$-fold split-covering and noting the irrelevance of whether many $X$-faces are walls or otherwise,
we see that we can construct exponentially many such polycube translation $3$-manifolds.
On the other hand, we are a long way from a full solution of the Open Problem in Section~\ref{sec1}.

%
%

\section{Proof of Theorem~\ref{thm4}}\label{sec9}

The proof of Theorem~\ref{thm4} goes very much along the lines of that of Theorem~\ref{thm1}.
However, extra ideas are needed at various stages of the argument.

Suppose that the period polycube translation $3$-manifold $\MMM_0$ contains precisely $d$ distinct atomic cubes.
Let $Y^{(1)},\ldots,Y^{(d)}$ denote the bottom $Y$-faces of these atomic cubes, and let
\textcolor{white}{xxxxxxxxxxxxxxxxxxxxxxxxxxxxxx}
\begin{displaymath}
\YYY_0=Y^{(1)}\cup\ldots\cup Y^{(d)}.
\end{displaymath}
Then the $s$-fold split-covering $\MMM$ contains precisely $sd$ distinct atomic cubes, where each $Y$-face $Y^{(\gamma)}$, $\gamma=1,\ldots,d$,
of $\MMM_0$ leads to $s$ distinct $Y$-faces $Y^{(\gamma)}_\sigma$, $\sigma=1,\ldots,s$.
Let
\textcolor{white}{xxxxxxxxxxxxxxxxxxxxxxxxxxxxxx}
\begin{displaymath}
\YYY=\bigcup_{\gamma=1}^d\bigcup_{\sigma=1}^sY^{(\gamma)}_\sigma.
\end{displaymath}

As before, we consider the discretization of the $\bfv$-flow in $\MMM$ relative to the $Y$-faces,
where $\bfv=(\alpha,1,\beta)$ is a Kronecker direction, and let
\begin{displaymath}
T_{\alpha,\beta}:\YYY\to\YYY
\end{displaymath}
denote the relevant discrete transformation defined by consecutive hitting points.
We need to show that this area-preserving map is ergodic on~$\YYY$.
Suppose, on the contrary, that it is not ergodic.
Then there exist two disjoint $T_{\alpha,\beta}$-invariant subsets $\WWW$ and $\SSS$ of $\YYY$
such that $\YYY=\WWW\cup\SSS$ and
\begin{equation}\label{eq9.1}
0<\lambda_2(\WWW)\le\lambda_2(\SSS)<sd,
\end{equation}
where $\lambda_2$ denotes the $2$-dimensional Lebesgue measure.
We show that this leads to a contradiction.

The natural projection of $\YYY$ to $\YYY_0$ can be represented in the form
\begin{equation}\label{eq9.2}
\YYY\to\YYY_0:(x,y,z)\mapsto(\{x\},y,z),
\end{equation}
indicating that, for every $\gamma=1,\ldots,d$, the images of the $Y$-faces $Y^{(\gamma)}_\sigma$, $\sigma=1,\ldots,s$,
coincide and are all equal to~$Y^{(\gamma)}$,
and the image of each point in $Y^{(\gamma)}_\sigma$ is in the same relative position in~$Y^{(\gamma)}$.
Thus for every point $P\in\YYY_0$, there are precisely $s$ distinct points $P_1,\ldots,P_s\in\YYY$
which have projection image~$P$.
Let
\begin{displaymath}
f_\WWW(P)=\vert\{P_1,\ldots,P_s\}\cap\WWW\vert
\quad\mbox{and}\quad
f_\SSS(P)=\vert\{P_1,\ldots,P_s\}\cap\SSS\vert
\end{displaymath}
denote the number of these $s$ points that fall into $\WWW$ and $\SSS$ respectively.
They are non-negative integer valued functions defined on $\YYY_0$ that satisfy the condition
\begin{equation}\label{eq9.3}
f_\WWW(P)+f_\SSS(P)=s
\quad\mbox{for almost all $P\in\YYY_0$}.
\end{equation}
We also consider the corresponding projection of
\begin{equation}\label{eq9.4}
T_{\alpha,\beta}:\YYY\to\YYY
\quad\mbox{to}\quad
T_0=T_{0,\alpha,\beta}:\YYY_0\to\YYY_0,
\end{equation}
which is simply the discretization of the $\bfv$-flow in $\MMM_0$ relative to the $Y$-faces.

Theorem~\ref{thm3} establishes the ergodicity of~$T_0$.
The Birkhoff ergodic theorem then guarantees that both $f_\WWW$ and $f_\SSS$ are constant integer valued functions.
It then follows from \eqref{eq9.1} and \eqref{eq9.3} that there exist two integers $s_1$ and $s_2$ satisfying
\begin{equation}\label{eq9.5}
1\le s_1,s_2\le s-1
\quad\mbox{and}\quad
s_1+s_2=s
\end{equation}
such that
\textcolor{white}{xxxxxxxxxxxxxxxxxxxxxxxxxxxxxx}
\begin{equation}\label{eq9.6}
f_\WWW(P)=s_1
\quad\mbox{and}\quad
f_\SSS(P)=s_2
\quad\mbox{for almost all $P\in\YYY_0$}.
\end{equation}
We then derive a contradiction by establishing a suitable analogue of Lemma~\ref{lem12}.

For a Kronecker direction $\bfv=(\alpha,1,\beta)$, where $\alpha$, $\beta$ and $\alpha/\beta$ are all irrational,
we can clearly assume that $\alpha>0$ and $\beta>0$.
In the simple case of Theorem~\ref{thm1} and Lemma~\ref{lem12}, the parameter $\beta$ is concerned with a direction
which is considered to be integrable, so its value causes no difficulty.
Furthermore, since we can interchange the other two directions, we can assume that $0<\alpha<1$.

Here, the parameter $\beta$ is no longer concerned with a direction which is considered to be integrable,
so its value can potentially cause difficulties.
Meanwhile, the value of the parameter $\alpha$ can also cause some inconvenience.
To formulate and establish a suitable analogue of Lemma~\ref{lem12}, we need to first understand these difficulties.

\begin{remarka}
Assume that the $z$-direction is integrable, so that the parameter $\beta$ does not cause any difficulties.
Figure~9.1 shows the singularities of the $\bfv$-flow, in the special case when $0<\alpha<1$, in a $4\times2\times1$ array of atomic cubes
of the type described in the statement of Theorem~\ref{thm4} in connection with a $4$-fold split-covering, with the $z$-direction suppressed.
The dashed line segment in each bottom atomic cube indicates that a line segment in the $z$-direction on the bottom $Y$-face
of the atomic cube is taken by the flow to the singular edge at the top of the wall that is the right $X$-face of the atomic cube.

\begin{displaymath}
\begin{array}{c}
\includegraphics[scale=0.8]{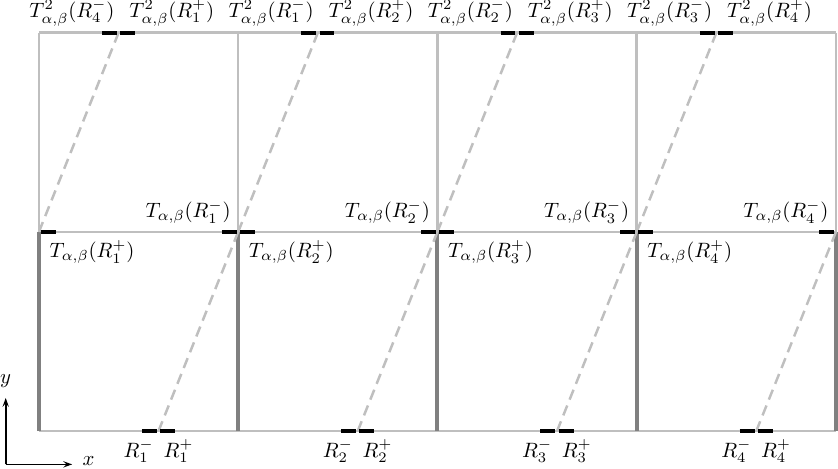}
\vspace{3pt}\\
\mbox{Figure 9.1: the case when $0<\alpha<1$}
\end{array}
\end{displaymath}

The situation is a bit more complicated when $\alpha>1$, as illustrated in Figure~9.2.

\begin{displaymath}
\begin{array}{c}
\includegraphics[scale=0.8]{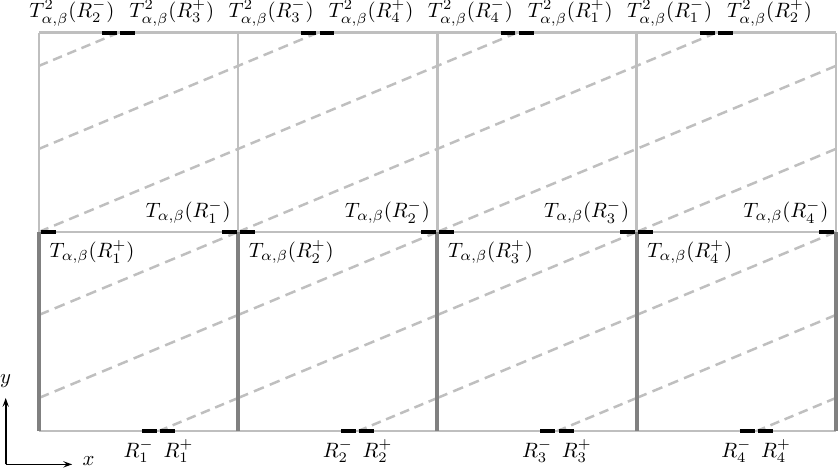}
\vspace{3pt}\\
\mbox{Figure 9.2: the case when $2<\alpha<3$}
\end{array}
\end{displaymath}

In each case, we start with $4$ small rectangles
\begin{displaymath}
R_1=R_1^-\cup R_1^+,
\quad
R_2=R_2^-\cup R_2^+,
\quad
R_3=R_3^-\cup R_3^+,
\quad
R_4=R_4^-\cup R_4^+.
\end{displaymath}
Each is on the bottom $Y$-face of a bottom atomic cube, and they are in the same relative position within their own $Y$-faces.
Each is split under $T_{\alpha,\beta}$ by the singular edge at the top of the wall that forms the right $X$-face of its own atomic cube.

Although we draw the same conclusion, for instance, that the image $T_{\alpha,\beta}^2(R_1^-)$ of the left half $R_1^-$ of $R_1$
and the image $T_{\alpha,\beta}^2(R_2^+)$ of the right half $R_2^+$ of $R_2$ together essentially form a small rectangle on some $Y$-face,
the identity of this $Y$-face is not so straightforward as in the case $0<\alpha<1$ earlier.

This is not a major issue, but we shall fix this at the same time as we deal with the real issue concerning the value of~$\beta$.
The idea is that of \textit{short transportation}, illustrated in Figure~9.3, where we keep our attention
on the flow much closer to the singularities.

\begin{displaymath}
\begin{array}{c}
\includegraphics[scale=0.8]{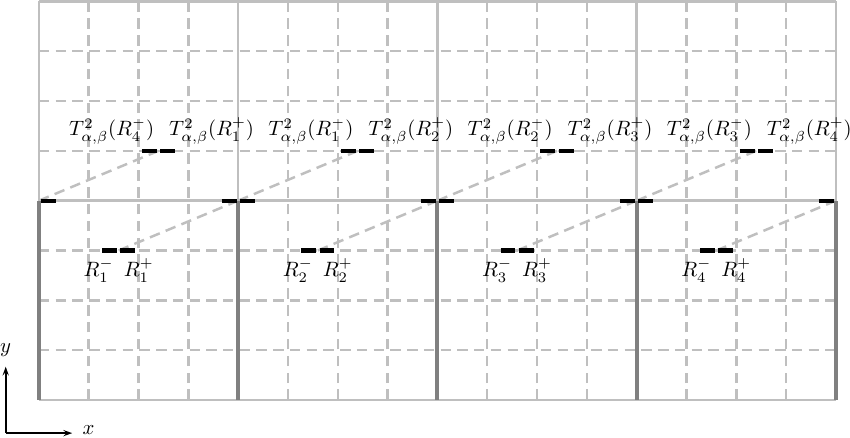}
\vspace{3pt}\\
\mbox{Figure 9.3: fixing the problem with the value of $\alpha$}
\end{array}
\end{displaymath}
\end{remarka}

\begin{remarkb}
The parameter $\beta$ deals with the part of the flow in the $z$-direction.
Figure~9.1 shows the singularities of the $\bfv$-flow, in the special case when $\beta>0$ is small, in a $4\times2\times1$ array of atomic cubes
of the type described in the statement of Theorem~\ref{thm4} in connection with a $4$-fold split-covering.
The dashed line segment in each bottom atomic cube indicates that a line segment in the $z$-direction on the bottom $Y$-face
of the atomic cube is taken by the flow to the singular edge at the top of the wall that is the right $X$-face of the atomic cube.
As the value of $\beta$ is small, we can find suitable small rectangles such that their images under $T_{\alpha,\beta}$ and $T_{\alpha,\beta}^2$
stay within this $4\times2\times1$ array of atomic cubes.

\begin{displaymath}
\begin{array}{c}
\includegraphics[scale=0.8]{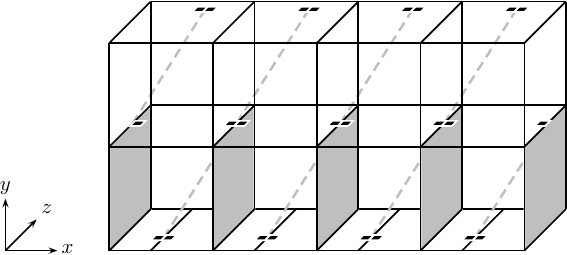}
\vspace{3pt}\\
\mbox{Figure 9.4: small values of $\beta$}
\end{array}
\end{displaymath}

The situation is different if $\beta>0$ is not so small, as there is the possibility that the flow from a small rectangle
crosses the back $Z$-face of the bottom atomic cube before reaching the wall that is the right $X$-face of the atomic cube,
as illustrated in Figure~9.5.
In this case, the images of the small rectangle under $T_{\alpha,\beta}$ and $T_{\alpha,\beta}^2$
may no longer stay within this $4\times2\times1$ array of atomic cubes.
What then happens depends on whether some $X$-faces outside this array are walls or not.

\begin{displaymath}
\begin{array}{c}
\includegraphics[scale=0.8]{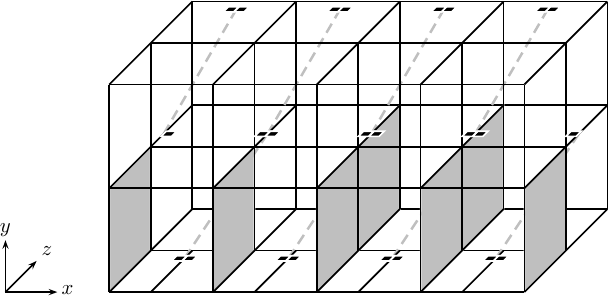}
\vspace{3pt}\\
\mbox{Figure 9.5: larger values of $\beta$}
\end{array}
\end{displaymath}

Figure~9.6 shows the view from the top of what may happen, with small $\alpha>0$,
and we do not have the appropriate composition of the image rectangles.
For instance, the images $T_{\alpha,\beta}^2(R_1^-)$ and $T_{\alpha,\beta}^2(R_1^+)$ now form a small rectangle.
It seems complicated to eradicate the situation if $\beta>1$.

\begin{displaymath}
\begin{array}{c}
\includegraphics[scale=0.8]{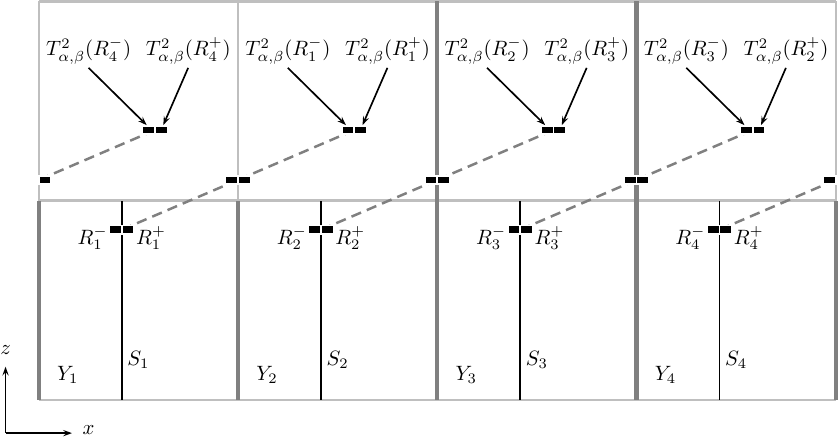}
\vspace{3pt}\\
\mbox{Figure 9.6: larger values of $\beta$}
\end{array}
\end{displaymath}
\end{remarkb}

Figure~9.3 gives a hint on a possible remedy of the situation for the value of the parameter $\alpha$ if the value of the parameter $\beta$
does not cause any serious difficulties.
It turns out that essentially the same idea can also deal with any difficulties arising from the value of the parameter~$\beta$.

Short transportation means concentrating on what happens much closer to the singularities.
However, to do so, we need to move into the interior of the atomic cubes and not just the $Y$-faces.
Thus we move outside the set $\YYY$ and have to abandon the discretization of the $\bfv$-flow in $\MMM$ relative to the $Y$-faces.

A simpler approach is to magnify the polycube translation $3$-manifold $\MMM$ instead.
More precisely, given any Kronecker direction $\bfv=(\alpha,1,\beta)$ with $\alpha>0$ and $\beta>0$, let $\NNNN$ be an integer satisfying
\begin{equation}\label{eq9.7}
\NNNN>\max\{\alpha,1+2\beta\}.
\end{equation}
We now magnify the polycube translation $3$-manifold $\MMM$ by a factor $\NNNN$ in each direction, so that any atomic cube in $\MMM$
is magnified to volume~$\NNNN^3$, and any face in $\MMM$ is magnified to area~$\NNNN^2$.
We then split each magnified atomic cube into $\NNNN^3$ atomic cubes of unit volume,
so that each magnified face of $\MMM$ is split into $\NNNN^2$ faces of unit area.
We shall abuse notation and denote the resulting polycube translation $3$-manifold also by~$\MMM$.
This new $\MMM$ has $\NNNN^3sd$ atomic cubes, with $\NNNN^3sd$ distinct $Y$-faces that make up a new set~$\YYY$.
We can perform a similar exercise to obtain a new polycube translation $3$-manifold $\MMM_0$ with $\NNNN^3d$ atomic cubes,
with $\NNNN^3d$ distinct $Y$-faces that make up a corresponding new set~$\YYY_0$.
Clearly the new $\MMM$ is an $s$-fold split-covering of the new~$\MMM_0$.

We can consider the corresponding new mappings given in \eqref{eq9.4}, and repeat our previous argument.
Then the inequalities \eqref{eq9.1} are replaced by the inequalities
\begin{displaymath}
0<\lambda_2(\WWW)\le\lambda_2(\SSS)<\NNNN^3sd.
\end{displaymath}
Crucially, the assertions \eqref{eq9.5} and \eqref{eq9.6} remain valid.

The special $s\times2\times1$ array of atomic cubes is now replaced by an $s\NNNN\times2\NNNN\times\NNNN$ array of atomic cubes.
Suppose that the coordinates of the points are in
\begin{equation}\label{eq9.8}
[0,s\NNNN]\times[0,2\NNNN]\times[0,\NNNN].
\end{equation}
Then the walls correspond precisely to the collection of points
\begin{displaymath}
\{(x,y,z):x\in\{0,\NNNN,\ldots,s\NNNN\}\mbox{ and }y,z\in[0,\NNNN]\}.
\end{displaymath}
For the special case when $s=4$ and $\NNNN=4$, this array is shown in Figure~9.3 with the $z$-direction suppressed.

It is also convenient to write
\begin{equation}\label{eq9.9}
\BBBB_\sigma=[(\sigma-1)\NNNN,\sigma\NNNN]\times[0,2\NNNN]\times[0,\NNNN],
\quad
\sigma=1,\ldots,s.
\end{equation}
Their union gives the $s$-fold split-covering of $[0,\NNNN]\times[0,2\NNNN]\times[0,\NNNN]\subset\MMM_0$
that corresponds to $\MMM$ being the $s$-fold split-covering of~$\MMM_0$.

We are ready to formulate a suitable analogue of Lemma~\ref{lem12}.
To do so, we need to find small special rectangles $R_1,\ldots,R_s$ on suitable Y-faces $Y_1,\ldots,Y_s$
as well as small special rectangles $R^*_1,\ldots,R^*_s$ on suitable Y-faces $Y^*_1,\ldots,Y^*_s$.

\begin{remark}
We comment in advance that while in Lemma~\ref{lem12}, we have
\begin{displaymath}
\YYY=Y_1\cup Y_2\cup Y_3=Y^*_1\cup Y^*_2\cup Y^*_3,
\end{displaymath}
here we have
\textcolor{white}{xxxxxxxxxxxxxxxxxxxxxxxxxxxxxx}
\begin{equation}\label{eq9.10}
Y_1\cup\ldots\cup Y_s\ne Y^*_1\cup\ldots\cup Y^*_s,
\end{equation}
and both sides of \eqref{eq9.10} are proper subsets of~$\YYY$.
\end{remark}

Let us now choose a suitable $Y$-face $Y_1$ in~$\BBBB_1$.
We shall make use of the coordinate system given by \eqref{eq9.8} and \eqref{eq9.9}.

Figure~9.3 suggests that the $y$-coordinate of any point on $Y_1$ should be equal to $\NNNN-1$,
making it a $y$-distance $1$ below the top of the vertical wall separating $\BBBB_1$ and~$\BBBB_2$.
Figure~9.3 also suggests that $Y_1$ should contain a line segment $S_1$ in the $z$-direction
of length $1$ and which is mapped by the $\bfv$-flow to the top edge of this wall.
Clearly the $x$-coordinate of any point of $S_1$ must be equal to $\NNNN-\alpha\in(0,\NNNN)$.
This explains the requirement that $\NNNN>\alpha$ in \eqref{eq9.7}.

Now let $\bfp_1=(x_1,y_1,z_1)$ be the coordinates of the bottom left vertex of~$Y_1$.
The above argument shows that $x_1=[\NNNN-\alpha]$ and $y_1=\NNNN-1$.
We shall take
\begin{equation}\label{eq9.11}
z_1=0,
\end{equation}
to be explained later.

We next determine the $Y$-faces $Y_2,\ldots,Y_s$ in $\BBBB_2,\ldots,\BBBB_s$ and containing the line segments $S_2,\ldots,S_s$
in the $z$-direction respectively, so that the images of $Y_1,\ldots,Y_s$ under the modulo~$\NNNN$ analog of the natural projection \eqref{eq9.2} coincide,
as do the images of $S_1,\ldots,S_s$.

The argument thus far is summarized in the bottom half of Figure~9.7, where the $y$-direction is suppressed.

\begin{displaymath}
\begin{array}{c}
\includegraphics[scale=0.8]{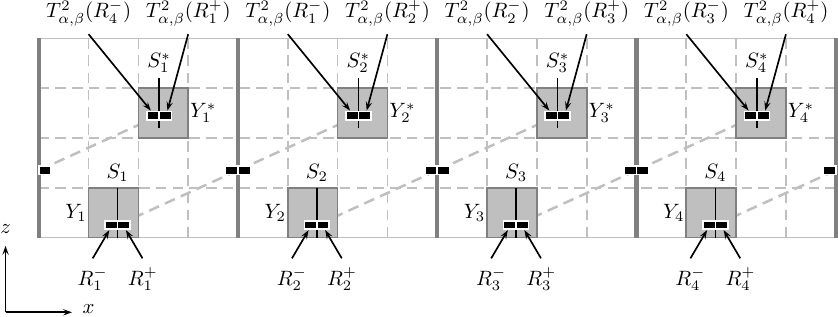}
\vspace{3pt}\\
\mbox{Figure 9.7: a process equivalent to short transportation}
\end{array}
\end{displaymath}

Consider next the image $S^*_2=S_1+(2\alpha,2,2\beta)$ of~$S_1$ that corresponds to two applications of the map $T_{\alpha,\beta}$
if we ignore the singularity at the top of the wall.

The $y$-coordinate of any point on $S^*_2$ is clearly $\NNNN+1$.
The $x$-coordinate is equal to $\NNNN+\alpha\in(\NNNN,2\NNNN)$.
The $z$-coordinate lies in the interval $(2\beta,1+2\beta)\subset(0,\NNNN)$.
Hence $S^*_2$ lies on two adjoining $Y$-faces in~$\BBBB_2$, with total length~$1$, and explains the choice \eqref{eq9.11}
as well as the condition $\NNNN>1+2\beta$ in \eqref{eq9.7}.

Now let $Y^*_2$ denote the $Y$-face that contains the longer half of the line segment~$S^*_2$,
with bottom left vertex $\bfp^*_2=(x^*_2,y^*_2,z^*_2)$.
Then $x^*_1=[\NNNN+\alpha]+1$, $y^*_2=\NNNN+1$ and
\begin{displaymath}
z^*_2=\left\{\begin{array}{ll}
[2\beta],&\mbox{if $\{2\beta\}<1/2$},\\
1+[2\beta],&\mbox{if $\{2\beta\}>1/2$}.
\end{array}\right.
\end{displaymath}

Let $S^*_2(0)=Y^*_2\cap S^*_2$ denote this longer part of $S^*_2$ that lies on the $Y$-face~$Y^*_2$,
and let $S_1(0)=S^*_2(0)-(2\alpha,2,2\beta)$ be the image of $S^*_2(0)$ under two applications
of the inverse map $T^{-1}_{\alpha,\beta}$ if we ignore the singularity at the top of the wall.
Then $S_1(0)$ is part of~$S_1$, and both $S_1(0)$ and $S^*_2(0)$ have length greater than~$1/2$.

We next determine the $Y$-faces $Y^*_1,Y^*_3,\ldots,Y^*_s$ in $\BBBB_1,\BBBB_3\ldots,\BBBB_s$
and containing the line segments $S^*_1(0),S^*_3(0),\ldots,S^*_s(0)$
in the $z$-direction respectively, so that the images of $Y^*_1,\ldots,Y^*_s$
under the modulo~$\NNNN$ analog of the natural projection \eqref{eq9.2} coincide,
as do the images of $S^*_1(0),\ldots,S^*_s(0)$.

This latter part of the argument is summarized in Figure~9.7, where the $y$-direction is suppressed.

\begin{lemma}\label{lem91}
There exist small special rectangles $R_1,\ldots,R_s,R^*_1,\ldots,R^*_s$ on the faces $Y_1,\ldots,Y_s,Y^*_1,\ldots,Y^*_s$ of~$\YYY$
such that the following conditions are satisfied:

\emph{(i)}
For every $\sigma=1,\ldots,s$, the set $R_\sigma$ satisfies
\begin{displaymath}
\frac{\lambda_2(R_\sigma\cap\WWW)}{\lambda_2(R_\sigma)}>\frac{99}{100}
\quad\mbox{or}\quad
\frac{\lambda_2(R_\sigma\cap\SSS)}{\lambda_2(R_\sigma)}>\frac{99}{100},
\end{displaymath}
and the set $R^*_\sigma$ satisfies
\begin{displaymath}
\frac{\lambda_2(R^*_\sigma\cap\WWW)}{\lambda_2(R^*_\sigma)}>\frac{99}{100}
\quad\mbox{or}\quad
\frac{\lambda_2(R^*_\sigma\cap\SSS)}{\lambda_2(R^*_\sigma)}>\frac{99}{100}.
\end{displaymath}

\emph{(ii)}
The images on $\MMM_0$ of $R_1,\ldots,R_s$ under the modulo~$\NNNN$ analog of the projection \eqref{eq9.2} coincide.

\emph{(iii)}
For every $\sigma=1,\ldots,s$, the set $R^*_\sigma$ satisfies
\begin{displaymath}
R^*_\sigma=T^2_{\alpha,\beta}(R_{\sigma-1}^-)\cup T^2_{\alpha,\beta}(R_\sigma^+),
\end{displaymath}
with the convention that $R_0^-=R_s^-$.
\end{lemma}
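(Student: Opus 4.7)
The plan is to adapt the proof of Lemma~\ref{lem12} from Section~\ref{sec3}. Instead of the three rectangles there, we now track $2s$ rectangles, but condition~(ii) forces $R_1,\ldots,R_s$ to share a common projection to $\MMM_0$, so a single pair of continued-fraction denominators $(q_k,q'_h)$ and a single position index $j$ parametrize all of them simultaneously.

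First, I would approximate $\WWW$ by a finite union $\WWW_1$ of axis-aligned rectangles with $\lambda_2(\WWW\bigtriangleup\WWW_1)<\eps$. Then, for each $\sigma=1,\ldots,s$, I introduce the small special rectangle $R_{\sigma,k,h}(\alpha;-1;\beta;j-2)$ inside the narrow strip thickening of $S_\sigma$ on $Y_\sigma$, using $q_k=q_k(\alpha)$, $q'_h=q'_h(\beta)$ with $q'_{h+1}$ substantially larger than $q_{k+1}$. Using the same indices $(k,h,j)$ for every $\sigma$ ensures condition~(ii) automatically. The candidates for $R^*_\sigma$ are the corresponding $R_{\sigma,k,h}(\alpha;1;\beta;j)$ on the face $Y^*_\sigma$.

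Next, mimicking Section~\ref{sec3}, I would form the forward $T_{\alpha,\beta}$-power chains and backward $T_{\alpha,\beta}^{-1}$-power chains of these rectangles, and call the index $j$ \emph{jointly defective} if for some $\sigma\in\{1,\ldots,s\}$ either the forward chain of $R_{\sigma,k,h}(\alpha;1;\beta;j)$ or the backward chain of $R_{\sigma,k,h}(\alpha;-1;\beta;j-2)$ is defective in the sense of Section~\ref{sec3}. The counting bounds \eqref{eq3.17} and \eqref{eq3.19} scale by a factor of at most $2s$, so an analogue of the proportion \eqref{eq3.21} remains arbitrarily small once $\eps$ is small and $q_{k+1}$, $q'_{h+1}$ are sufficiently large, with a condition of the form $q'_{h+1}\ge c_2(s)q_{k+1}$. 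The joint analogue of Lemma~\ref{lem32} then produces an index $j_0$ for which none of the $2s$ relevant chains is defective. The dichotomy argument \eqref{eq3.23}--\eqref{eq3.26} delivers, for each $\sigma$, that both $R_\sigma$ and $R^*_\sigma$ lie, up to relative measure $\delta$, in either $\WWW_1$ or its complement; choosing $\eps<1/810000$ and $\delta=9\eps^{1/2}$ as in Section~\ref{sec3} converts this into condition~(i) with the $99/100$ threshold.

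Condition~(iii) is built into the construction by the short-transportation analysis preceding the lemma: the magnification condition $\NNNN>\max\{\alpha,1+2\beta\}$ ensures that two applications of $T_{\alpha,\beta}$ produce no secondary splittings in either $x$ or $z$ within the relevant block, so $T_{\alpha,\beta}^2(R_{\sigma-1}^-)$ and $T_{\alpha,\beta}^2(R_\sigma^+)$ meet along the image of the singular edge at the top of the wall between $\BBBB_{\sigma-1}$ and $\BBBB_\sigma$ and together fill out $R^*_\sigma$ on $Y^*_\sigma$ up to a set of measure zero; the cyclic convention $R_0^-=R_s^-$ reflects the wrap-around identification of walls in the $s$-fold split-covering. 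The main obstacle I anticipate is the $z$-direction truncation noted before the lemma, where $S^*_\sigma$ spans two adjoining $Y$-faces and only its longer half $S^*_\sigma(0)$ lies on $Y^*_\sigma$; this forces us to work with small rectangles of slightly reduced $z$-extent, but the length of $S^*_\sigma(0)$ is bounded below by $1/2$, so the counting bounds and Lebesgue-density arguments survive with only a harmless constant inflation.
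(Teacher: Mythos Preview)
Your proposal is correct and follows essentially the same approach as the paper. The paper's own proof sketch emphasizes one point slightly differently: rather than phrasing the fix as working with rectangles of ``reduced $z$-extent,'' it restricts the index $j$ in the analogue of Lemma~\ref{lem32} to \emph{special} values for which the first chain terms actually intersect $S_\sigma(0)$ and $S^*_\sigma(0)$; since these segments have length exceeding $1/2$, the $2$-distance theorem guarantees a positive proportion of special $j$ in the range $q_{k+1}+1\le j\le q'_{h+1}-q_{k+1}$, which is exactly the harmless constant-factor loss you anticipated.
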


As in our earlier discussion concerning Theorem~\ref{thm1}, the dominant colours in the rectangles
$R_1,\ldots,R_s$ is the same colour $\CCC$ which is precisely one of $\WWW$ and~$\SSS$.
In view of (ii), this clearly contradicts \eqref{eq9.5} and \eqref{eq9.6},
and establishes the ergodicity of $T_{\alpha,\beta}$.

The proof of Lemma~\ref{lem91} goes along similar lines to the proof of Lemma~\ref{lem12},
apart from some modification of Lemma~\ref{lem32}.
There, in order to find non-defective $T_{\alpha,\beta}$-power chains \eqref{eq3.13}
and non-defective $T^{-1}_{\alpha,\beta}$-power chains \eqref{eq3.22},
we look for an integer $j$ in the range $q_{k+1}+1\le j\le q'_{h+1}-q_{k+1}$.
In that particular situation where the $z$-direction in integrable, every integer $j$
in this range is a candidate.

Here we have to be more careful, and consider only those values of $j$ such that
the first term in the $T_{\alpha,\beta}$-power chain \eqref{eq3.13} has non-empty intersection with the line segment $S^*_\sigma(0)$,
and the first term in the $T^{-1}_{\alpha,\beta}$-power chain \eqref{eq3.22} has non-empty intersection with the line segment $S_\sigma(0)$.
Let us refer to these values of $j$ as \textit{special}.
Since the line segments $S_\sigma(0)$ and $S^*_\sigma(0)$ have length greater than~$1/2$,
it follows from the $2$-distance theorem that the proportion of special values of $j$ in the prescribed range
is greater than a positive constant dependent only on the parameter~$h$.
The argument in the proof of Lemma~\ref{lem12} can be adapted easily to accommodate this loss of a factor.

%
%

\end{document}